\begin{document}

\newcommand{\h}[2]{\ensuremath{H ^{#1} _{#2}}}
\newcommand{\Hdot}{\ensuremath{\dot{H}}}
\newcommand{\hdot}[2]{\ensuremath{\dot{H} ^{#1} _{#2}}}
\newcommand{\vdot}{\ensuremath{\dot{v}}}
\newcommand{\Udot}{\ensuremath{\dot{U}}}
\newcommand{\Uone}{\ensuremath{^{(1)}U}}
\newcommand{\Utwo}{\ensuremath{^{(2)}U}}
\newcommand{\Uthree}{\ensuremath{^{(3)}U}}
\newcommand{\Hone}{\ensuremath{^{(1)}H}}
\newcommand{\Htwo}{\ensuremath{^{(2)}H}}
\newcommand{\Hthree}{\ensuremath{^{(3)}H}}
\newcommand{\hone}[2][2]{\ensuremath{^{(1)}H^{#1} _{#2}}}
\newcommand{\htwo}[2][2]{\ensuremath{^{(2)}H^{#1} _{#2}}}
\newcommand{\hthree}[2][2]{\ensuremath{^{(3)}H^{#1} _{#2}}}
\newcommand{\vone}{\ensuremath{^{(1)}v}}
\newcommand{\vtwo}{\ensuremath{^{(2)}v}}
\newcommand{\vthree}{\ensuremath{^{(3)}v}}
\newcommand{\vbar}{\ensuremath{\bar{v}}}
\newcommand{\deet}{\ensuremath{\partial _t}}
\newcommand{\deei}{\ensuremath{\partial _i}}
\newcommand{\deel}{\ensuremath{\partial _l}}
\newcommand{\deek}{\ensuremath{\partial _k}}
\newcommand{\deem}{\ensuremath{\partial _m}}
\newcommand{\deej}{\ensuremath{\partial _j}}
\newcommand{\deep}{\ensuremath{\partial _p}}
\newcommand{\deer}{\ensuremath{\partial _r}}
\newcommand{\deen}{\ensuremath{\partial _n}}
\newcommand{\deerho}{\ensuremath{\partial _\rho}}
\newcommand{\deetau}{\ensuremath{\partial _\tau}}
\newcommand{\omegatwiddle}{\ensuremath{\tilde{\Omega}}}
\newcommand{\Stwiddle}{\ensuremath{\tilde{S}}}
\newcommand{\norm}[1]{\ensuremath{\| #1 \|}}
\newcommand{\ltwonorm}[1]{\ensuremath{\| #1 \| _{L^2(\mathbf{R}^3)}}}
\newcommand{\linfnorm}[1]{\ensuremath{\| #1 \| _{L^\infty}}}
\newcommand{\etalinfnorm}[1]{\ensuremath{\| #1 \| _{L^\infty(\eta\geq 0)}}}
\newcommand{\visc}{\ensuremath{\nu}}
\newcommand{\innerprod}[2]{\ensuremath{\langle #1 ,#2 \rangle}}
\newcommand{\rthreeip}[2]{\ensuremath{\langle #1 ,#2 \rangle _{\mathbf{R}^3}}}
\newcommand{\ltwoip}[2]{\ensuremath{\langle #1 ,#2 \rangle _{L^2(\mathbf{R}^3)}}}
\newcommand{\E}[2]{\ensuremath{E_{#1,#2}}}
\newcommand{\Etwiddle}[2]{\ensuremath{\tilde{E}_{#1,#2}}}
\newcommand{\stwiddleoops}[2]{\ensuremath{\tilde{S}^{#1} \Upsilon ^{#2}}}
\newcommand{\soops}[2]{\ensuremath{S^{#1} \Upsilon ^{#2}}}
\newcommand{\ahat}[2]{\ensuremath{\hat{A} ^{#1} _{#2}}}
\newcommand{\bhat}[2]{\ensuremath{\hat{B} ^{#1} _{#2}}}
\newcommand{\bee}[2]{\ensuremath{B ^{#1} _{#2}}}
\newcommand{\grad}{\ensuremath{\nabla}}
\newcommand{\ctwot}{\ensuremath{\langle t \rangle}}
\newcommand{\laplacian}{\ensuremath{\triangle}}
\newcommand{\tr}{\textrm{tr }}
\newcommand{\vtwiddle}{\ensuremath{\tilde{v}}}
\newcommand{\Htwiddle}{\ensuremath{\tilde{H}}}
\newcommand{\ftwiddle}{\ensuremath{\tilde{f}}}
\newcommand{\gtwiddle}{\ensuremath{\tilde{g}}}
\newcommand{\Utwiddle}{\ensuremath{\tilde{U}}}
\newcommand{\yconst}{\ensuremath{\varrho}}
\newcommand{\Pee}[1]{\ensuremath{\textsf{P}_{#1}}}
\newcommand{\eigensp}[1]{\aleph _{#1}(\omega)}
\newcommand{\sproj}[1]{\ensuremath{\mathbb{P}_{#1}}}
\newcommand{\ev}[1]{\lambda _{#1}}
\newcommand{\Chi}[2]{\ensuremath{\mathcal{X}_{#1,#2}(\dot{U})}}
\newcommand{\projind}{\ensuremath{\iota}}
\newcommand{\rthree}{\ensuremath{{\mathbf{R}^3}}}
\newcommand{\thesistitle}{Global Existence with Small Initial Data for Three-Dimensional Incompressible Isotropic Viscoelastic Materials}
\newcommand{\name}{Paul Kessenich}

\newtheorem{theorem}{Theorem}[section]
\newtheorem{lemma}{Lemma}[section]
\newtheorem{corollary}{Corollary}[section]

\begin{center}
{\Large \bf \thesistitle}

\normalsize

\vspace{.2in}

{\large PAUL KESSENICH}

\textit{University of Michigan}

\vspace{.2in}

{\bf Abstract}

\begin{quote}

Global existence for a system of nonlinear partial differential equations (PDE) modeling an isotropic incompressible viscoelastic material is proved.  The structure of the PDE is derived through constitutive assumptions on the material.  Restriction on the size of the initial displacement and velocity for the model is specified independent of the size of the viscosity of the material.  The proof of global existence combines use of vector fields, local energy decay estimates, generalized Sobolev inequalities, and hyperbolic energy estimates. 
\end{quote}
\end{center}

\newpage

\section{Introduction}

This paper is concerned with the Cauchy problem for three dimensional isotropic viscoelasticity.  The equations for the motion of the viscoelastic material are derived from constitutive assumptions.  The resulting system of equations is a hybrid between the equations for a viscous Newtonian fluid  and a Cauchy elastic material.  These equations can be viewed as a quasilinear symmetric hyperbolic system with a parabolic perturbation of a certain form arising from the viscosity.  Studying the system in this way, this article presents a result concerning the global stability of the motion of this material.

The main theorem states that the system has global solutions in time given sufficiently small initial displacement and velocity, and furthermore, the size of these initial data are independent of the magnitude of the viscosity.  The methods used for this result include generalized hyperbolic energy estimates, generalized Sobolev inequalities, and local energy decay.  The parabolic term is avoided for the most part and does not provide additional decay during the course of the proof.

Although many treatments of elasticity use Lagrangian coordinates, the equations will be studied as a first order system in Eulerian coordinates with constraints as in \cite{ST2}.  Using these coordinates, the form of the constraints is particularly useful and the portion of the linear operator due to the pressure waves is easily seen to be higher order.  Written in Eulerian variables, the resulting system is a mixed type PDE, an interpolation between the Navier-Stokes equations and incompressible isotropic elastodynamics.

The proofs in this article utilize previous techniques and insight from the study of 3D quasilinear wave equations and elasticity.  Global existence cannot be expected from a general symmetric hyperbolic system.  For certain types of ``genuine nonlinearity" John proved blow up for the quasilinear wave equation in which the second derivatives of the solution blow up in $L^2$ even for arbitrarily small initial data \cite{John1}.  In turn, he showed a blow up result for elasticity using similar strategies on equations containing these types of ``genuine" nonlinear terms \cite{John3}.  As a complement to John's ``genuine nonlinearity", Christodoulou \cite{Christo1} and Klainerman \cite{Kl4} concurrently identified a ``null condition" for the quadratic terms of the wave equation.  When all the quadratic terms satisfy this null condition, small data global existence can be proved.  Klainerman's techniques involve proving strong dispersive estimates and using of Lorentzian and Gallilean invariant vector fields in the energy without directly estimating the fundamental solution.

Analogous global stability results for nonlinear elasticity could not immediately be realized because the equations contain multiple wave speeds in the linear operator causes a lack of Lorentz invariance.  For elastic materials which are Galilean invariant and isotropic, works by John \cite{John4} and later Klainerman and Sideris \cite{KS1} showed ``almost global" existence for small initial using a smaller set of vector fields.  In \cite{Sid1} Sideris went further, introducing a null condition (similar to that of Klainerman and Christodoulou) for both the shear and pressure waves present in compressible elasticity. He showed that the shear waves inherently satisfy the null condition while the pressure waves generally may not.  Using these null conditions and the previous vector field methods he proved global existence for compressible elastic materials close to equilibrium without estimating the fundamental solution.  Independently, Agemi achieved the same result using null conditions and estimation of the fundamental solution \cite{Agemi1}.  Blow up for elasticity with large initial data was shown by Tahvilar-Zadeh in \cite{TZ1}.

Small data global existence for incompressible elasticity was proven by Sideris and Thomases using the incompressible limit in \cite{ST1} and directly in \cite{ST2}.  Both results use vector field methods and dispersive estimates to prove global existence.  In the case of isotropic elasticity only the inherent null condition for shear waves is needed because the pressure waves do not appear, so no additional assumptions were made on the material in either result.  In \cite{ST3}, Sideris and Thomases obtained dispersive estimates in the framework of a more general result on local energy decay for symmetric hyperbolic systems.  Although systems with certain dissipative terms are handled in these estimates, the techniques cannot be directly applied to the viscoelastic case because the viscosity does not appear in all components of the PDE.

Previous results on the Cauchy problem for incompressible three dimensional viscoelasticity by Lei, Liu, and Zhou \cite{LLZ2}, Lin, Liu, and Zhang \cite{LLZ1}, and also Chen and Zhang \cite{ChenZhang} use the parabolic structure of the equations to achieve global existence.  In each article, the authors study the Oldroyd B model for viscoelasticity, equations with a structure nearly identical to the equations studied herein.  Their global existence proofs involve writing the PDE in terms of a ``special quantity" which is a specific linear combination of the deformation gradient and the Eulerian gradient of the velocity.  This change of variables reveals additional dissipative structure which is then used to prove small data global existence.  For these results, the magnitude of the initial data must be small relative to the viscosity.  Consequently, convergence to the equations for elastodynamics cannot be studied in this way because the initial data vanishes with the viscosity.

To study global solutions for viscoelasticity from the perspective of 3D quasilinear hyperbolic PDE, the paper is organized as follows.  The PDE are derived from constitutive assumptions on the material.  After introducing some necessary notation and stating the main theorem, local energy estimates similar to those of \cite{ST3} are computed.  Projections are then defined on the eigenspaces of the symbol of the hyperbolic operator, and a null condition for shear waves is stated in terms of the projections.  As usual the pressure is shown to be bounded by nonlinear terms, and weighted Sobolev inequalities are stated.  A new addition to these estimates is combined with a Hardy-type inequality to give the key inequality (\ref{sob6}).  Following bootstrapping of the local energy estimates and computation of the energy identity, all of the pieces are pulled together to prove global existence.

\section{Derivation of Equations}

\setcounter{equation}{0}
\renewcommand{\theequation}{2.\arabic{equation}}

First, assume our substance is a continuous distribution of matter at rest and that it fills three dimensional space so that each point $X=(X_1,X_2,X_3) \in \rthree$ corresponds to a point in the substance.  These points $X$ are called material or Lagrangian coordinates.  If we deform our material in a differentiable manner to a different configuration at a later time $t$, we call these new coordinates spatial or Eulerian coordinates and label them $x(t,X)$.  The map $x:\mathbf{R} \times \rthree \rightarrow \rthree$ is an orientation preserving diffeomorphism.  The inverse map $X(t,x)$ returns the material point that has deformed to spatial coordinate $x$ at time $t$.  The deformation gradient is the matrix
$$F = \frac{\partial x}{\partial X}$$
with inverse
$$H = \frac{\partial X}{\partial x}.$$
Both $H$ and $F$ must have positive determinant because of our assumption that $x$ preserves orientation.  We denote $$D=\frac{\partial}{\partial X}=(D_1,D_2,D_3)\quad \mbox{and} \quad \grad = \frac{\partial }{\partial x}=(\partial _1,\partial _2,\partial _3)$$
as well as $D_t$ for Lagrangian time derivatives and $\partial _t$ for Eulerian time derivatives.  Using the chain rule, we have the relations
$$D_j = F ^p _j \deep \quad \mbox{and} \quad D_t = \deet + v\cdot \grad$$
where $v=D_t x$.  Here and in what follows, we use the Einstein summation notation where summing over repeated indices is understood.

Constitutive assumptions must be made to further specify material properties.  For an elastic solid the equations of motion are typically derived from a variational problem, however the dissipative motion of a viscoelastic material prevents us from obtaining our equations in this way.  Therefore, we begin with the force balance laws expressing the conservation of mass and momentum:
\begin{align}
&D_t \rho +\grad \cdot v \rho=0 \label{conslawlag}\\
&\rho D_t v - \grad \cdot T = \rho f. \notag
\end{align}
where $\rho(t,x)$ is the density, $f(t,x)$ is the external force (which we assume to be $0$), and $T(t,x)$ is the Cauchy stress.  If we write these equations with Eulerian time derivatives , we have

\begin{align}
&\deet \rho + v\cdot \grad \rho + \grad \cdot v \rho =0 \label{conslaweul}\\
&\rho \deet v + v\cdot \grad v - \grad \cdot T = 0. \notag
\end{align}

We first assume the material is incompressible.  This ensures the density is constant and transforms the conservation of mass equation into the constraint
\begin{equation}
\grad \cdot v = 0. \label{divv}
\end{equation}
This assumption also means volumes are conserved under the motion so that
\begin{equation}
\det F = \det H = 1. \label{deth}
\end{equation}
For simplicity, we will assume the density is unity for the rest of the paper.

The Cauchy stress $T$ encodes the internal self-interacting forces of the material.  For a viscoelastic material $T$ depends on the pressure $p(t,x)$ (linearly), the deformation gradient $F$ and its Lagrangian time derivative $D_t F$.  By the chain rule, we can write the ODE
$$D_t F = \grad v F$$
so the Cauchy stress can be written as
$$T = -pI + \bar{T}(F, \grad v)$$
where $I$ is the identity matrix.  Assuming objectivity (Galiean invariance) of our material implies that $\bar T$ depends on $\grad v$ through the rate of strain tensor $D^i _j=\frac{1}{2}(\deej v^i + \deei v^j)$.  If we assume this dependence, which determines the internal frictional forces within the material, is linear the Cauchy stress takes the form
\begin{equation}
T = pI + \visc _0 D + \hat{T} (F). \label{teq}
\end{equation}

Here the tensor $\hat{T}$ contains the information about the elastic properties of the material.  If we assume the elastic forces come from an isotropic and objective strain energy function $W(F)$ as in \cite{Sid1,ST2}, then we have
$$W(FU)=W(F)=W(UF)$$
for all proper orthogonal matrices $U$.  The isotropy and objectivity assumptions imply that $W$ depends on $F$ through the principal invariants of the strain matrix $F^T F$.  The Piola-Kirchoff stress, $S$, is the Lagrangian version of the Cauchy stress and is defined as
$$S = \frac{\partial W}{\partial F}. $$
In this case, since our material is incompressible and $\rho =1$, $\hat{T}$ is related to $W$ by
\begin{equation}
F^j_p \frac{\partial W}{\partial F^i_p}(F) = F^j_p S^i_p(F) =  \hat{T}^i_j(F). \label{trelw}
\end{equation}
We will assume the material is stress free at the identity, i.e.
$$\hat{T}(I) = S(I) = 0.$$
This restriction rules out the Oldroyd B model.

Taking Eulerian divergence of (\ref{trelw}) and using $\deej F^j_p =0$ (a consequence of incompressibility), we have
\begin{equation} \label{divt}
[\grad \cdot \hat{T} (F)]^i = \deej (F^j_p \frac{\partial W}{\partial F^i_p}(F)) = \frac{\partial^2 W}{\partial F^i_p \partial F_k ^n } (F) F^j_p \deej F _k ^n \equiv A^{pk}_{in} (F) F^j_p \deej F _k ^n
\end{equation}
where $A=\frac{\partial S}{\partial F}$ is the elasticity tensor.  $A$ is symmetric, i.e. $A^{pk}_{in}=A^{kp}_{ni}$, because
\begin{equation}
\frac{\partial^2 W}{\partial F^i_p \partial F_k ^n } = \frac{\partial^2 W}{ \partial F_k ^n \partial F^i_p}.  \label{d2w}
\end{equation}
We remark that this form of the elasticity tensor is consistent with the notation in \cite{Sid1,ST2}.

We will not make any further assumptions about the nonlinear terms of these elasticity tensors, however, to be consistent with linear elasticity theory, we impose the Legendre-Hadamard ellipticity condition
\begin{equation} \label{LH}
A^{jk}_{in} (I) \xi_j \xi_k \omega^i \omega^n > 0
\end{equation}
for all $\xi,\omega \in S^2$, the unit sphere in $\rthree$.  This is a standard assumption on the form of the linear operator ruling out the incompressible Navier-Stokes equations.  Under the objectivity and isotropy assumptions this condition is equivalent to
$$A^{jk}_{in} (I) = (c_1^2 - 2c_2 ^2)\delta ^j_i \delta ^k_n + c_2 ^2 (\delta ^{jk} \delta _{in}+\delta ^j_n \delta ^k_i) $$
where $\delta$ denotes the unit tensor and the parameters $c_1$ and $c_2$ are positive constants representing the propagation speeds of the pressure and shear waves respectively.

Because of the convenience of the relation
\begin{equation}
\deel \h{i}{j} = \frac{\partial}{\partial x^l}   \frac{\partial X^i}{\partial x^j} = \frac{\partial}{\partial x^j}   \frac{\partial X^i}{\partial x^l} = \deej \h{i}{l} \label{swapind}
\end{equation}
we will write our PDE in terms of $H=F^{-1}$ defined above. Using the fact that $FH=I$ we have
$$\deep F_k^n = -F^n_r F^q_k \deep \h{r}{q}.$$
So we can write
\begin{align}
A^{jk}_{in}(F) F_j^p \deep F _k ^n &= A^{jk}_{ln}(F) \delta ^l _i F_j^p \deep F _k ^n \label{atoahat}
\\
&=A^{lk}_{jn}(F) F^l_m H^m_i F_j^p \deep F _k ^n \notag
\\
&=-A^{lk}_{jn}(F) F^l_m  F_j^p F^n_r F^q_k H^m_i \deep \h{r}{q}. \notag
\end{align}
If we add the null Lagrangian $\mathcal{L}^{pq}_{mr}=c_2^2(\delta^p_m \delta^q_r -\delta^p_r \delta^q_m)$, we can define
\begin{equation} \label{ahat}
\ahat{pq}{mr} (H) = A^{lk}_{jn}(H^{-1}) [H^{-1}]^l_m  [H^{-1}]_j^p [H^{-1}]^n_r [H^{-1}]^q_k + \mathcal{L}^{pq}_{mr}
\end{equation}
so that $\hat{A}(I)$ is positive definite.  This does not change our equations since
$$\mathcal{L}^{pq}_{mr} H^m_i \deep \h{r}{q} = 0$$
via the constraint (\ref{swapind}). $\hat{A}$ satisfies

$$\ahat{pq}{mr} (H) =\ahat{qp}{rm} (H)$$
$$\ahat{pq}{mr} (I) = (c_1^2-c_2^2)\delta ^p_m \delta ^q_r + c_2^2 \delta ^{pq} \delta _{mr} $$
and, also, for $|\Hdot|\equiv|H-I|$ sufficiently small,
\begin{equation} \label{propa}
\ahat{pq}{mr} (H) \hdot{m}{p} \hdot{r}{q} \geq c_2^2|\Hdot|^2,
\end{equation}
by continuity of $\hat{A}$.

We are now in a position to write our PDE in terms of $H$ and $v$.  First, by (\ref{teq}), (\ref{divt}) and (\ref{atoahat}) we have
\begin{align}
(\grad \cdot T)^i &= -\deej p\delta ^j_i +\frac{\visc _0}{2} \deej(\deej v^i +\deei v^j) + (\grad \cdot \hat T)^i \notag \\
 &= -\deei p + \visc \laplacian v^i - \ahat{lm}{pj} (H) H^p_i \deel \h{j}{m} \notag
\end{align}
where $\visc = \frac{\visc _0}{2}$ and we used the incompressibility constraint $\grad \cdot v = 0$.  Our conservation of momentum equation becomes
$$\deet v +v\cdot \grad v +\grad p + \hat{N} ^v (H) =\visc \laplacian v$$
where $\hat{N} ^v (H)^i = \ahat{lm}{pj} (H) \h{p}{i} \deel \h{j}{m}$.

Using the chain rule and the definitions of $H$ and $v$ we have the transport equation
$$\deet H+ v\cdot \grad H+H\grad v = 0.$$
Thus, the equations we have derived for a 3D isotropic incompressible viscoelastic material are
\begin{subequations}
\begin{align}
\deet H+ v\cdot \grad H+H\grad v &= 0 \label{heq1} \\
\deet v +v\cdot \grad v +\grad p + \hat{N} ^v (H) &=\visc \laplacian v. \label{heq2}
\end{align}
\end{subequations}
As we have already seen, the constraints we have accompanying these PDE are
\begin{subequations}
\begin{align}
\grad \cdot v &= 0 \label{hconst1} \\
\det H &=1 \label{hconst2} \\
\deej \h {i}{k} &= \deek \h{i}{j}
\label{hconst3}
\end{align}
\end{subequations}

At this point we remark that the Oldroyd B model for viscoelasticity corresponds to the case where above
$$\ahat{lm}{pj} (H) = (H^{-1})^n_j(H^{-1})^l_q(H^{-1})^m_q(H^{-1})^n_p.$$
Although this form of $\hat{A}$ cannot be derived via our method, the form of the equations is the same and the constraints are precisely the same as they arise solely from incompressibility.  The Oldroyd B system is typically written in terms of the deformation gradient $F$ as
\begin{subequations}
\begin{align}
\deet F+ v\cdot \grad F-F\grad v &= 0 \label{feq1} \\
\deet v +v\cdot \grad v +\grad p -\grad \cdot (FF^T) &=\visc \laplacian v. \label{feq2}
\end{align}
\end{subequations}
To verify equations (\ref{heq1})-(\ref{heq2}) are equivalent to (\ref{feq1})-(\ref{feq2}) in this case, one can check using $F=H^{-1}$ that
$$(H^{-1})^n_j(H^{-1})^l_q(H^{-1})^m_q(H^{-1})^n_p \h{p}{i} \deel \h{j}{m} = -[\grad \cdot (FF^T)] ^i.$$
We also note that the null condition which is discussed in section 6 is also satisfied in the Oldroyd B case.  Therefore, Theorem \ref{main} holds for the Oldroyd B model as well.

Turning again to the general case, we linearize the system (\ref{heq1})-(\ref{heq2}) using the notation $\Hdot =H-I$ and $v=\vdot$ obtaining the equations
\begin{subequations}
\begin{align}
\deet \Hdot+ \grad \vdot &= \tilde{N}^H (\Hdot, \vdot ) \label{hdoteq1} \\
\deet \vdot + \grad \cdot T\Hdot +\grad p -\visc \laplacian \vdot &= \tilde{N}^v (\Hdot, \vdot )
\label{hdoteq2}
\end{align}
\end{subequations}
where
\begin{align*}
(\tilde{N}^H)^i _j (\Hdot, \vdot ) &= -\vdot ^p \deep \hdot {i}{j} +\hdot {p}{j} \deep \vdot ^i
\\
(\tilde{N}^v)^i (\Hdot, \vdot ) &= -\vdot ^p \deep v ^i - \ahat{lm}{pj} (H) \hdot{p}{i} \deel \hdot{j}{m} - [\hat{A} (H)- \hat{A} (I)]^{lm}_{ij} \deel \hdot{j}{m}
\\
(\grad \cdot T\Hdot )^i &= \ahat {lm}{ij} (I) \deel \hdot{j}{m} = \deek [c_2^2 \hdot{i}{k}
+(c_1^2-c_2^2)(\tr \Hdot) \delta ^i_k].
\end{align*}
Our constraints then become
\begin{subequations}
\begin{align}
\grad \cdot \vdot &= 0 \label{hdotconst1} \\
\tr \Hdot &= \frac{1}{2} ((\tr \Hdot)^2-\tr(\Hdot ^2))+\det \Hdot \label{hdotconst2} \\
\deej \hdot {i}{k} &= \deek \hdot{i}{j} \label{hdotconst3}
\end{align}
\end{subequations}
Notice that the incompressibility of the system implies that $\tr \Hdot$ is higher order in \Hdot.  Because of this, we will move the $\tr \Hdot$ portion of the linearity onto the right side of (\ref{hdoteq2}).  For simplicity, we will assume herein that $c_2=1$.  We caution the reader that our dot notation $\Hdot, \vdot$ does not denote derivatives, consistent with the convention in \cite{ST3}.  After a bit of rearranging we have the equations in the form we will work with most frequently:
\begin{subequations}
\begin{align}
\deet \Hdot+  \grad \vdot &= N^H (\Hdot, \vdot) \label{hdoteqc2a}\\
\deet \vdot +  \grad \cdot \Hdot -\visc \laplacian \vdot &= N^v (\Hdot, \vdot)-(c_1^2-1)M^H(\Hdot) - \grad p
\label{hdoteqc2b}
\end{align}
\end{subequations}
where
\begin{align*}
(N^H)^i _j (\Hdot, \vdot) &= - \vdot ^p \deep \hdot {i}{j} +- \hdot {p}{j} \deep \vdot ^i
\\
(N^v)^i (\Hdot, \vdot) &= -  \vdot ^p \deep \vdot ^i -(\ahat{lm}{pj} (H) \hdot{p}{i} \deel \hdot{j}{m} + [\hat{A} (H)- \hat{A} (I)]^{lm}_{ij} \deel \hdot{j}{m}) \\
(M^H)^i (\Hdot) &= - \deei [\frac{1}{2} ((\tr \Hdot)^2-\tr(\Hdot ^2))+\det \Hdot]
\end{align*}
with constraints (\ref{hdotconst1})-(\ref{hdotconst3}).

We also take this opportunity to write the system in the notation of \cite{ST2} with $\Udot=(\Hdot,\vdot)$, $A_k \Udot= (\vdot \otimes e_k, \Hdot e_k)$:
\begin{equation} \label{ueq}
L\Udot \equiv \deet \Udot - A_k\deek \Udot -(0,\visc \laplacian \vdot) = N(\Udot) -  (0, (c_1^2-1)M^H(\Hdot) + \grad p)
\end{equation}
where $N(\Udot)=(N^H(\Udot), N^v(\Udot))$.

\section{Notation and Preliminaries}
\setcounter{equation}{0}
\renewcommand{\theequation}{3.\arabic{equation}}

\normalsize
Throughout the paper we use the following notation without mention.  Most of our norms and inner products will be in $L^2$ and most integrals will be taken over $\rthree$, so we write
$$\norm{\cdot} = \ltwonorm{\cdot},$$
$$\innerprod{\cdot}{\cdot} = \ltwoip{\cdot}{\cdot},$$
and
$$\int =\int _{\mathbf{R}^3}.$$
The notation
$$\langle c \rangle = (c^2 +1)^{\frac{1}{2}}$$
is used as a substitute for the real number $c$ when being bounded away from zero is important.

We use the usual derivative vector fields
$$\partial = (\partial _t, \partial _1, \partial _2, \partial _3) \quad \mbox{and} \quad \grad = (\partial _1, \partial _2, \partial _3)$$
as well as the scaling operator
$$S=t\deet +r\deer,$$
its time independent analogue
$$S_0 = r\deer,$$
and rotational derivatives
$$\Omega =(x \wedge \grad).$$
Here, as usual, $\deer = \frac{x}{r} \cdot \grad$.

Because we are working with vector valued functions we will also need to use
\begin{equation}\label{otwid1}
\omegatwiddle _i (U) = (\Omega _i H + [V^{(i)},H], \Omega _i v + V^{(i)} v),
\end{equation}
where
$$V^{(1)} = e_3\otimes e_2 - e_2\otimes e_3,$$
$$V^{(2)} = e_1\otimes e_3 - e_3\otimes e_1,$$
and
$$V^{(3)} = e_2\otimes e_1 - e_1\otimes e_2.$$
The notation $U=(H,v)$ and $\Udot = (\Hdot, \vdot)$ is used frequently, where $H,\Hdot$ are three by three real-valued matrix functions and $v,\vdot$ are three dimensional real valued vector functions.  The arguments of these functions are nearly always suppressed.  Occasionally we will write
\begin{equation} \label{otwid2}
\omegatwiddle H = \Omega H+ [V, H]~~\textrm{for}~H\in \rthree \otimes \rthree
\end{equation}
and
\begin{equation}\label{otwid3}
\omegatwiddle v = \Omega v + Vv~~\textrm{for}~v\in \rthree
\end{equation}
as notation for the components of $\omegatwiddle U$.  For scalar functions $f$ we define
\begin{equation}\label{otwid4}
\omegatwiddle f=\Omega f.
\end{equation}
For convenience we use the notation
$$\Upsilon = (\grad, \omegatwiddle ).$$
An exponent $\alpha$ on  $\Upsilon$ denotes an ordered $k$-tuple, $(\alpha_1,...,\alpha_k)\in \mathbf{Z}^k$, where $\Upsilon _{\alpha _i}$ makes sense for $1\leq i \leq k$.  We denote
$$\Upsilon ^\alpha = \Upsilon _{\alpha _1} ...\Upsilon _{\alpha _k}. $$
and
$$|\alpha | = k.$$
By $\Upsilon U$ we mean $\Upsilon _i U$ for some $i$.

The following identities and commutation properties of the vector fields are used as well. One extremely useful identity which we will use often is

\begin{equation}
\grad = \omega \deer - \frac{1}{r} (\omega \wedge \Omega)
\label{polargrad}
\end{equation}
for $\omega=\frac{x}{r}$.  The commutator of any two $\Upsilon$s is a $\Upsilon$.  We note that $\Upsilon$ is a Lie algebra with the Lie bracket given by the commutator operation
$$[\Upsilon_i,\Upsilon_j]=\Upsilon_i\Upsilon_j-\Upsilon_j\Upsilon_i.$$
We denote
\begin{equation}
A(\grad) U = A_k \partial_k U = ({ \grad v} , { \grad \cdot H}) \label{defa}
\end{equation}
and recall the definition (\ref{ueq})
\begin{equation}
LU = \deet U + A(\grad) U +(0, -\visc \laplacian v).
\label{defl}
\end{equation}
Because of our assumptions of objectivity and isotropy, $L$ commutes with $\Upsilon$, that is
\begin{equation}
\Upsilon ^\alpha LU = L\Upsilon ^\alpha U.
\label{oopscommute}
\end{equation}
However, $S$ does not commute with $L$.  For a function $f$ and $n=1,2,3,...$ we have the identities

\begin{equation}
S^n \partial f = \partial (S-1)^n f
\label{scommute}
\end{equation}
and

\begin{equation}
S^n \laplacian f = \laplacian \sum _{j=0} ^n
{n\choose j}
(-1)^{n-j}(S-1)^j  f.
\label{snlaplcommute}
\end{equation}
So, if we use the notation $(S-1)=\Stwiddle$ we have

\begin{equation}
S^n LU = [\deet + A(\grad )]\Stwiddle ^n U
-\visc \laplacian \sum _{j=0} ^n
{n\choose j}
(-1)^{n-j}\Stwiddle ^j U.
\label{sncommute}
\end{equation}
One of our main concerns will be dealing with this commutator.

\section{Spaces, Norms and Statement of Main Theorems}

\setcounter{equation}{0}
\renewcommand{\theequation}{4.\arabic{equation}}

Writing $S_0 = r\deer$, the space for the initial conditions is
$$H^{\sigma , \theta} _\Lambda =\{ U=(H,v) : \mathbf{R}^3 \rightarrow (\rthree \otimes \rthree) \times \rthree ~|~ S_0 ^a \Upsilon ^\alpha U \in L^2(\mathbf{R}^3), |\alpha | +a \leq \sigma, a\leq \theta \}.$$
with norm
$$\norm{U} _{H^{\sigma , \theta} _\Lambda} ^2 =  \sum _{|\alpha | +a \leq \sigma \atop a\leq \theta} \norm{S_0 ^a \Upsilon ^\alpha U} ^2.$$
Solutions will be constructed in the space
\begin{align*}
H^{\sigma , \theta} _\Gamma  = \{ U&=(H,v):[0,\infty )\times \rthree \rightarrow (\rthree \otimes \rthree) \times \rthree | \\
& \stwiddleoops{a}{\alpha} \Udot \in C([0,\infty ) ; L^2(\rthree)) ~\textrm{for}~ a\leq \theta, a+|\alpha | \leq \sigma  \}.
\end{align*}
Define corresponding norm to be
$$\norm{U}_{H^{\sigma , \theta} _\Gamma } = \sup_{0\leq t} \Big \{ \langle t \rangle^{-\sqrt{\delta}} \sum _{|\alpha | +a \leq \sigma \atop a\leq \theta} \norm{\stwiddleoops{a}{\alpha} \Udot} \Big \}$$
where $\delta <1$ as in Theorem \ref{main} below.
Given $U \in H^{\sigma , \theta} _\Gamma $ define the energy by
\begin{equation}
\E{\sigma}{\theta} [U] =\frac{1}{2}\sum _{{|\alpha | +a \leq \sigma} \atop {a\leq \theta}}  \int _\rthree \left ( \ahat{lm}{pj} (H) \stwiddleoops{a}{\alpha} \hdot{p}{l} \stwiddleoops{a}{\alpha} \hdot{j}{m} + |\stwiddleoops{a}{\alpha} \vdot |^2 \right ). \label{energy}
\end{equation}
As long as $|\Udot |$ is small, as will always be assumed, we have
\begin{equation} \label{normen}\E{\sigma}{\theta} [U] \sim \sum _{{|\alpha | +a \leq \sigma} \atop {a\leq \theta}} \norm{\stwiddleoops{a}{\alpha} \Udot}^2
\end{equation}
via (\ref{propa}) and the standard Sobolev Embedding Theorem in three dimensions.  This equivalence will be used repeatedly in the sequel without further comment.  The time independent analogue of $E$ is
\begin{equation}
\mathcal{E}_{\sigma, \theta} [U] =\frac{1}{2}\sum _{{|\alpha | +a \leq \sigma} \atop {a\leq \theta}}  \int _\rthree \left ( \ahat{lm}{pj} (H) S_0^{a} \Upsilon^{\alpha} \hdot{p}{l} S_0^{a} \Upsilon^{\alpha} \hdot{j}{m} + |S_0^{a} \Upsilon^{\alpha} \vdot |^2 \right ).
\end{equation}

Using these definitions, we state the main result.

\begin{theorem} \label{main}
Let $X_0$ be an orientation and volume preserving diffeomorphism on $\rthree$, and let $v_0$ be a divergence free vector field on $\rthree$.  Define
\begin{align*}
& U_0 = (H_0,v_0) = (\grad X_0, v_0) \\
& \Udot _0 = (\Hdot _0, \vdot _0) = (H_0-I, v_0 ).
\end{align*}
Suppose that $\Udot _0 \in H^{\kappa , \kappa -3} _\Lambda$ with $\kappa \geq 17$ and that
$$\mathcal{E}_{\kappa, \kappa -3} [U_0] < C, ~~\mathcal{E}_{\kappa-4, \kappa-4} [U_0] < \varepsilon $$
for uniform constants $C$ and $\varepsilon$.

If $\varepsilon$ is sufficiently small, then the initial value problem for (\ref{heq1})-(\ref{heq2}) with initial data $U_0 = U(0)$ has a unique solution $U(t)$ which satisfies the constraints (\ref{hconst1})-(\ref{hconst3}) and is a member of $ H^{\kappa , \kappa -3} _\Gamma  $.  Furthermore, the magnitudes of $\mathcal{E}_{\kappa, \kappa -3} [U_0]$ and $\mathcal{E}_{\kappa-4, \kappa-4} [U_0]$ do not depend on the size of the parameter $\visc$ and the solution satisfies the estimates

\begin{align}
&  \E{\kappa-4}{\kappa-4} [U(t)] + \frac{\visc}{2} \sum _{{a+|\alpha| \leq \kappa-4}} \int_0^t \norm{ \grad \stwiddleoops{a}{\alpha} v} ^2 d\tau \leq C' \E{\kappa-4}{\kappa-4} [U_0] \leq C'\varepsilon \label{main1}\\
& \E{\kappa}{\kappa -3} [U(t)] + \frac{\visc}{2} \sum _{{a+|\alpha| \leq \kappa} \atop {a\leq \kappa-3}} \int_0^t \norm{ \grad \stwiddleoops{a}{\alpha} v} ^2 d\tau \leq C' \E{\kappa}{\kappa -3} [U_0] \langle t \rangle^{ \delta} \label{main2}
\end{align}
for all $t\in(0,\infty)$, where $C'$ is a uniform constant and $\delta < 1$.
\end{theorem}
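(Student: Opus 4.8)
The plan is to prove Theorem~\ref{main} by a continuity (bootstrap) argument built around the hyperbolic energy identity for $L$, the local energy decay estimates, and the weighted Sobolev inequalities announced in the introduction. First I would invoke the standard local theory for quasilinear symmetric hyperbolic systems: since the term $-\visc\laplacian v$ in (\ref{ueq}) is a benign parabolic regularization, it does not shorten the existence time, and one obtains a unique local solution $U\in H^{\kappa,\kappa-3}_\Gamma$ propagating the constraints (\ref{hconst1})--(\ref{hconst3}) on a maximal interval $[0,T)$, with $T$ bounded below through the data only in terms of its $\Upsilon$-norms, \emph{uniformly in} $\visc\ge0$. Hence it suffices to establish the a priori bounds (\ref{main1})--(\ref{main2}) on $[0,T)$ with $C'$ independent of $\visc$; these keep the norm finite and force $T=\infty$. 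So I would make the bootstrap assumption that (\ref{main1})--(\ref{main2}) and the associated decay bound on $\norm{U}_{H^{\kappa,\kappa-3}_\Gamma}$ hold on $[0,T)$ with $C'$ replaced by, say, $2C'$; by continuity and smallness of $\varepsilon$ this holds near $t=0$. Under it, $|\Udot|$ and $|\grad\Udot|$ are small in $L^\infty$, so (\ref{propa}) and (\ref{normen}) apply, and the pressure is controlled: solving the elliptic equation for $p$ arising from (\ref{hdoteqc2b}) together with $\grad\cdot v=0$ bounds $\norm{\grad p}$ (and its $\Upsilon$-derivatives) by terms at least quadratic in $\Udot$. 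The goal is then to recover (\ref{main1})--(\ref{main2}) with the original $C'$.

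Next I would derive the energy identity. Applying $\stwiddleoops{a}{\alpha}$ to (\ref{ueq}) and using the commutation properties (\ref{oopscommute}) and (\ref{sncommute}) gives an inhomogeneous equation for $\stwiddleoops{a}{\alpha}\Udot$. Pairing its $H$-component against $\ahat{lm}{pj}(H)\,\stwiddleoops{a}{\alpha}\hdot{j}{m}$ and its $v$-component against $\stwiddleoops{a}{\alpha}\vdot$, the symmetry $\ahat{pq}{mr}(H)=\ahat{qp}{rm}(H)$ and the constraint (\ref{swapind}) make the principal hyperbolic part antisymmetric, so that
$$\tfrac{d}{dt}\E{\sigma}{\theta}[U]+\visc\!\!\sum_{|\alpha|+a\le\sigma,\ a\le\theta}\!\!\norm{\grad\stwiddleoops{a}{\alpha}v}^2=\mathcal{R},$$
where $\mathcal{R}$ collects the contributions of $N(\Udot)$, of $\grad p$ and $(c_1^2-1)M^H(\Hdot)$ (the latter cubic by (\ref{hdotconst2})), of the commutator between $S$ and $L$ displayed in (\ref{sncommute}), and of $\partial_t\ahat{lm}{pj}(H)$. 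The viscous term on the left carries a favorable sign and is the \emph{only} place viscosity enters; because it merely helps, every subsequent bound is uniform in $\visc$.

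It then remains to show that $\mathcal{R}$ is time-integrable at order $\kappa-4$ and produces only $O(\langle t\rangle^{\delta-1})\,\E{\kappa}{\kappa-3}$-type growth at order $\kappa$ (with $a\le\kappa-3$). For the quadratic part of $N(\Udot)$ I would use the null condition for shear waves: written through the eigenspace projections $\sproj{\projind}$ of the symbol of $A(\grad)$, the self-interaction of the slow ($c_2=1$) waves is annihilated, so each dangerous quadratic term either carries an extra weight $\langle t-r\rangle/\langle t+r\rangle$ or trades a bad derivative for a good ($\Upsilon$) one. Combining this with the weighted Sobolev inequalities --- in particular the key inequality (\ref{sob6}) obtained by feeding a new estimate into a Hardy-type inequality --- and with the local energy decay estimates (in the spirit of \cite{ST3}, bootstrapped as announced), one gets pointwise decay for $\Upsilon^\beta\Udot$ with $|\beta|$ moderate and enough spatial weight to integrate $\mathcal{R}$ in time. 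The upshot is a pair of differential inequalities, $\frac{d}{dt}\E{\kappa-4}{\kappa-4}+\visc(\cdots)\lesssim g(t)\,\E{\kappa-4}{\kappa-4}+(\text{integrable})$ with $\int_0^\infty g<\infty$, and $\frac{d}{dt}\E{\kappa}{\kappa-3}+\visc(\cdots)\lesssim \frac{\delta'}{\langle t\rangle}\,\E{\kappa}{\kappa-3}+(\text{integrable})$; Gronwall then yields (\ref{main1}) with a $\visc$-independent constant and (\ref{main2}) with the $\langle t\rangle^\delta$ growth, provided $\varepsilon$ is small enough to absorb the quadratic feedback and to keep $\sqrt\delta$ and $\delta$ in their admissible ranges. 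Choosing the bootstrap constants compatibly improves $2C'$ back to $C'$, and continuity of $t\mapsto\E{\cdot}{\cdot}[U(t)]$ extends $[0,T)$ to $[0,\infty)$; uniqueness and constraint propagation come from the local theory.

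The hard part, I expect, is the commutator between the scaling operator $S$ and the viscous Laplacian --- the last sum in (\ref{sncommute}) --- handled simultaneously with the null-condition bookkeeping. Because $\visc\laplacian$ acts only on the $v$-equation of (\ref{heq1})--(\ref{heq2}), the intermediate terms $\visc\laplacian\Stwiddle^j U$ cannot be absorbed into a good dissipative quantity at their own order; they must be controlled using the single good viscous term $\visc\norm{\grad\stwiddleoops{a}{\alpha}v}^2$ already present (via Cauchy--Schwarz, with the good term supplying the needed power of $\visc$) together with the decay of lower-order quantities, and this must be done so that no positive power of $\visc$ is consumed (else the bound degenerates as $\visc\to0$) and no negative power appears (else it blows up). Making this balance work at every order --- while keeping the genuinely nonlinear $c_2$-wave self-interactions from destroying the slow-growth estimate (\ref{main2}) --- is the crux of the argument.
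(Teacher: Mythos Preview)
Your overall strategy matches the paper's: continuity argument, hyperbolic energy identity, local energy decay and weighted Sobolev inequalities to bootstrap, null condition via the projections $\sproj{\projind}$ for the low-energy exterior estimate, and a coupled Gronwall for (\ref{main1})--(\ref{main2}). The pressure bound and the high/low energy split are also as in the paper.

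The one genuine gap is your treatment of the $S$--$\laplacian$ commutator. You write the energy identity with $+\visc\sum\norm{\grad\stwiddleoops{a}{\alpha}v}^2$ already on the left and the commutator from (\ref{sncommute}) shoved into $\mathcal{R}$, to be handled ``via Cauchy--Schwarz \ldots\ together with the decay of lower-order quantities.'' Decay plays no role here and cannot: after Cauchy--Schwarz the cross terms are $\visc\,\norm{\grad\stwiddleoops{a}{\alpha}v}\,\norm{\grad\stwiddleoops{j}{\alpha}v}$ with $j<a$, and Young's inequality eats half of the good term at level $a$ but leaves a \emph{bad} contribution $+C\visc\,\norm{\grad\stwiddleoops{j}{\alpha}v}^2$ at a strictly lower $\theta$-level. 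No dispersive estimate produces the factor of $\visc$ needed to absorb this. The paper's mechanism is instead a finite \emph{induction on $\theta$}: the energy identity is kept with the \emph{uncommuted} term $-\visc\int\stwiddleoops{a}{\alpha}v^i\,\soops{a}{\alpha}\laplacian v^i$ on the left through all of the nonlinear estimates; only afterward is (\ref{snlaplcommute}) invoked, and the resulting integrated inequality at level $\theta$ is added to a large constant multiple of the already-established integrated inequality at level $\theta-1$, whose good term $\tfrac{\visc}{2}\sum_{a\le\theta-1}\int_0^t\norm{\grad\stwiddleoops{a}{\alpha}v}^2$ swallows the bad commutator piece. This is purely algebraic and is what keeps the constants uniform in $\visc$.

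A related point you gloss over: the local energy decay here (Theorem~\ref{ledthm}) costs one extra $\Stwiddle$ compared with pure elastodynamics --- the right sides carry $\norm{\stwiddleoops{a+1}{\alpha}\Udot}$ and, in the interior, $\visc\,\norm{\grad\stwiddleoops{a+1}{\alpha}\Hdot}$ --- so the bootstrapped bounds read $\Psi_{\sigma,\theta}(\Udot),\ \Xi_{\sigma,\theta}(\Udot),\ \Chi{\sigma}{\theta}\lesssim\E{\sigma+1}{\theta+1}^{1/2}[U]$, losing one in \emph{both} indices. This is why the high energy must allow $a\le\mu+1=\kappa-3$ while the low energy stops at $a\le\mu$, and why a separate top-level estimate at $\theta=\mu+1$ (where the right side no longer climbs in $\theta$) is needed to close the induction.
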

The proof of this result, which will be given in section 10, uses local energy decay estimates, Sobolev inequalities and hyperbolic energy methods.

We remark that these estimates reduce to those of 3D incompressible elastodynamics (see \cite{ST2}) as $\nu \rightarrow 0$.

\section{Local Energy Decay}

\setcounter{equation}{0}
\renewcommand{\theequation}{5.\arabic{equation}}

In order to take full advantage of energy estimates, we need to derive inequalities that establish decay away from characteristic cones for solutions of the system (\ref{hdoteqc2a})-(\ref{hdoteqc2b}) with constraints (\ref{hdotconst1})-(\ref{hdotconst3}).  The isotropy of the system implies all of the necessary hypotheses for local energy decay which are discussed at length in \cite{ST2}.  The subtle new feature of our equations is appearance of viscosity term only in equation (\ref{hdoteq2}).  When $\visc = 0$ or if we artificially add a viscosity term, $-\visc \laplacian \Hdot$, to (\ref{hdoteq1}) the following result reduces to \cite{ST2}.

Before we begin the proving the estimates, we must specify the regions in $\mathbf{R} \times \mathbf{R}^3$ where the estimates will hold. These regions are dictated by the light cone $|x|=t$ associated with the linear problem.   Define a smooth cutoff $\zeta : \mathbf{R}\rightarrow \mathbf{R}$ which satisfies
$$\zeta \equiv 1 ~~\textrm{on} ~[0,1]$$
and
$$\zeta \equiv 0 ~~\textrm{on} ~[2,\infty ).$$
Let $m> 4$ be a fixed integer.  Define
\begin{equation}
\eta (t,r) = \zeta \left (\frac{mr}{\ctwot} \right ) \label{eta}
\end{equation}
and
\begin{equation}
\gamma (t,r)= 1-\eta (t,r). \label{gamma}
\end{equation}
Abusing notation, we will write $\eta '= \zeta ' (\frac{mr}{\ctwot} )$ and $\gamma '=-\eta '$.

\vspace{.3in}

\begin{theorem} \label{ledthm}  Suppose $\Udot = (\Hdot, \vdot)$  is a solution of

\begin{equation}
\begin{array}{c}
\deet \Hdot+  \grad \vdot = f \\
\\
\deet \vdot +  \grad \cdot \Hdot -\visc \laplacian \vdot = g
\\
\\
\end{array}
\label{hdoteqfg}
\end{equation}
where $f$ and $g$ are forcing terms and that $\Udot$ also satisfies the constraints (\ref{hdotconst1}) and (\ref{hdotconst3}). Then the following bounds hold for $n=1,2,3...$ and exponent $\alpha=(\alpha_1,...,\alpha_k)$ whenever the right hand sides make sense:

\begin{align}
\displaystyle{\sum _{a=0} ^{n}} & \{ \norm{\eta \ctwot \grad \stwiddleoops{a}{\alpha} \Udot} + \norm{\eta \visc t\laplacian \stwiddleoops{a}{\alpha} \vdot} \} \label{intled} \\
& \lesssim \sum _{a=0} ^{n}
\{ \visc \norm{ \grad \stwiddleoops {a+1}{\alpha} \Hdot}
+ \norm{ \stwiddleoops {a+1}{\alpha} \Udot}
+ \norm{\grad \stwiddleoops {a}{\alpha} \Udot}  \notag \\
& \hspace{.2in} + \norm{ \stwiddleoops {a}{\alpha} \Udot}
+ \norm{ \eta t \soops {a}{\alpha} f}
+ \norm{ \eta t \soops {a}{\alpha} g}
+ \visc \norm{ \eta t \grad \cdot \soops {a}{\alpha} f} \}.  \notag
\end{align}
\vspace{.1in}
\begin{align}
\! \displaystyle{\sum _{a=0} ^{n}} & \{ \norm{\gamma [r \deer - t A(\grad )]\stwiddleoops{a}{\alpha} \Udot } + \norm{\gamma \visc t\laplacian \stwiddleoops{a}{\alpha} \vdot} \} \label{extled} \\
& \lesssim \sum _{a=0} ^{n}
\{ \norm{ \omegatwiddle \stwiddleoops {a}{\alpha} \Hdot}
+ \norm{ \stwiddleoops {a+1}{\alpha} \Udot}
+ \norm{\grad \stwiddleoops {a}{\alpha} \vdot}  \notag \\
& \hspace{.2in} + \norm{ \stwiddleoops {a}{\alpha} \vdot}
+ \norm{ \gamma t \soops {a}{\alpha} f}
+ \norm{ \gamma t \soops {a}{\alpha} g} \}.  \notag
\end{align}
Furthermore, the suppressed constants do not depend on the viscosity $\nu$.

\end{theorem}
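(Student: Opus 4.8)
The plan is to follow the symmetric hyperbolic local energy decay machinery of \cite{ST2,ST3}, isolating the one place the viscosity enters. First I would commute $\stwiddleoops{a}{\alpha}$ through (\ref{hdoteqfg}) to reduce both inequalities to the case $a=0$, $\alpha$ empty. Since $\Upsilon^\alpha$ commutes with $L$ by (\ref{oopscommute}) and $S^a$ obeys the commutation identities (\ref{scommute})--(\ref{sncommute}), the pair $w:=\stwiddleoops{a}{\alpha}\Udot=(\mathcal H,\mathcal V)$ solves a system of the same form, $\deet\mathcal H+\grad\mathcal V=F$ and $\deet\mathcal V+\grad\cdot\mathcal H-\visc\laplacian\mathcal V=G$, with $F=\soops{a}{\alpha}f$ and $G=\soops{a}{\alpha}g+\visc\laplacian\sum_{j<a}\binom{a}{j}(-1)^{a-j}\stwiddleoops{j}{\alpha}\vdot$, the $j=a$ term of the binomial sum having been absorbed into the $-\visc\laplacian\mathcal V$ on the left. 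The vector fields $\partial$, $\omegatwiddle$ (with the matrix and vector corrections (\ref{otwid2})--(\ref{otwid3})) and $\Stwiddle$ are designed to preserve the constraints (\ref{hdotconst1}), (\ref{hdotconst3}), so $w$ satisfies them too. Applying the $a=0$ estimate to $w$ then reproduces every term in (\ref{intled})/(\ref{extled}) except the pieces $\visc\norm{\eta t\laplacian\stwiddleoops{j}{\alpha}\vdot}$ ($j<a\le n$) coming from $\norm{\eta tG}$, and those are controlled by an induction on $n$, since they appear among the left side at lower order. Every constant produced this way carries $\visc$ only as an explicit prefactor, so $\visc$-uniformity is not spoiled.

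For the base case, with $\visc\laplacian\vdot$ regarded as forcing in the second equation, the bounds on $\norm{\eta\ctwot\grad\Udot}$ and $\norm{\gamma[r\deer-tA(\grad)]\Udot}$ are the interior and exterior estimates of \cite{ST2,ST3}. The engine is the scaling identity obtained by writing $t\deet=S-r\deer$ in (\ref{hdoteqfg}): $r\deer\Udot-tA(\grad)\Udot+(0,\visc t\laplacian\vdot)=S\Udot-t(f,g)$. On $\mathrm{supp}\,\gamma$, where $r\gtrsim\ctwot$, this together with the polar identity (\ref{polargrad}) yields the exterior bound, the angular part producing the $\omegatwiddle$-norm in (\ref{extled}). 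On $\mathrm{supp}\,\eta$, where $r\le 2\ctwot/m$ with $m>4$, one uses (\ref{polargrad}) to split $A(\grad)=A(\omega)\deer-\frac{1}{r}A(\omega\wedge\Omega)$; the matrix $rI-tA(\omega)$ is uniformly invertible on the relevant spectral subspaces of the symbol $A(\omega)$ there, and on the characteristic subspace the constraints (\ref{hdotconst1}), (\ref{hdotconst3})---divergence-freeness of $\vdot$ and symmetry of $\grad\Hdot$, through a div--curl argument---provide the remaining control. Because $2/m<\tfrac12$, the residual $\norm{\eta\ctwot\grad\Udot}$-type terms that $r\deer$ generates are absorbed on the left, and $\visc$ again enters only through the one explicit term in the identity.

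It then remains to bound the viscosity norms $\norm{\eta\visc t\laplacian\vdot}$ and $\norm{\gamma\visc t\laplacian\vdot}$ on their own. Here I would use the second equation in the form $\visc t\laplacian\vdot=t\deet\vdot+t\grad\cdot\Hdot-tg=(S-r\deer)\vdot+t\grad\cdot\Hdot-tg$: the $S\vdot$ and $tg$ pieces are already on the right-hand sides, $r\deer\vdot$ is absorbed using $r\le 2\ctwot/m$, and the coupling term $t\grad\cdot\Hdot$ is rewritten using the differentiated first equation $\deet(\grad\cdot\Hdot)=\grad\cdot f-\laplacian\vdot$ (valid since $[\deet,\grad]=0$), the commutator $S(\grad\cdot\Hdot)=\grad\cdot(\Stwiddle\Hdot)$, and the symmetry constraint (\ref{hdotconst3}); this is what produces the terms $\visc\norm{\grad\stwiddleoops{a+1}{\alpha}\Hdot}$ and $\visc\norm{\eta t\grad\cdot\soops{a}{\alpha}f}$ on the right of (\ref{intled}), the remaining terms being absorbed as before.

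The main obstacle is exactly this last point. The dissipative term $\visc\laplacian\vdot$ lives only in the $\vdot$-equation, so it is invisible to the hyperbolic scaling identity, yet through $\grad\cdot\Hdot$ it is coupled to the quantity $\norm{\eta\ctwot\grad\Udot}$ one is trying to estimate; untangling this coupling using only the equations and the constraints, while keeping each occurrence of $\visc$ an explicit prefactor (so the constants do not degenerate as $\visc\to 0$, which is what lets Theorem \ref{main} fix the data size independently of $\visc$), and while verifying that the residual terms are genuinely absorbable, is the delicate part---and it is why the cutoff parameter $m$ is chosen strictly larger than $4$.
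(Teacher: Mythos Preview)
Your overall architecture --- commute $\stwiddleoops{a}{\alpha}$ through the system, use the scaling identity $t\deet=S-r\deer$, and close by induction on $a$ to absorb the lower-$j$ viscosity tails --- matches the paper's, and you have correctly identified that the divergence of the first equation (which is the paper's identity (\ref{int3})) is the link between $\visc t\laplacian \vtwiddle$ and $\grad\cdot\Htwiddle$. But the two-step plan you describe for the base case (first run the purely hyperbolic interior estimate with $\visc\laplacian\vdot$ thrown into the forcing, then bound $\norm{\eta\visc t\laplacian\vdot}$ separately) does not close, and this is exactly the delicate point you flag at the end.

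Concretely: treating $\visc\laplacian\vdot$ as forcing in the second equation gives
\[
\norm{\eta\ctwot\grad\Udot}\ \lesssim\ (\text{RHS of (\ref{intled})}) + \norm{\eta\visc t\laplacian\vdot}.
\]
For the second step you propose $\visc t\laplacian\vdot=(S-r\deer)\vdot+t\grad\cdot\Hdot-tg$, which feeds $\norm{\eta t\grad\cdot\Hdot}\le\norm{\eta\ctwot\grad\Udot}$ back into the right side with coefficient~$1$; you cannot absorb it. Your fix --- rewrite $t\grad\cdot\Hdot$ via the divergenced first equation --- produces instead (this is (\ref{int3}))
\[
\visc t\laplacian\vtwiddle=\visc\big(r\deer\grad\cdot\Htwiddle-\grad\cdot\Stwiddle\Htwiddle+t\grad\cdot\ftwiddle\big),
\]
and now the first term contributes $\visc\norm{\eta r\deer\grad\cdot\Htwiddle}\le(2/m)\,\visc\norm{\eta\ctwot\grad^2\Htwiddle}$, which carries one spatial derivative too many: it is neither on the left of (\ref{intled}) nor among the right-hand terms, so it cannot be absorbed or placed anywhere. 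The same derivative loss occurs on the exterior.

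The paper avoids this by \emph{not} separating the two norms with the triangle inequality. It expands the square $\norm{\eta(t\grad\cdot\Htwiddle-\visc t\laplacian\vtwiddle)}^2$ and substitutes (\ref{int3}) only inside the \emph{cross term}. The dangerous piece then appears as the inner product
\[
(I1)=-2\visc t\int\eta^2(\grad\cdot\Htwiddle)\,r\deer(\grad\cdot\Htwiddle)
=-\visc t\int\eta^2\,r\deer|\grad\cdot\Htwiddle|^2,
\]
and after an integration by parts the main contribution $3\visc t\int\eta^2|\grad\cdot\Htwiddle|^2$ is nonnegative; only a cutoff remainder of size $\visc\norm{\grad\cdot\Htwiddle}^2$ survives, which fits on the right of (\ref{intled}). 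The exterior cross terms $(E1)$ and $2\innerprod{\gamma r\deer\vtwiddle}{\gamma\visc t\laplacian\vtwiddle}$ are handled by the same mechanism (integration by parts to extract a sign, then the residual $\visc t\int\gamma|\grad\vtwiddle|^2$ is converted, via one more integration by parts, into $\norm{\vtwiddle}$- and $\norm{\gamma\visc t\laplacian\vtwiddle}$-terms with a small coefficient). So the missing idea in your sketch is that the $\visc$-term must be kept inside the squared norm so that the top-order piece of (\ref{int3}) is seen as a bilinear form with a favorable sign, not estimated in norm.
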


\begin{proof} If we take derivatives $\soops{a}{\alpha}$ in equations (\ref{hdoteqfg}), use commutation properties of the vector fields (\ref{oopscommute}) and (\ref{sncommute}), we get
\begin{subequations}
\begin{align}
& \deet \Htwiddle +  \grad \vtwiddle=\ftwiddle \label{led1a}\\
& \deet \vtwiddle +   \grad \cdot \Htwiddle -\visc  \laplacian \vtwiddle
=\gtwiddle \label{led1b}
\end{align}
\end{subequations}
where $(\Htwiddle, \vtwiddle)=\Utwiddle=\stwiddleoops{a}{\alpha}\Udot$, $\ftwiddle = \soops{a}{\alpha} f$ and

\begin{equation} \label{gtwiddle} \gtwiddle = \soops{a}{\alpha} g +\visc \laplacian \sum _{j=0} ^{a-1}
{n\choose j}
(-1)^{a-j}\Stwiddle ^j \Upsilon ^\alpha \vdot.
\end{equation}
Multiplying equations (\ref{led1a})-(\ref{led1b}) by $t$, moving the $t\deet$ terms to the right and rewriting them using the scaling operator gives
\begin{subequations}
\begin{align}
&  t \grad \vtwiddle=r\deer \Htwiddle -S\Htwiddle + t\ftwiddle \label{led2}\\
&  t \grad \cdot \Htwiddle -\visc t \laplacian \vtwiddle
=r\deer \vtwiddle -S\vtwiddle +t\gtwiddle. \label{led3}
\end{align}
\end{subequations}
Equations (\ref{led2})-(\ref{led3}) will be our starting point for the derivation of the estimates (\ref{intled}) and (\ref{extled}).

\vspace{.1in}

\noindent \textit{Interior Estimate.} $(r<\frac{\ctwot}{m})$

To begin we multiply equations (\ref{led2})-(\ref{led3}) by $\eta$, apply $\norm{\cdot} ^2$ to both equations and use the triangle inequality to obtain
\begin{align}
\norm{\eta  t \grad \vtwiddle}^2
&\leq 2\norm{\eta r\deer \Htwiddle}^2
+4\norm{\eta S\Htwiddle}^2
+ 4\norm{\eta t\ftwiddle}^2 \label{int1}\\
\notag \\
\norm{\eta  t \grad \cdot \Htwiddle}&^2
+ \norm{\eta \visc t \laplacian \vtwiddle}^2
-2\innerprod{\eta  t \grad \cdot \Htwiddle} {\eta \visc t \laplacian \vtwiddle} \label{int2} \\
&\leq 2\norm{\eta r\deer \vtwiddle}^2
+4\norm{\eta S\vtwiddle}^2
+4\norm{\eta t\gtwiddle}^2. \notag
\end{align}
where above and in the future we use the notation $\innerprod{\cdot}{\cdot} = \innerprod{\cdot}{\cdot}_{L^2(\rthree)}$.
The main calculation is estimating the cross term in (\ref{int2}).  Taking divergence of (\ref{led2}), using commutation properties, and multiplying by $\eta  \visc$ we get
\begin{align}
\eta \visc t \laplacian \vtwiddle =
\eta  \visc (r\deer \grad \cdot \Htwiddle -\grad \cdot \Stwiddle \Htwiddle +t\grad \cdot \ftwiddle).
\label{int3}
\end{align}
Thus,
\begin{align}
-2 & \innerprod{\eta  t \grad \cdot \Htwiddle} {\eta \visc t \laplacian \vtwiddle} \label{int4} \\
&=-2\innerprod{\eta  t \grad \cdot \Htwiddle} {\eta  \visc (r\deer \grad \cdot \Htwiddle -\grad \cdot \Stwiddle \Htwiddle +t\grad \cdot \ftwiddle)} \notag \\
&=-2\innerprod{\eta  t \grad \cdot \Htwiddle} {\eta \visc r\deer \grad \cdot \Htwiddle}
+2\innerprod{\eta  t \grad \cdot \Htwiddle} {\eta \visc  \grad \cdot \Stwiddle \Htwiddle} \notag \\
& \hspace{1in} -2\innerprod{\eta  t \grad \cdot \Htwiddle} {\eta  \visc \grad \cdot \ftwiddle} \notag\\
& \equiv (I1) +(I2) +(I3). \notag
\end{align}
For any $\yconst =1,2,3...$,
\begin{align}
(I2) \geq -\yconst \visc ^2 \norm{\eta \grad \cdot \Stwiddle \Htwiddle}^2 -\frac{1}{\yconst}\norm {\eta  t \grad \cdot \Htwiddle}^2 \notag
\end{align}
and
\begin{align}
(I3) \geq -\yconst \visc ^2\norm{\eta t \grad \cdot  \ftwiddle}^2 -\frac{1}{\yconst}\norm {\eta  t \grad \cdot \Htwiddle}^2 \notag
\end{align}
by Young's Inequality.

\vspace{.2in}

Estimating $(I1)$ is a more significant calculation. For all $\yconst = 1,2,3...$
\begin{align}
(I1) &=-2\visc t\int \eta ^2 \deek \Htwiddle ^i_k x_j\deej \deep \Htwiddle ^i_p \notag \\
&= -2\visc t \int \eta ^2 x_j \deej (\frac{1}{2}  |\grad \cdot \Htwiddle|^2) \notag \\
&= 2 \visc t \int \eta \eta ' \frac{mx_j}{\ctwot r} x_j |\grad \cdot \Htwiddle|^2 + 3\visc t \int \eta ^2 |\grad \cdot \Htwiddle|^2 \notag \\
&\geq -2\visc t \int \eta |\eta '| \frac{mr}{\ctwot} |\grad \cdot \Htwiddle|^2 \notag \\
&\geq -4\visc t \int \eta \max |\eta '| |\grad \cdot \Htwiddle|^2 \notag \\
&= -2\int (2\visc \max |\eta '| |\grad \cdot \Htwiddle|) (\eta t |\grad \cdot \Htwiddle|) \notag \\
&\geq -\frac{1}{\yconst}\norm {\eta  t \grad \cdot \Htwiddle}^2 -4 \yconst \visc ^2\max{|\eta '|}^2 \norm {\grad \cdot \Htwiddle}^2. \notag
\end{align}
Here we used integration by parts,$ \frac{mr}{\ctwot} \leq 2$ on supp $\eta$, and Young's Inequality.  The value of $\yconst$ will be determined later. Adding estimates, we have
\begin{align}
-2 \innerprod{\eta  t \grad \cdot \Htwiddle} {\eta \visc t \laplacian \vtwiddle}
&=(I1) +(I2) +(I3) \label{int5} \\
&\geq -\frac{3}{\yconst}\norm {\eta  t \grad \cdot \Htwiddle}^2 -4 \yconst \visc ^2 \max{|\eta '|}^2 \norm {\grad \cdot \Htwiddle}^2
 \notag \\
& \hspace{.5in}-\yconst \visc ^2\norm{\eta \grad \cdot \Stwiddle \Htwiddle}^2
-\yconst \visc ^2 \norm{\eta t \grad \cdot  \ftwiddle}^2. \notag
\end{align}
Combining (\ref{int2}) and (\ref{int5}) gives
\begin{align}
(1&-\frac{3}{\yconst}) \norm{\eta  t \grad \cdot \Htwiddle}^2
+ \norm{\eta \visc t \laplacian \vtwiddle}^2 \label{int6} \\
&\leq 2\norm{\eta r\deer \vtwiddle}^2
+4\norm{\eta S\vtwiddle}^2
+4\norm{\eta t\gtwiddle}^2
+4 \yconst \visc ^2 \max{|\eta '|}^2 \norm {\grad \cdot \Htwiddle}^2 \notag \\
& \hspace{.5in} +\yconst \visc ^2 \norm{\eta \grad \cdot \Stwiddle \Htwiddle}^2
+\yconst \visc ^2 \norm{\eta t \grad \cdot  \ftwiddle}^2. \notag
\end{align}
Now we compute
\begin{align}
\norm{\eta  t \grad \cdot \Htwiddle}^2 &= \int \eta ^2  t^2 \deek \Htwiddle ^i _k \deep \Htwiddle ^i_p \label{int7}\\
&=-\int 2 \eta \eta '  t^2 \frac{mx_k}{\ctwot r} \Htwiddle ^i _k \deep \Htwiddle ^i_p -\int \eta ^2  t^2  \Htwiddle ^i _k \deek \deep \Htwiddle ^i_p \notag \\
&\geq-2\int \eta  t m |\eta '| |\Htwiddle ||\grad \cdot \Htwiddle |
+\int 2 \eta \eta ' t^2 \frac{mx_p}{\ctwot r} \Htwiddle ^i _k \deek \Htwiddle ^i_p \notag \\
& \hspace{.5in} +\int \eta ^2  t^2 \deep \Htwiddle ^i _k \deek \Htwiddle ^i_p  \notag \\
&\geq -2m\int |\eta '||\Htwiddle |(\eta t(|\grad \Htwiddle |+|\grad \cdot \Htwiddle |))
+\int \eta ^2  t^2 |\grad \Htwiddle |^2 \notag \\
&\geq -9m^2\max|\eta '|^2 \norm{\Htwiddle}^2 -\frac{1}{9}(\norm{\eta t\grad \Htwiddle}^2 +\norm{\eta t\grad \cdot \Htwiddle}^2)\notag \\
& \hspace{.5in}+\norm{\eta t\grad \Htwiddle}^2 \notag
\end{align}
where we used integration by parts twice, constraint (\ref{hdotconst3}), and Young's Inequality. Estimate (\ref{int7}) implies
\begin{align}
\frac{4}{5} \norm{\eta t\grad \Htwiddle}^2 - 9m^2\max|\eta '|^2 \norm{\Htwiddle}^2 \leq \norm{\eta t\grad \cdot \Htwiddle}^2. \label{int8}
\end{align}
If we combine (\ref{int1}), (\ref{int6}) and (\ref{int8}), we have
\begin{align}
(1&-\frac{3}{\yconst}) \frac{4}{5} \norm{\eta t\grad \Utwiddle}^2
+\norm{\eta \visc t \laplacian \vtwiddle}^2 \label{int9} \\
&\leq 2\norm{\eta r\deer \Utwiddle}^2
+4\norm{\eta S\Utwiddle}^2
+4\norm{\eta t\gtwiddle}^2
+4\norm{\eta t\ftwiddle}^2
+9m^2\max|\eta '|^2 \norm{\Htwiddle}^2 \notag \\
&\hspace{.2in}+4 \yconst \visc ^2 \max{|\eta '|}^2 \norm {\grad \cdot \Htwiddle}^2
+\yconst \visc ^2 \norm{\eta \grad \cdot \Stwiddle \Htwiddle}^2
+\yconst \visc ^2 \norm{\eta t \grad \cdot  \ftwiddle}^2.\notag
\end{align}

\vspace{.2in}
Using $\norm{\eta r\deer \Utwiddle}^2 \leq \norm{\eta \frac{2 \ctwot}{m} \deer \Utwiddle}^2 \leq \frac{4}{m^2}\norm{\eta t\grad \Utwiddle}^2$, and taking $\yconst =8$, we can absorb $2\norm{\eta r\deer \Utwiddle}^2$ on the left:
\begin{align}
(\frac{1}{2} - & \frac{8}{m^2})\norm{\eta \ctwot \grad \Utwiddle}^2 +\norm{\eta \visc t \laplacian \vtwiddle}^2 \label{int10}\\
&\leq 4\norm{\eta S\Utwiddle}^2
+4\norm{\eta t\gtwiddle}^2
+4\norm{\eta t\ftwiddle}^2
+8 \visc ^2\norm{\eta t \grad \cdot  \ftwiddle}^2
+\norm{\eta \grad \Utwiddle} ^2 \notag \\
&\hspace{.2in} +9m^2\max|\eta '|^2 \norm{\Htwiddle}^2
+ 32 \visc ^2 \max{|\eta '|}^2 \norm {\grad \cdot \Htwiddle}^2
+8 \visc ^2 \norm{\eta \grad \cdot \Stwiddle \Htwiddle}^2
\notag
\end{align}
which we can write without explicit constants as
\begin{align}
&  \norm{\eta \ctwot \grad \stwiddleoops{a}{\alpha} \Udot} + \norm{\eta \visc t\laplacian \stwiddleoops{a}{\alpha} \vdot}  \label{int11} \\
& \lesssim
 \visc \norm{ \grad \stwiddleoops {a+1}{\alpha} \Hdot}
+ \norm{ \stwiddleoops {a+1}{\alpha} \Udot}
+ \norm{\grad \stwiddleoops {a}{\alpha} \Udot}
+ \norm{ \stwiddleoops {a}{\alpha} \Udot} \notag \\
& + \norm{ \eta t \soops {a}{\alpha} f} 
+ \norm{ \eta t \soops {a}{\alpha} g}
+ \visc \norm{ \eta t \grad \cdot \soops {a}{\alpha} f} \notag \\
& + \norm{ \eta \visc t \laplacian \sum _{j=0} ^{a-1}
{n\choose j}  
(-1)^{a-j}\Stwiddle ^j \Upsilon ^\alpha \vdot }   \notag
\end{align}
after recalling the definitions of $\Utwiddle, \Htwiddle, \vtwiddle, \ftwiddle,$ and $\gtwiddle$.  We pause here to derive the corresponding exterior estimate.

\vspace{.2in}

\noindent \textit{Exterior Estimate.} $(r>\frac{2\ctwot}{m})$

Rearranging (\ref{led2})-(\ref{led3}), multiplying by $\gamma$, applying $\norm{\cdot} ^2$ to both equations, using the triangle inequality and adding the resulting estimates together yields

\begin{align}
\norm{\gamma r\deer \Utwiddle & - \gamma t A(\grad) \Utwiddle}^2
+\norm{\gamma t\visc \laplacian \vtwiddle}^2
-2\innerprod{\gamma  t \grad \cdot \Htwiddle-\gamma r\deer \vtwiddle}{\gamma \visc t \laplacian \vtwiddle} \label{ext1} \\
&\leq 2\norm{\gamma S\Utwiddle}^2
+2\norm{\gamma t\ftwiddle} ^2
+2\norm{\gamma t\gtwiddle}^2 \notag
\end{align}
Again, most of the work is calculating the cross terms.  The simpler of the two is

\begin{align}
2\innerprod{\gamma r\deer \vtwiddle}{\gamma \visc t \laplacian \vtwiddle}
&=2\visc t \int \gamma ^2 r\deer \vtwiddle ^i \laplacian \vtwiddle \label{ext2} \\
&=-4 \visc t \int \gamma \gamma ' \frac{x_km}{\ctwot r} x_j \deej \vtwiddle ^i \deek \vtwiddle ^i
-2\visc t \int \gamma ^2 \deek \vtwiddle^i \deek \vtwiddle^i \notag \\
& \hspace{.5in} -2\visc t \int \gamma ^2 x_j \deej \deek \vtwiddle ^i \deek \vtwiddle ^i
\notag \\
&\geq -4\visc t \int \gamma |\gamma '| \frac{mr^2}{\ctwot r} |\deer \vtwiddle |^2
-2\visc t\int \gamma ^2 |\grad \vtwiddle |^2 \notag \\
& \hspace{.2in}+2\visc t \int \gamma \gamma '  \frac{x_jm}{\ctwot r} x_j \deek \vtwiddle ^i \deek \vtwiddle ^i +3\visc t \int \gamma ^2 |\grad \vtwiddle |^2 \notag \\
& \geq -8\visc t\int \gamma |\gamma '| |\deer \vtwiddle |^2 -4\visc t \int \gamma |\gamma '| \frac{mr^2}{\ctwot r} |\grad \vtwiddle |^2 \notag \\
& \hspace{.5in} +\visc t \int \gamma ^2 |\grad \vtwiddle |^2. \notag \\
&\geq -12\visc t \max |\gamma '| \int \gamma  |\grad \vtwiddle|^2, \notag
\end{align}
where we used integration by parts, and $\frac{mr}{\ctwot} \geq 2$ on supp $\gamma '$. We pause to integrate by parts

\begin{align}
-\int \gamma  |\grad \vtwiddle|^2
& = -\int  \gamma \deek \vtwiddle ^i \deek \vtwiddle ^i \label{ext3} \\
&=  -\int \gamma ' \frac{x_km}{\ctwot r} \vtwiddle ^i \deek v^i - \int \gamma \vtwiddle ^i \laplacian \vtwiddle ^i \notag \\
&\geq  -\int |\gamma '| \frac{m}{\ctwot} |\vtwiddle ||\grad \vtwiddle | - \int \gamma |\vtwiddle ||\laplacian \vtwiddle |  \notag \\
&\geq  -m \ctwot ^{-1} \max |\gamma '|  \int |\vtwiddle ||\grad \vtwiddle | -\int |\vtwiddle | |\gamma \laplacian \vtwiddle |. \notag
\end{align}
Now (\ref{ext2}), (\ref{ext3}) and Young's inequality imply

\begin{align}
2\innerprod{\gamma r\deer \vtwiddle}{\gamma \visc t \laplacian \vtwiddle}
&\geq -12\visc t \max |\gamma '| \int \gamma  |\grad \vtwiddle|^2 \label{ext4} \\
&\geq -12\visc \max|\gamma '|^2 \int |\vtwiddle ||\grad \vtwiddle |
 -12 \max |\gamma '| \int |\vtwiddle | |\gamma \visc t \laplacian \vtwiddle | \notag \\
&\geq -\max|\gamma '|^2 \left ( 72\yconst \norm{\vtwiddle}^2
+\frac{(\visc m)^2}{ \yconst} \norm{\grad \vtwiddle}^2 \right ) -\frac{1}{\yconst} \norm{\gamma \visc t \laplacian \vtwiddle}^2 ,\notag
\end{align}
for any positive integer $\yconst$.

To estimate the  other cross term in (\ref{ext1}), we start by recalling (\ref{polargrad})
 \[
 \grad = \omega \deer - \frac{1}{r} (\omega \wedge \Omega)
 \]
to get
\begin{align}
  \grad \cdot \Htwiddle &=  [\omega \deer - \frac{1}{r} (\omega \wedge \Omega)]\cdot \Htwiddle \label{ext5} \\
 &= \omega \cdot \deer \Htwiddle - \frac{1}{r} (\omega \wedge \Omega)\cdot \Htwiddle. \notag
\end{align}
Using (\ref{led2}) we have

\begin{align}
  t \omega \cdot \deer \Htwiddle &=   t \omega \cdot (\frac{ t}{r} \grad \vtwiddle + \frac{1}{r} S\Htwiddle - \frac{t}{r} \ftwiddle) \label{ext6} \\
& =  \frac{t^2}{r} \omega \cdot \grad \vtwiddle  +  \frac{ t} {r} \omega \cdot S\Htwiddle - \frac{ t^2} {r} \omega \cdot \ftwiddle. \notag
\end{align}
Combining (\ref{ext5}), and (\ref{ext6}) gives us

\begin{align}
-2\innerprod{\gamma  & t \grad \cdot \Htwiddle}{\gamma \visc t \laplacian \vtwiddle} \label{ext7} \\
&= -2\innerprod{\gamma \frac{t^2}{r} \omega \cdot \grad \vtwiddle}{\gamma \visc t \laplacian \vtwiddle}
-2\innerprod{\gamma \frac{ t} {r} \omega \cdot S\Htwiddle }{\gamma \visc t \laplacian \vtwiddle} \notag \\
& \hspace{.2in} +2\innerprod{\gamma \frac{ t^2} {r} \omega \cdot \ftwiddle}{\gamma \visc t \laplacian \vtwiddle}
+2 \innerprod{\gamma  t \frac{1}{r} (\omega \wedge \Omega)\cdot \Htwiddle}{\gamma \visc t \laplacian \vtwiddle} \notag \\
&\equiv (E1)+(E2)+(E3)+(E4) .\notag
\end{align}
Using Young's inequality and the fact that $\frac{\ctwot }{mr}\leq 1$ on supp $\gamma$, we can bound $(E2)+(E3)+(E4)$ below by

\begin{align}
-\frac{3}{\yconst} \norm{\gamma \visc t \laplacian \vtwiddle}^2 -\yconst m^2 (\norm{\gamma S\Htwiddle }^2 + \norm{\gamma t\ftwiddle}^2 + \norm{\gamma \Omega \Htwiddle}^2) \label{ext8}
\end{align}
Now we only have one term left to bound:

\begin{align}
(E1) &= -2\visc  t^3 \int \gamma ^2 r^{-2} x_k \deek \vtwiddle ^i \laplacian \vtwiddle ^i \label{ext9}
\\
&= 2\visc  t^3 \Big( \int \gamma ^2 r^{-2} x_k \deek \deej \vtwiddle ^i \deej \vtwiddle ^i +\int 2\gamma \gamma ' \frac{m x_k x_j}{\ctwot r^3} \deek \vtwiddle ^i \deej \vtwiddle ^i \notag
\\
&\hspace{.5in} + \int \gamma ^2 r^{-2} \deej \vtwiddle ^i \deej \vtwiddle ^i -\int 2\gamma ^2 \frac{x_k x_j}{r^4} \deek \vtwiddle ^i \deej \vtwiddle ^i \Big)\notag
\\
&= 2\visc  t^3 \Big( \int \gamma ^2 r^{-2} x_k \deek (\frac{1}{2} |\grad \vtwiddle|^2)
+\int \gamma ^2 r^{-2} |\grad \vtwiddle|^2 \Big) \notag
\\
&\hspace{.5in} +4\visc  t^3 \Big( \int \gamma \gamma ' \frac{m}{\ctwot r} |\deer \vtwiddle |^2 - \int \gamma ^2 r^{-2} |\deer \vtwiddle |^2 \Big)
\notag \\
&\geq 2\visc  t^3\Big( -\int \gamma \gamma ' \frac{mx_k}{\ctwot r^3} x_k |\grad \vtwiddle|^2
+\int \gamma ^2 \frac{x_k}{ r^4} x_k |\grad \vtwiddle|^2
\notag \\
& \hspace{.2in} -\frac{3}{2} \int \gamma ^2 r^{-2} |\grad \vtwiddle|^2 -\int \gamma ^2 r^{-2} |\deer \vtwiddle |^2 - 2 \int \gamma |\gamma '| \frac{m}{\ctwot r} |\deer \vtwiddle |^2 \Big).
\notag
\end{align}
So far, we have only used integration by parts.  Continuing our estimate we repeatedly apply the facts $\frac{mr}{\ctwot }\leq 1$ on supp $\gamma$ and $|\deer \vtwiddle | \leq |\grad \vtwiddle |$ to get
\begin{align}
(E1) &\geq -6\visc  t^3 m^2 \Big( \int \gamma |\gamma '| \ctwot ^{-1} (mr)^{-1} |\grad \vtwiddle|^2
+ \int \gamma ^2 (mr)^{-2} |\grad \vtwiddle|^2 \Big)
 \label{ext9a}\\
&\hspace{.5in} -2\visc c_2^2 t^3 m^3 \int \gamma ^2 \ctwot ^{-1} (mr)^{-2} |\grad \vtwiddle|^2
\notag \\
& \geq -6\visc t m^2 \Big( \max  |\gamma '| \int \gamma |\grad \vtwiddle|^2
+\int \gamma ^2 |\grad \vtwiddle|^2 \Big)
-2\visc m^3 \int \gamma ^2 |\grad \vtwiddle|^2
\notag \\
& \geq -22\visc t m^2 \max|\gamma '| \int \gamma |\grad \vtwiddle|^2
\notag \\
&\geq -22m^3 \visc  \max |\gamma '| ^2 \int |\vtwiddle ||\grad \vtwiddle |
-22m^2 \max |\gamma '| \int |\vtwiddle | |\gamma \visc t \laplacian \vtwiddle |
\notag \\
&\geq -\max|\gamma '|^2 (242m^4\yconst \norm{\vtwiddle}^2
+\frac{(\visc m)^2}{\yconst} \norm{\grad \vtwiddle}^2)
-\frac{1}{\yconst} \norm{\gamma \visc t \laplacian \vtwiddle}^2 \notag
\end{align}
for any positive integer $\yconst$. In the last two lines we used (\ref{ext3}) and Young's inequality.

Combining (\ref{ext4}), (\ref{ext8}), and (\ref{ext9}) with $\yconst =10$, we have
\begin{align}
-2\innerprod{\gamma  t \grad & \cdot \Htwiddle-\gamma r\deer \vtwiddle}{\gamma \visc t \laplacian \vtwiddle} \label{ext10} \\
&\geq -\max |\gamma '|^2(720+2420m^4)\norm{\vtwiddle}^2
-\max |\gamma '|^2 \frac{(\visc m)^2}{5 }\norm{\grad \vtwiddle}^2 \notag \\
&\hspace{.2in}- 10m^2(\norm{\gamma S\Htwiddle }^2
+ \norm{\gamma t\ftwiddle}^2
+ \norm{\gamma \Omega \Htwiddle}^2)
-\frac{1}{2}\norm{\gamma \visc t \laplacian \vtwiddle}^2. \notag
\end{align}
Inserting this into (\ref{ext1}) gives

\begin{align}
\norm{\gamma r\deer \Utwiddle & - \gamma t A(\grad) \Utwiddle}^2
+\frac{1}{2} \norm{\gamma t\visc \laplacian \vtwiddle}^2
\label{ext11} \\
&\leq 2\norm{\gamma t\gtwiddle}^2
+(10m^2+2)(\norm{\gamma S\Utwiddle }^2
+ \norm{\gamma t\ftwiddle}^2
+\norm{\gamma \Omega \Htwiddle}^2) \notag \\
& \hspace{.2in} +\max |\gamma '|^2(720+2420m^4)\norm{\vtwiddle}^2
+\max |\gamma '|^2 \frac{(\visc m)^2}{5 } \norm{\grad \vtwiddle}^2
\notag
\end{align}
which can be written without explicit constants as

\begin{align}
\norm{\gamma r\deer & \stwiddleoops{a}{\alpha} \Udot  - \gamma t A(\grad) \stwiddleoops{a}{\alpha} \Udot}
+\norm{\gamma t\visc \laplacian \stwiddleoops{a}{\alpha} \vdot}
\label{ext12} \\
&\lesssim \norm{\gamma t\soops{a}{\alpha} g}
+\norm{\stwiddleoops{a+1}{\alpha} \Udot }
+ \norm{\gamma t \soops{a}{\alpha} f}
+\norm{\omegatwiddle \stwiddleoops{a}{\alpha} \Hdot} \notag \\
& \hspace{.2in} + \norm{\stwiddleoops{a}{\alpha} \Udot}
+\norm{\grad \stwiddleoops{a}{\alpha} \vdot}
+\norm{ \gamma \visc t \laplacian \sum _{j=0} ^{a-1}
{n \choose j}
(-1)^{a-j}\Stwiddle ^j \Upsilon ^\alpha \vdot }.
\notag
\end{align}

\noindent \textit{Induction on \textit{a}.}

Estimates (\ref{int11}) and (\ref{ext12}) are analogous, corresponding to (\ref{intled}) and (\ref{extled}) respectively.  For each, we must deal with the final term on the right hand side.  We will use an inductive argument to handle these terms.

To prove (\ref{intled}), first assume $a=0$. Using (\ref{int11}), we have
\begin{align}
\norm{\eta \ctwot \grad &\Upsilon ^{\alpha} \Udot} + \norm{\eta \visc t\laplacian \Upsilon ^{\alpha} \vdot}  \label{ind1} \\
& \lesssim
 \norm{ \grad \stwiddleoops {}{\alpha} \Hdot}
+ \norm{ \stwiddleoops {}{\alpha} \Udot}
+ \norm{\grad \Upsilon ^{\alpha} \Udot}
+ \norm{ \Upsilon ^{\alpha} \Udot}
+ \norm{ \eta t \Upsilon ^{\alpha} f} \notag \\
& \hspace{.2in}
+ \norm{ \eta t \Upsilon ^{\alpha} g}
+ \norm{ \eta t \grad \cdot \Upsilon ^{\alpha} f}.
\notag
\end{align}
For $a=1$
\begin{align}
&  \norm{\eta \ctwot \grad \stwiddleoops{}{\alpha} \Udot} + \norm{\eta \visc t\laplacian \stwiddleoops{}{\alpha} \vdot}  \label{ind2} \\
& \lesssim
 \norm{ \grad \stwiddleoops {2}{\alpha} \Hdot}
+ \norm{ \stwiddleoops {2}{\alpha} \Udot}
+ \norm{\grad \stwiddleoops {}{\alpha} \Udot}
+ \norm{ \stwiddleoops {}{\alpha} \Udot}
+ \norm{ \eta t \soops {}{\alpha} f} \notag \\
& \hspace{.2in}
+ \norm{ \eta t \soops {}{\alpha} g}
+ \norm{ \eta t \grad \cdot \soops {}{\alpha} f}
+ \norm{ \eta \visc t \laplacian \Upsilon ^\alpha \vdot }   \notag
\end{align}
which is bounded by
\begin{align}
& \displaystyle{\sum _{a=0} ^{1}} \{ \norm{ \grad \stwiddleoops {a+1}{\alpha} \Hdot}
+ \norm{ \stwiddleoops {a+1}{\alpha} \Udot}
+ \norm{\grad \stwiddleoops {a}{\alpha} \Udot}
+ \norm{ \stwiddleoops {a}{\alpha} \Udot} \label{ind3}  \\
& \hspace{.2in} + \norm{ \eta t \soops {a}{\alpha} f}
+ \norm{ \eta t \soops {a}{\alpha} g}
+ \norm{ \eta t \grad \cdot \soops {a}{\alpha} f} \} \notag
\end{align}
after applying (\ref{ind1}) to the final term of (\ref{ind2}).

More generally if we assume (\ref{intled}) holds for $n=k$, then we have
\begin{align}
& \norm{ \eta \visc t \laplacian \sum _{j=0} ^{k}
{n \choose j}
(-1)^{a-j}\Stwiddle ^j \Upsilon ^\alpha \vdot }  \label{ind4} \\
&  \lesssim
\sum _{j=0} ^{k} \norm{ \eta \visc t \laplacian
\Stwiddle ^j \Upsilon ^\alpha \vdot } \notag \\
&  \lesssim
\sum _{j=0} ^{k}
\{ \norm{ \grad \stwiddleoops {j+1}{\alpha} \Hdot}
+ \norm{ \stwiddleoops {j+1}{\alpha} \Udot}
+ \norm{\grad \stwiddleoops {j}{\alpha} \Udot}  \notag \\
& \hspace{.2in} + \norm{ \stwiddleoops {a}{\alpha} \Udot}
+ \norm{ \eta t \soops {j}{\alpha} f}
+ \norm{ \eta t \soops {j}{\alpha} g}
+ \norm{ \eta t \grad \cdot \soops {j}{\alpha} f} \}  \notag
\end{align}
which implies (using (\ref{int11}))

\begin{align}
& \norm{\eta \ctwot \grad \stwiddleoops{k+1}{\alpha} \Udot} + \norm{\eta \visc t\laplacian \stwiddleoops{k+1}{\alpha} \vdot} \label{ind5} \\
& \lesssim \sum _{a=0} ^{k+1}
\{ \norm{ \grad \stwiddleoops {a+1}{\alpha} \Hdot}
+ \norm{ \stwiddleoops {a+1}{\alpha} \Udot}
+ \norm{\grad \stwiddleoops {a}{\alpha} \Udot}  \notag \\
& \hspace{.2in} + \norm{ \stwiddleoops {a}{\alpha} \Udot}
+ \norm{ \eta t \soops {a}{\alpha} f}
+ \norm{ \eta t \soops {a}{\alpha} g}
+ \norm{ \eta t \grad \cdot \soops {a}{\alpha} f} \}  \notag
\end{align}
Combining this estimate with our inductive hypothesis gives (\ref{intled}) for general $n$.  (\ref{extled}) follows from an identical argument using (\ref{ext12}).

\end{proof}

\section{The Null Condition and Spectral Projections}

\setcounter{equation}{0}
\renewcommand{\theequation}{6.\arabic{equation}}

For $\omega \in S^2$ let  $\ev{+}=1,\ev{-}=-1$ and $\ev{0}=0$ be the eigenvalues of $A(\omega)\equiv A_k \omega_k$ with $A_k$ defined as in (\ref{ueq}). Denote the orthogonal projections onto the corresponding eigenspaces by $\sproj{+}(\omega),\sproj{-}(\omega)$ and $\sproj{0}(\omega)$.  Using the formula

\begin{equation}
\sproj{\projind}(\omega)\Udot = \prod _{\chi \neq \projind} \frac{1}{\lambda _\projind -\lambda _\chi} (A(\omega) - \lambda _\chi I) \Udot \label{projform}
\end{equation}
we compute
\begin{equation}
\sproj{+}(\omega)\Udot = \frac{1}{2}((\vdot+\Hdot \omega )\otimes \omega, \vdot+\Hdot \omega) \label{pplus}
\end{equation}
\begin{equation}
\sproj{-}(\omega)\Udot = \frac{1}{2}(-(\vdot-\Hdot \omega )\otimes \omega, \vdot - \Hdot \omega) \label{pminus}
\end{equation}
and
\begin{equation}
\sproj{0}(\omega)\Udot = (\Hdot (I-\omega \otimes \omega) , 0 ). \label{pzero}
\end{equation}

For notational simplicity we will suppress the argument $\omega$ in our calculations.  Using these projections we utilize a calculation that appeared in \cite{ST2}.
\begin{align}
|( \ev{\projind}t - r ) &\sproj{\projind} \deej \stwiddleoops{a}{\alpha} \Udot| \label{extproj}
\\
&=|\sproj{\projind}(tA(\omega)-rI) \deej \stwiddleoops{a}{\alpha} \Udot| \notag
\\
&\leq |(tA(\omega)-rI) \deej \stwiddleoops{a}{\alpha} \Udot| \notag
\\
&=|(tA_k-r\omega^kI)\omega^k \deej \stwiddleoops{a}{\alpha} \Udot| \notag
\\
&=|(tA_k-r\omega^kI)(\omega^j\deek + \frac{1}{r} \Omega_{kj})\stwiddleoops{a}{\alpha} \Udot| \notag
\\
&=\Big| \Big[\omega^j(tA(\grad)-r\deer)+(tA_k - r\omega^kI)\frac{1}{r} \Omega_{kj} \Big] \stwiddleoops{a}{\alpha} \Udot \Big|\notag
\\
&\leq |(tA(\grad)-r\deer) \stwiddleoops{a}{\alpha} \Udot | + C\Big|\frac{t}{r} +1\Big||\Omega \stwiddleoops{a}{\alpha} \Udot |. \notag
\end{align}
Now we rewrite estimate (\ref{extled}) as
\begin{align}
\! \sum_{j=1}^{3} \sum _\projind \displaystyle{\sum _{a=0} ^{n}} & \{ \norm {\gamma \langle \ev{\projind}t - r \rangle \sproj{\projind} \deej \stwiddleoops{a}{\alpha} \Udot } + \norm{\gamma \visc t\laplacian \stwiddleoops{a}{\alpha} \vdot} \} \label{extled1} \\
& \lesssim \sum _{a=0} ^{n}
\{ \norm{ \omegatwiddle \stwiddleoops {a}{\alpha} \Udot}
+ \norm{ \stwiddleoops {a+1}{\alpha} \Udot}
+ \norm{\grad \stwiddleoops {a}{\alpha} \vdot}  \notag \\
& \hspace{.2in} + \norm{ \stwiddleoops {a}{\alpha} \vdot}
+ \norm{ \gamma t \soops {a}{\alpha} f}
+ \norm{ \gamma t \soops {a}{\alpha} g} \}  \notag
\end{align}
where $\omega=\frac{x}{|x|}$ in each expression above.

The null condition is an inherent property of the quadratic coefficients in the nonlinear terms of our PDE.  Formally, it states that the quadratic terms are linearly degenerate causing them to decay enough to prove global existence.  Without a null condition, we may have a genuine nonlinearity (see \cite{John3}) meaning a global existence proof would not be possible via hyperbolic energy estimates.  The way we use the condition can be seen in the energy estimates (\ref{enest17})-(\ref{enest21}).

In our work with the null condition it is convenient to define
\begin{align}
& \Pee{1}(\omega) u =
u - \rthreeip{u}{\omega} \omega  \notag \\
& \Pee{2}(\omega) u = \rthreeip{u}{\omega} \omega \notag
\end{align}
for $u\in \mathbf{R}^3 $.  $\Pee{1}(\omega)$ is the orthogonal projection onto $\textrm{span}\{\omega\}^{\perp}$ and $\Pee{2}(\omega)$ is the orthogonal projection onto $\textrm{span}\{\omega\}$.  Again, the argument $\omega$ will be implied in calculations.

The elasticity tensor $\hat{A}$ satisfies a null condition restricting the quadratic interaction of shear waves.  More precisely, for $B_{pjk} ^{lmn} = \frac{\partial \hat{A}^{lm}_{pj}}{\partial H^k_n}, \hat{A}^{lm}_{pj} \delta^n_k $ we have

\begin{align}
B_{pjk} ^{lmn} (I) \omega _l \omega _m \omega _n \xi _{(1)} ^p \xi _{(2)} ^j \xi _{(3)} ^k = 0 \label{nc}
\end{align}
for $\omega \in S^2$, $\xi _{(i)} \in \textrm{span}~\{\omega\}^\perp$, $i=1,2,3$.  Using our projections we can write
\begin{align}
B_{pjk} ^{lmn} (I) \omega _l \omega _m \omega _n (\Pee{1} \xi _{(1)}) ^p (\Pee{1}\xi _{(2)}) ^j (\Pee{1}\xi _{(3)}) ^k = 0 \label{ncproj}
\end{align}
for all $\omega \in S^2$, $\xi _{(i)} \in \rthree$.  The tensor $\hat{A}^{lm}_{pj} \delta^n_k$ clearly satisfies the null condition since
\begin{align}
\hat{A}_{pj} ^{lm}(I) &\delta ^n_k \omega _l \omega _m \omega _n (\Pee{1} \xi _{(1)}) ^p (\Pee{1}\xi _{(2)}) ^j (\Pee{1}\xi _{(3)}) ^k \label{ncAd} \\
&= \hat{A}_{pj} ^{lm} (I)\omega _l \omega _m (\Pee{1} \xi _{(1)}) ^p (\Pee{1}\xi _{(2)}) ^j \rthreeip{\omega}{(\Pee{1}\xi _{(3)})}
\\
&= 0. \notag
\end{align}
If we write $\frac{\partial \hat{A}^{lm}_{pj}}{\partial H^k_n}$ in terms of the elasticity tensor $A$ we have
\begin{align}
\frac{\partial }{\partial H^k_n} \hat{A}_{pj} ^{lm}(H) &= -\frac{\partial }{\partial H^k_n} [A^{PJ}_{LM} (F) F^p_P F^j_J F^L_l F^M_m]
\end{align}
If we differentiate term by term, the most difficult term comes when the derivative falls on $A$
\begin{align}
-\frac{\partial }{\partial H^k_n} [A^{PJ}_{LM} (F)] F^p_P F^j_J F^L_l F^M_m&= -\frac{\partial }{\partial F^K_N} A^{PJ}_{LM} (F) \frac{\partial }{\partial H^k_n} [F^K_N] F^p_P F^j_J F^L_l F^M_m
\\
&= \frac{\partial }{\partial F^K_N} A^{PJ}_{LM} (F) F_k^K F^n_N F^p_P F^j_J F^L_l F^M_m \notag
\end{align}
which is equal to
$$\frac{\partial }{\partial F^k_n} A^{pj}_{lm} (I)$$
when evaluated at the identity.  The proof that this tensor satisfies (\ref{nc}) can be found in \cite{Sid1}. When the derivative falls on $F^p_P$ we have
\begin{equation}
-A^{PJ}_{LM} (F) \frac{\partial }{\partial H^k_n} [F^p_P] F^j_J F^L_l F^M_m  = A^{PJ}_{LM} (F) F^p_k F^n_P F^j_J F^L_l F^M_m
\end{equation}
which equals
$$A^{nj}_{lm} (I) \delta^p_k$$ at the identity.  This satisfies (\ref{nc}) by (\ref{ncAd}).  A similar argument holds for differentiation of the other $F$ terms.

\vspace{.2in}

For bookkeeping purposes we define the weighted $L^2$ norms
\begin{equation}
\Chi{\sigma}{\theta} = \displaystyle{\sum _\projind \sum _j \sum _{|\alpha|+a \leq \sigma -1 \atop a\leq \theta}}
\norm{\langle \ev{\projind}t - r \rangle \sproj{\projind} \deej \stwiddleoops{a}{\alpha} \Udot} \label{chi}
\end{equation}

\begin{equation}
\Xi _{\sigma,\theta}(\Udot) = \displaystyle{\sum _\projind \sum _j \sum _{|\alpha|+a \leq \sigma -1 \atop a\leq \theta
}}
\norm{\gamma \langle \ev{\projind}t - r \rangle \sproj{\projind} \deej \stwiddleoops{a}{\alpha} \Udot}\label{xi}
\end{equation}
and
\begin{equation}
\Psi _{\sigma,\theta}(\Udot) = \displaystyle{\sum _{
|\alpha|+a \leq \sigma -1 \atop a\leq \theta
}}
\norm{\eta \ctwot  \grad \stwiddleoops{a}{\alpha} \Udot} \label{psi}
\end{equation}
These quantities will be used to link our local energy decay estimates to the energy estimates via our bootstrapping lemma to follow.

\section{Bound for the Pressure}

\setcounter{equation}{0}
\renewcommand{\theequation}{7.\arabic{equation}}

To eliminate the pressure term in local energy decay, we will bound it by the nonlinear terms $M^H$ and $N^v$ using the equations and constraints for $\Hdot$.

\vspace{.1in}

\begin{lemma} \label{pressure} Let $a=0,1,2,3...$, and $\alpha$ be a suitable exponent for $\Upsilon$.  Then for $p$ satisfying equations (\ref{hdoteqc2a})-(\ref{hdoteqc2b}) and constraints (\ref{hdotconst1})-(\ref{hdotconst3}),
$$\norm{\grad \stwiddleoops{a}{\alpha} p} \lesssim \norm{\soops{a}{\alpha}N^v (\Udot) } + \norm{ \soops{a}{\alpha} M^H(\Hdot)}.$$
\end{lemma}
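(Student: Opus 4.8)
The plan is to turn the velocity equation (\ref{hdoteqc2b}) into an elliptic equation for $p$ by taking the Eulerian divergence, and then to observe — this is the whole point — that the constraints force the right-hand side of that equation to be purely nonlinear, after which a standard $L^2$-bounded singular integral finishes the job. First I would apply $\grad\cdot$ to (\ref{hdoteqc2b}). The divergence-free constraint (\ref{hdotconst1}) kills two terms, $\grad\cdot\deet\vdot = \deet(\grad\cdot\vdot) = 0$ and $\visc\grad\cdot\laplacian\vdot = \visc\laplacian(\grad\cdot\vdot) = 0$; this second identity is exactly why the viscosity drops out of the estimate. What survives is
\[ \grad\cdot(\grad\cdot\Hdot) = \grad\cdot N^v - (c_1^2-1)\grad\cdot M^H - \laplacian p. \]

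Next I would rewrite the left-hand side. Since $\grad\cdot(\grad\cdot\Hdot) = \deei\deek\hdot{i}{k}$, the symmetry constraint (\ref{hdotconst3}) gives, for each value of the upper index, $\deei\hdot{i}{k} = \deek(\tr\Hdot)$, so $\grad\cdot(\grad\cdot\Hdot) = \laplacian(\tr\Hdot)$. Meanwhile the incompressibility constraint (\ref{hdotconst2}) says precisely that $\tr\Hdot$ equals the scalar inside the bracket defining $M^H$, hence $M^H(\Hdot) = -\grad(\tr\Hdot)$ and $\laplacian(\tr\Hdot) = -\grad\cdot M^H$. Substituting, the genuinely linear term $\grad\cdot\Hdot$ is traded for a quadratic one and we arrive at the key identity
\[ \laplacian p = \grad\cdot\bigl(N^v - (c_1^2-2)M^H\bigr), \]
so that $\grad p = \grad\laplacian^{-1}\grad\cdot(N^v - (c_1^2-2)M^H)$, i.e. $\grad p$ is the image of the nonlinear forcing under $\grad\laplacian^{-1}\grad\cdot$, a matrix of homogeneous degree-zero Fourier multipliers (the complement of the Leray projection) bounded on $L^2(\rthree)$.

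Finally I would commute $\stwiddleoops{a}{\alpha}$ through this identity. The operators $\Upsilon$ commute with $\laplacian$ and with $\grad\cdot$ (in the vector-valued sense of (\ref{otwid3})), and the scaling commutators are $[\laplacian,\Stwiddle] = 2\laplacian$ and $[\grad\cdot,\Stwiddle] = \grad\cdot$, so $\laplacian\Stwiddle^a = (\Stwiddle+2)^a\laplacian$ and $(\Stwiddle+2)^a\grad\cdot = \grad\cdot(\Stwiddle+1)^a = \grad\cdot S^a$; the bookkeeping closes up cleanly precisely because $\Stwiddle+1 = S$, so every $\Stwiddle^a$ on the left becomes exactly $S^a$ on the right with no leftover lower-order terms. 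This yields $\laplacian(\stwiddleoops{a}{\alpha}p) = \grad\cdot\soops{a}{\alpha}(N^v - (c_1^2-2)M^H)$, whence $\grad\stwiddleoops{a}{\alpha}p = \grad\laplacian^{-1}\grad\cdot\soops{a}{\alpha}(N^v - (c_1^2-2)M^H)$, and the $L^2$ bound of $\grad\laplacian^{-1}\grad\cdot$ gives $\norm{\grad\stwiddleoops{a}{\alpha}p}\lesssim\norm{\soops{a}{\alpha}N^v}+\norm{\soops{a}{\alpha}M^H}$, the fixed constant $c_1^2-2$ being absorbed into $\lesssim$.

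The main obstacle is really the middle step: seeing, via (\ref{hdotconst3}) together with (\ref{hdotconst2}), that $\grad\cdot(\grad\cdot\Hdot) = -\grad\cdot M^H(\Hdot)$, so that the one linear term in (\ref{hdoteqc2b}) contributes to $\laplacian p$ only a quadratic expression. Without this cancellation $p$ would be controlled by $\Hdot$ itself rather than by nonlinearities, and the decay budget in the later energy estimates would be destroyed. The commutation of $\stwiddleoops{a}{\alpha}$ with the elliptic operators is routine given Section 3; the only point requiring a little care is that when $\alpha$ contains factors of $\grad$ one must use $\Upsilon^\alpha\grad\cdot = \grad\cdot\Upsilon^\alpha$ with the correct $\omegatwiddle$ acting on the vector slots, exactly as in (\ref{otwid3}).
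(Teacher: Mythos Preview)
Your proof is correct and follows essentially the same route as the paper: take the divergence of (\ref{hdoteqc2b}), use the constraints (\ref{hdotconst1})--(\ref{hdotconst3}) to see that every linear contribution (including the viscous term) drops out so that $\laplacian\stwiddleoops{a}{\alpha}p=\grad\cdot\soops{a}{\alpha}\Phi$ with $\Phi$ built from $N^v$ and $M^H$, and then read off the $L^2$ bound. The only cosmetic differences are that the paper applies $\soops{a}{\alpha}$ before taking divergence (you do it after and commute), and it closes with the integration-by-parts identity $\norm{\grad q}^2=\innerprod{\Phi}{\grad q}$ rather than invoking the Fourier multiplier $\grad\laplacian^{-1}\grad\cdot$; your coefficient $c_1^2-2$ is in fact the correct one.
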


\begin{proof} If we apply derivatives $\soops{a}{\alpha}$ to equation (\ref{hdoteqc2a})-(\ref{hdoteqc2b}) and rearrange we can write
\begin{align}
\grad \stwiddleoops{a}{\alpha} p &= \soops{a}{\alpha} \grad p \label{p1} \\
&= \soops{a}{\alpha} ( N^v(\Udot) +(1-c_1^2) M^H(\Hdot) -  \grad \cdot \Hdot +\visc \laplacian \vdot  -  \deet \vdot). \notag
\end{align}
We proceed by taking the divergence of the equation to obtain
\begin{align}
\laplacian \stwiddleoops{a}{\alpha} p = &  \grad  \cdot \soops{a}{\alpha} N^v(\Udot) +(1-c_1^2) \grad \cdot \soops{a}{\alpha} M^H(\Hdot)\label{p2} \\
&-  \grad \cdot (\stwiddleoops{a}{\alpha} \grad \cdot \Hdot) +\visc \grad \cdot \soops{a}{\alpha}  \laplacian \vdot  -  \grad \cdot \soops{a}{\alpha} \deet \vdot. \notag
\end{align}
Using the constraints (\ref{hdotconst1})-(\ref{hdotconst3}) we can simplify the last three terms:
\begin{align}
-  \grad \cdot (\stwiddleoops{a}{\alpha} \grad \cdot \Hdot) &= -  \deei \partial ^j \soops{a}{\alpha} \hdot{i}{j} \label{p3}\\
&=- \partial ^j \stwiddleoops{a}{\alpha} \deei \hdot{i}{j} \notag \\
&=- \partial ^j \stwiddleoops{a}{\alpha} \deej \tr \Hdot \notag \\
&=- \grad \cdot \stwiddleoops{a}{\alpha} M^H(\Hdot). \notag
\end{align}
and
\begin{align}
\visc \grad \cdot \soops{a}{\alpha} \laplacian \vdot  -  \grad \cdot \soops{a}{\alpha} \deet \vdot
= (\visc \stwiddleoops{a}{\alpha} \laplacian - \stwiddleoops{a}{\alpha} \deet)(\grad \cdot \vdot)=0. \label{p4}
\end{align}
So (\ref{p2}) simplifies to
\begin{align}
\laplacian \stwiddleoops{a}{\alpha} p = &  \grad  \cdot \soops{a}{\alpha} N^v(\Udot) -c_1^2 \grad \cdot \soops{a}{\alpha} M^H(\Hdot).\label{p5}
\end{align}
Using the notation $\Phi ^i =    \soops{a}{\alpha} (N^v)^i(\Udot) -c_1^2 \soops{a}{\alpha} (M^H)^i(\Hdot)$, we can estimate
\begin{align}
\norm{\grad \stwiddleoops{a}{\alpha} p}^2 &= -\innerprod{\laplacian \stwiddleoops{a}{\alpha} p}{ \stwiddleoops{a}{\alpha} p} \label{p6} \\
&=-\innerprod{\grad \cdot \Phi }{ \stwiddleoops{a}{\alpha} p} \notag \\
&=\innerprod{\Phi}{ \grad \stwiddleoops{a}{\alpha} p} \notag \\
&\leq \norm{\Phi} \cdot \norm{\grad \stwiddleoops{a}{\alpha} p} \notag
\end{align}
which implies the result.  \end{proof}

\section{Sobolev Inequalities}

\setcounter{equation}{0}
\renewcommand{\theequation}{8.\arabic{equation}}

We need several Sobolev inequalities beyond the standard embedding $H^2(\mathbf{R}^3) \hookrightarrow L^\infty (\mathbf{R}^3)$ to do our bootstrapping and energy estimates.

\vspace{.1in}

\begin{theorem} \label{sobolev} For $\ev{\projind}\in \mathbf{R}$, $\lambda\in [0,2]$, $f\in C^\infty _0 (\mathbf{R}^3)$, $r=|x|$, and $\rho = |y|$, we have the following:
\begin{align}
r|f(x)| \lesssim &\displaystyle{\sum_{|\alpha |\leq 1} \norm{\deerho \omegatwiddle ^\alpha f(y)}^{1/2}_{L^2(|y|\geq r)}
\times \sum_{|\alpha |\leq 2} \norm{ \omegatwiddle ^\alpha f(y)}^{1/2}_{L^2(|y|\geq r)}} \label{sob1} \\
\notag \\
r\langle \ev{\projind}t - r \rangle ^{1/2}|f(x)| \lesssim &\displaystyle{\sum_{|\alpha |\leq 1} \norm{\langle \ev{\projind}t - \rho \rangle\deerho \omegatwiddle ^\alpha f(y)}^{1/2}_{L^2(|y|\geq r)}} \label{sob2} \\
&\times \sum_{|\alpha |\leq 2} \norm{ \omegatwiddle ^\alpha f(y)}^{1/2}_{L^2(|y|\geq r)} \notag \\
\notag \\
|f(x)| \lesssim & \displaystyle{\sum_{|\alpha |\leq 1} \norm{\rho ^{-\lambda} \deerho \omegatwiddle ^\alpha f(y)}^{1/2}_{L^2(|y|\geq r)}} \label{sob3} \\
&\times \sum_{|\alpha |\leq 2} \norm{ \rho ^{\lambda - 2}\omegatwiddle ^\alpha f(y)}^{1/2}_{L^2(|y|\geq r)}. \notag
\end{align}

\end{theorem}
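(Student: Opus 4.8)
\textit{The plan} is to prove all three inequalities as pointwise bounds at a fixed point $x=r\omega$, $r=|x|$, $\omega\in S^2$, by the classical device (cf. \cite{KS1,ST2}) of interleaving the two-dimensional Sobolev embedding on the sphere with a one-dimensional weighted Sobolev (i.e.\ fundamental-theorem-of-calculus) estimate in the radial variable $\rho=|y|$. Two elementary facts are used repeatedly. First, for scalar $f$ the operator $\omegatwiddle^{\alpha}$ is just $\Omega^{\alpha}$ by (\ref{otwid4}), and $\{\Omega_1,\Omega_2,\Omega_3\}$ spans the tangent bundle of each sphere $\{|y|=\rho\}$, so the embedding $H^2(S^2)\hookrightarrow L^\infty(S^2)$ reads
\[
|f(\rho\omega)|^2\;\lesssim\;\sum_{|\beta|\le 2}\norm{\Omega^{\beta}f(\rho\cdot)}_{L^2(S^2)}^2,
\]
and likewise Agmon's inequality on $S^2$ gives $|f(\rho\omega)|^2\lesssim \norm{f(\rho\cdot)}_{L^2(S^2)}\norm{f(\rho\cdot)}_{H^2(S^2)}$. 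Second, in polar coordinates $dy=\rho^2\,d\rho\,d\omega$, so $\norm{g}_{L^2(|y|\ge r)}^2=\int_r^\infty\rho^2\norm{g(\rho\cdot)}_{L^2(S^2)}^2\,d\rho$, while $[\deerho,\Omega_i]=0$ and each $\Omega_i$ is skew-adjoint on $L^2(S^2)$.

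\textit{For (\ref{sob1})} I would feed the angular bound into the radial identity: for a nonnegative compactly supported radial profile $\phi(\rho)$ built from the $L^2(S^2)$-norm of $f$ and of its angular derivatives,
\[
r^2\phi(r)=-\int_r^\infty\deerho\!\big(\rho^2\phi(\rho)\big)\,d\rho=-\int_r^\infty\big(2\rho\,\phi(\rho)+\rho^2\phi'(\rho)\big)\,d\rho,
\]
and then discard the favorable term $-\int 2\rho\,\phi\le 0$, leaving $r^2\phi(r)\le\int_r^\infty\rho^2|\phi'(\rho)|\,d\rho$. Differentiating the relevant squared $L^2(S^2)$-norms produces $\int_{S^2}(\deerho\Omega^{\beta}f)(\Omega^{\beta}f)$, which after one Cauchy--Schwarz on $S^2$ and one in $\rho$ becomes a product of two $\norm{\cdot}_{L^2(|y|\ge r)}$ norms via the polar-coordinate identity; absorbing the bounded number of summands through $(\sum a_i^{1/2})^2\lesssim\sum a_i$ then gives the stated form. \textbf{The main obstacle} lives here: applied naively (two angular derivatives from the sphere embedding, one more radial derivative from the FTC step) this would charge three derivatives to one of the two factors, whereas (\ref{sob1}) permits at most two on each. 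Forcing the radial derivative to meet only factors carrying at most one angular derivative --- by integrating by parts on $S^2$ (using skew-adjointness of the $\Omega_i$ and that $\sum_i\Omega_i^2$ is the Laplace--Beltrami operator) and, where needed, in $\rho$ --- so that the remaining $\le 2$ angular derivatives land on $f$ itself, is exactly the Klainerman--Sideris bookkeeping of \cite{KS1,ST2} and is the only genuinely delicate step.

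\textit{For (\ref{sob2})} I would rerun the same argument with the slowly varying weight $\langle\ev{\projind}t-\rho\rangle$ inserted, i.e.\ apply the identity to $r^2\langle\ev{\projind}t-r\rangle\phi(r)=-\int_r^\infty\deerho\!\big(\rho^2\langle\ev{\projind}t-\rho\rangle\phi(\rho)\big)\,d\rho$. Differentiating the weight is harmless because $|\deerho\langle\ev{\projind}t-\rho\rangle|=|\rho-\ev{\projind}t|/\langle\ev{\projind}t-\rho\rangle\le 1$, so the term it produces is controlled by the lower-order pieces already on the right of (\ref{sob1}) together with the favorable $-\int 2\rho\langle\ev{\projind}t-\rho\rangle\phi$ term. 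The square-root exponents in (\ref{sob2}) are precisely what the Cauchy--Schwarz split in $\rho$ delivers, which is why the weight appears to the first power only on the $\deerho$-factor and the net weight $\langle\ev{\projind}t-r\rangle^{1/2}$ ends up on the left.

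\textit{For (\ref{sob3})}, since the conclusion carries no power of $r$, I would replace the $\rho^2$-weighted radial identity by the fundamental theorem of calculus applied to the \emph{un}weighted product of radial profiles and split the resulting radial integral with the conjugate weights $\rho^{-\lambda}$ and $\rho^{\lambda-2}$, whose product $\rho^{-2}$ exactly reconstitutes the $\rho^2$ of the volume element; the hypothesis $\lambda\in[0,2]$ is what keeps both weighted radial integrals finite, and this is the Hardy-type ingredient alluded to in the introduction. The $S^2$ embedding and the same derivative bookkeeping as for (\ref{sob1}) then close the argument. Routine points --- the envelope/Danskin justification for differentiating $L^\infty(S^2)$ profiles, or the alternative of performing the angular embedding before the radial step --- I would leave to the write-up.
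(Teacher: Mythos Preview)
Your overall strategy---angular Sobolev on $S^2$ combined with a radial fundamental-theorem-of-calculus step---is the right architecture, and for (\ref{sob1}) and (\ref{sob2}) the paper simply cites \cite{Sid1}, so there is nothing to compare there beyond your outline. The issue is the resolution you propose for the ``main obstacle'' of derivative counting, which you carry over to (\ref{sob3}) as well.

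Integration by parts on $S^2$ does \emph{not} fix the three-derivative problem in the way you describe. If you run FTC on $r^2\|f(r\cdot)\|_{H^2(S^2)}^2$ (or on an Agmon product), the worst term contains $\int_{S^2}(\deerho\Omega^{\beta}f)(\Omega^{\beta}f)$ with $|\beta|=2$. Moving an $\Omega_i$ by skew-adjointness shifts it to the other factor, producing $(\deerho\Omega^{\leq 1}f)(\Omega^{3}f)$, which is worse; there is no rearrangement that simultaneously keeps $\deerho$ paired with $\le 1$ angular derivative and leaves only $\le 2$ angular derivatives on the other side. The total order is conserved, and $\deerho$ does not move.

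The mechanism that actually achieves the derivative split---both in \cite{Sid1} for (\ref{sob1})--(\ref{sob2}) and in the paper's own proof of (\ref{sob3})---is different: one first passes from $L^\infty(S^2)$ to $L^4(S^2)$ via the isoperimetric Sobolev inequality $|f(x)|\lesssim\sum_{|\alpha|\le 1}\|\omegatwiddle^{\alpha}f(r\cdot)\|_{L^4(S^2)}$, which costs only one angular derivative. Then FTC is applied to $\int_{S^2}|g|^4$ and the resulting $|g|^3|\deerho g|$ is split by Cauchy--Schwarz into an $L^2$ piece (carrying $\deerho$) and an $L^6$ piece, and the latter is controlled through the Gagliardo--Nirenberg interpolation $\|g\|_{L^6(S^2)}\lesssim\|g\|_{H^1(S^2)}^{1/3}\|g\|_{L^4(S^2)}^{2/3}$, which injects only one further angular derivative and allows the $L^4$ norm to be absorbed. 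For (\ref{sob3}) the paper runs exactly this with the conjugate weights $\rho^{-\lambda}$, $\rho^{\lambda-2}$ inserted at the Cauchy--Schwarz step. So your weight-splitting idea for (\ref{sob3}) is right, but it has to be grafted onto the $L^4$--Gagliardo--Nirenberg route, not onto an $H^2(S^2)\hookrightarrow L^\infty$ approach.
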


\begin{proof}
Estimates (\ref{sob1}) and (\ref{sob2}) are proven in \cite{Sid1} Lemma 3.3.

To derive (\ref{sob3}) first
assume $g(x)\in C^\infty _0 (\mathbf{R}^3)$.  Then using Cauchy-Schwarz and calculus facts we estimate
\begin{align}
\int _{S^2} |g(r \omega)|^4 d\omega \lesssim & \int ^\infty _{r} \int_{S^2}  |\deerho g(\rho \omega ) | \cdot |g(\rho \omega )|^3 d\rho d\omega \label{sobproof1}
\\
\lesssim & \int _{\rho \geq r} \rho ^{-2} |\deerho g(y ) | \cdot |g(y )|^3 dy \notag
\\
\lesssim & \left ( \int _{\rho \geq r} \rho ^{-2\lambda} |\deerho g(y ) | ^2 dy \right )^{1/2}  \left ( \int _{\rho \geq r} \rho ^{2(\lambda -2)} |g(y )|^6 dy \right )^{1/2}. \notag
\end{align}
Using Gagliardo-Nirenberg, we have

\begin{equation}
\norm{g(r\omega)} _{L^6(S^2)} \lesssim \sum _{|\alpha |\leq 1}
\norm{\omegatwiddle ^\alpha g(r\omega)} _{L^2(S^2)} ^{1/3}
\norm{g(r\omega)} _{L^4(S^2)} ^{2/3} .
\label{sobproof2}
\end{equation}
so

\begin{align}
\int _{\rho \geq r} \rho ^{2(\lambda -2)} |g(y )|^6 dy =& \int ^\infty _{r} \int_{S^2} \rho ^{2(-2+\lambda)+2} |g(\rho \omega )|^6 d\omega d\rho \label{sobproof3}
\\
\lesssim & \int ^\infty _{r} \Big \{\rho ^{2(\lambda -2)+2}
\left ( \int _{S^2} |g(\rho \omega)|^4 d\omega \right ) \notag
\\
&\times \left ( \sum _{|\alpha |\leq 1} \int _{S^2} |\omegatwiddle ^\alpha g(\rho \omega)|^2 d\omega \right ) \Big \}d\rho \notag
\\
\lesssim & \sup _{\rho \geq r} \int _{S^2} |g(\rho \omega)|^4 d\omega \notag
\\
& \times \sum _{|\alpha |\leq 1} \int _{\rho \geq r} \rho ^{2(\lambda -2)} |\omegatwiddle ^\alpha g(y)|^2 dy. \notag
\end{align}
Putting (\ref{sobproof1}) together with (\ref{sobproof3}), we get

\begin{align}
\left ( \int _{S^2} |g(r \omega)|^4 d\omega \right )^{1/2} \lesssim & \left ( \int _{\rho \geq r} \rho ^{-2\lambda} |\deerho g(y ) | ^2 dy \right )^{1/2} \label{sobproof4}
\\
& \times \left (\sum _{|\alpha |\leq 1}  \int _{\rho \geq r} \rho ^{2(\lambda -2)} |\omegatwiddle ^\alpha g(y)|^2 dy \right ) ^{1/2}. \notag
\end{align}
Combining (\ref{sobproof4}) with the isoperimetric Sobolev inequality
\begin{equation}
|f(x)| \lesssim \sum _{|\alpha |\leq 1} \norm{\omegatwiddle ^\alpha f(r\omega)} _{L^4(S^2)}, \label{isosob}
\end{equation}
we complete the proof of (\ref{sob3}).  \end{proof}

\vspace{.2in}

\begin{lemma} \label{hardy} (Hardy) For $f\in C^\infty _0 (\mathbf{R}^3)$, $r=|x|$, and $\rho = |y|$,

\begin{equation}
\norm{\rho ^{-1} f(y)} \lesssim \norm{\deerho f(y)}. \label{hardy1}
\end{equation}
\end{lemma}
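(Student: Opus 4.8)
The plan is to reduce the three-dimensional inequality to the one-dimensional Hardy inequality by passing to polar coordinates. Write $y=\rho\omega$ with $\rho=|y|$ and $\omega\in S^2$, and recall that $\deerho=\frac{y}{\rho}\cdot\grad$, so that for each fixed $\omega$ one has $(\deerho f)(\rho\omega)=\frac{d}{d\rho}\bigl(f(\rho\omega)\bigr)$. Expanding both sides of (\ref{hardy1}) in spherical coordinates gives the identities
\begin{align*}
\norm{\rho^{-1}f(y)}^2 &= \int_{S^2}\int_0^\infty |f(\rho\omega)|^2\,d\rho\,d\omega, \\
\norm{\deerho f(y)}^2 &= \int_{S^2}\int_0^\infty |(\deerho f)(\rho\omega)|^2\,\rho^2\,d\rho\,d\omega,
\end{align*}
so it suffices to prove, for each fixed $\omega$ and $g(\rho):=f(\rho\omega)$, the one-variable bound $\int_0^\infty |g(\rho)|^2\,d\rho\lesssim\int_0^\infty |g'(\rho)|^2\rho^2\,d\rho$, and then integrate over $\omega\in S^2$.

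For the one-dimensional estimate I would integrate by parts in $\rho$:
\[
\int_0^\infty |g(\rho)|^2\,d\rho = \bigl[\rho|g(\rho)|^2\bigr]_0^\infty - 2\int_0^\infty \rho\,\mathrm{Re}\bigl(\overline{g(\rho)}\,g'(\rho)\bigr)\,d\rho .
\]
The boundary term vanishes at $\infty$ because $g$ is compactly supported, and at $0$ because $g$ is continuous there and hence $\rho|g(\rho)|^2\to 0$ as $\rho\to 0^+$. Bounding the remaining integral by Cauchy--Schwarz yields
\[
\int_0^\infty |g|^2\,d\rho \;\leq\; 2\Bigl(\int_0^\infty |g|^2\,d\rho\Bigr)^{1/2}\Bigl(\int_0^\infty \rho^2|g'|^2\,d\rho\Bigr)^{1/2},
\]
and dividing through by $\bigl(\int_0^\infty|g|^2\,d\rho\bigr)^{1/2}$ (harmless, since the inequality is trivial when this quantity vanishes) gives the claim with constant $2$.

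There is no real obstacle here; the only point requiring a moment's care is the vanishing of the boundary contribution at the origin, which is immediate once one observes that $f\in C_0^\infty(\rthree)$ forces $\rho\mapsto g(\rho)=f(\rho\omega)$ to be smooth, bounded, and compactly supported on $[0,\infty)$. Integrating the one-dimensional inequality over $\omega\in S^2$ and invoking the two displayed norm identities then produces (\ref{hardy1}).
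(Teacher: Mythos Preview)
Your proof is correct and follows essentially the same route as the paper: both pass to spherical coordinates, integrate by parts in $\rho$ against the factor $1=\deerho(\rho)$, and close with Cauchy--Schwarz to obtain the constant $2$. The only cosmetic difference is that you apply Cauchy--Schwarz ray-by-ray before integrating over $S^2$, whereas the paper converts back to a three-dimensional integral first and applies Cauchy--Schwarz there; the two orderings give the same bound.
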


\begin{proof}
The result is implied by the estimate
\begin{align}
\norm{\rho ^{-1} f(y)} ^2 =& \int \rho ^{-2} |f(y)|^2 dy \notag
\\
= &  \int_0^\infty \int _{S^2} |f(\rho \omega)|^2 d\omega d\rho \notag
\\
= &  \int_0^\infty \int _{S^2} \deerho (\rho) |f(\rho \omega)|^2 d\omega d\rho \notag
\\
\leq &  \int_0^\infty \int _{S^2} 2\rho |\deerho f(\rho \omega)|\cdot |f(\rho \omega)| d\omega d\rho \notag
\\
=& 2\int \rho ^{-1} |\deerho f(y)|\cdot |f(y)| dy  \notag
\\
\leq & 2 \norm{\deerho f(y)}\cdot \norm{\rho ^{-1} f(y)}. \notag
\end{align}  \end{proof}

\vspace{.2in}

\begin{corollary} \label{sobcor}
Let $\omega = \frac{x}{|x|}$, $r=|x|$, and  $\Udot=(\Hdot, \vdot)$ satisfy constraints (\ref{hdotconst1})-(\ref{hdotconst3}).  If
$$\E{a+|\alpha|+2}{a} [U],\Chi{a+|\alpha|+3}{a},\Psi_{a+|\alpha|+3,a}(\Udot)<\infty,$$
then

\begin{align}
\langle r \rangle |\soops{a}{\alpha} \Udot | \lesssim &
 \sum_{|\beta |\leq 2} \norm{ \soops{a}{\alpha +\beta} \Udot } \label{sob4} \\
\langle r \rangle \langle \ev{\projind}t - r \rangle ^{1/2} |\sproj{\projind} \soops{a}{\alpha} \Udot | \lesssim &
 \sum_{|\beta |\leq 2} \norm{ \soops{a}{\alpha +\beta} \Udot}  \label{sob5}\\
 &+ \Chi{a+|\alpha|+3}{a} \notag \\
 \etalinfnorm{ \ctwot \soops{a}{\alpha} \Udot } \lesssim &\sum_{|\beta |\leq 2} \norm{\soops{a}{\alpha + \beta} \Udot } \label{sob6} \\  &+ \Psi_{a+|\alpha|+3,a}(\Udot)\notag
\\
r^{3/2}| \omega \cdot (\soops{a}{\alpha}\Hdot \omega) | \lesssim & \sum_{|\beta |\leq 2} \norm{\soops{a}{\alpha + \beta} \Udot } \label{sob7} \\
&+ \sum_{|\beta |\leq 1} \norm{r \soops{a}{\alpha + \beta} M^H(\Hdot)} \notag \\
r^{3/2}| \omega \cdot (\soops{a}{\alpha}\vdot) | \lesssim & \sum_{|\beta |\leq 2} \norm{\soops{a}{\alpha + \beta} \Udot}, \label{sob8}
\\ \notag
\end{align}
where $\eta$ is defined as in (\ref{eta}), $E$ as in (\ref{energy}), $\mathcal{X}$ and $\Psi$ as in (\ref{chi}) and (\ref{psi}), and $M^H$ as in (\ref{hdoteqc2b}).
\end{corollary}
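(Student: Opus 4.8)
The plan is to prove all five bounds by a common two-region scheme. In the \emph{exterior} region $\{r\gtrsim\ctwot\}$, where $\langle r\rangle\sim r$, apply the weighted Sobolev inequalities of Theorem~\ref{sobolev} to the relevant scalar (or $\sproj{\projind}$-) component of $\soops{a}{\alpha}\Udot$; in the \emph{interior} region $\{r\lesssim\ctwot\}$, where $\eta\equiv1$ on the range relevant to $\Psi$, fall back on the embedding $H^2(\rthree)\hookrightarrow L^\infty(\rthree)$, upgraded by Gagliardo-Nirenberg whenever decay in $\ctwot$ is demanded. Throughout, $\deerho=\omega\cdot\grad$ and any $\omegatwiddle^\beta$ applied to $\soops{a}{\alpha}\Udot$ from the outside are rewritten as $\soops{a}{\alpha+\beta}\Udot$ modulo commutators (the commutator of two $\Upsilon$'s is an $\Upsilon$; $S$ commutes with $\omegatwiddle$ and with $\grad$ up to a shift), and since $\grad$ is one of the $\Upsilon$'s all the unweighted $L^2$ norms that occur are contained in $\sum_{|\beta|\le2}\norm{\soops{a}{\alpha+\beta}\Udot}$, which is finite by the standing hypothesis $\E{a+|\alpha|+2}{a}[U]<\infty$ together with (\ref{normen}).

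For (\ref{sob4}) I would apply (\ref{sob1}) to each component of $\soops{a}{\alpha}\Udot$: since $|\deerho\omegatwiddle^\beta f|\le|\grad\omegatwiddle^\beta f|$ and $\grad\omegatwiddle^\beta$ is a combination of at most two $\Upsilon$'s, the right side of (\ref{sob1}) is $\lesssim\sum_{|\beta|\le2}\norm{\soops{a}{\alpha+\beta}\Udot}$ for $r\gtrsim1$, while for $r\lesssim1$ the plain embedding gives the same and $\langle r\rangle$ absorbs $r$. For (\ref{sob5}) apply (\ref{sob2}) to $f=\sproj{\projind}\soops{a}{\alpha}\Udot$: the second factor is controlled because $\sproj{\projind}(\omega)$ is an orthogonal projection with bounded $S^2$-derivatives, and in the first factor one commutes $\sproj{\projind}$ back to the front, turning $\langle\ev{\projind}t-\rho\rangle\,\deerho\omegatwiddle^\beta\sproj{\projind}\soops{a}{\alpha}\Udot$ into $\langle\ev{\projind}t-\rho\rangle\,\sproj{\projind}\deej\soops{a}{\alpha+\beta}\Udot$ --- a summand of $\Chi{a+|\alpha|+3}{a}$ --- plus lower-order angular errors; this settles $r\gtrsim1$, and for $r\lesssim1$ one combines the $H^2$ embedding with the $\ctwot$-weighted $L^2$ information already inside $\Chi{a+|\alpha|+3}{a}$, using that in this region $\langle\ev{\projind}t-r\rangle\sim\ctwot$ when $\ev{\projind}=\pm1$ and $\sim1$ when $\ev{\projind}=0$.

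Estimate (\ref{sob6}) is the new ingredient flagged in the introduction, and I expect its proof to be the main technical point. On $\{r\gtrsim\ctwot\}$ one has $\ctwot\lesssim\langle r\rangle$, so (\ref{sob4}) already suffices. On $\{r\lesssim\ctwot\}$, where $\eta\equiv1$, I would apply the new inequality (\ref{sob3}) with $\lambda=1$ to an $\eta$-type cutoff of $\ctwot\,\soops{a}{\alpha}\Udot$ supported there; the weights $\rho^{-\lambda}=\rho^{\lambda-2}=\rho^{-1}$ appearing in both factors are then removed via the Hardy inequality (Lemma~\ref{hardy}) at the cost of one extra $\deerho$, bounded pointwise by $|\grad|$, and the surviving quantity --- $\ctwot\grad$ applied to $\soops{a}{\alpha}\Udot$ against the cutoff --- is exactly what $\Psi_{a+|\alpha|+3,a}(\Udot)$ controls, the cutoff-derivative terms carrying harmless factors $\ctwot^{-1}$. (Equivalently, Gagliardo-Nirenberg on the interior ball together with $\norm{\eta\ctwot\grad\soops{a}{\alpha}\Udot}\lesssim\Psi$ gives the same bound.) The delicate part is the weight bookkeeping: selecting $\lambda$, deploying Hardy, and matching the leftover term to $\Psi$ while keeping track of the nested cutoffs.

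Finally, (\ref{sob7})--(\ref{sob8}) concern the fully radial parts $\omega\cdot(\soops{a}{\alpha}\Hdot\,\omega)$ and $\omega\cdot\soops{a}{\alpha}\vdot$, which gain the extra half power $r^{3/2}$ from the constraints. For (\ref{sob8}) I would use that $\soops{a}{\alpha}\vdot$ is again divergence free --- the constraint (\ref{hdotconst1}) is preserved by $S$ and $\omegatwiddle$ --- so that $\deerho(\rho^{2}\,\omega\cdot\soops{a}{\alpha}\vdot)$ reduces to a pure angular ($\omegatwiddle$) derivative of $\soops{a}{\alpha}\vdot$; integrating along rays from infinity, then invoking the spherical isoperimetric Sobolev inequality (\ref{isosob}) together with a Cauchy-Schwarz in $\rho$ with weight $\rho^{3}$, yields the bound with right side $\sum_{|\beta|\le2}\norm{\soops{a}{\alpha+\beta}\Udot}$. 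For (\ref{sob7}) the same scheme applies to $\omega\cdot(\soops{a}{\alpha}\Hdot\,\omega)$, now using both the symmetry constraint (\ref{hdotconst3}) and its consequence $(M^H)^i=-\deek\hdot{k}{i}$ (coming from (\ref{hdotconst2})--(\ref{hdotconst3})): the radial derivative of the radial component becomes a combination of angular derivatives of $\soops{a}{\alpha}\Hdot$ and of $\soops{a}{\alpha+\beta}M^H(\Hdot)$ with $|\beta|\le1$, the latter producing the additional term $\sum_{|\beta|\le1}\norm{r\,\soops{a}{\alpha+\beta}M^H(\Hdot)}$. Throughout one must verify that applying $\soops{a}{\alpha}$ to the constraints produces only commutator terms of the permitted form --- routine but careful bookkeeping.
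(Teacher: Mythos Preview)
Your treatment of (\ref{sob4})--(\ref{sob6}) matches the paper's proof essentially line for line: (\ref{sob4}) and (\ref{sob5}) are quoted from \cite{Sid1}, and for (\ref{sob6}) the paper does exactly what you propose --- split according to $\eta$, apply (\ref{sob3}) with $\lambda=1$ to $f=\eta\ctwot\,\soops{a}{\alpha}\Udot$, remove the resulting $\rho^{-1}$ weights with Hardy (Lemma~\ref{hardy}), and observe that the surviving $\eta\ctwot\grad$-terms are precisely $\Psi_{a+|\alpha|+3,a}$ while the $\eta',\eta''$ terms come with factors $\ctwot^{-1}$; on the transition region $\{0\le\eta\le1\}$ one has $\ctwot\lesssim\langle r\rangle$ and (\ref{sob4}) finishes.

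For (\ref{sob7})--(\ref{sob8}) you have the right structural insight --- the constraints upgrade the decay of the radial component --- but your implementation diverges from the paper's and, as written, has a gap. Starting from $\deerho(\rho^{2}\,\omega\cdot\soops{a}{\alpha}\vdot)=\rho^{2}\,(\text{angular divergence})$ and integrating from infinity gives control of $r^{2}\,\omega\cdot\soops{a}{\alpha}\vdot$, not $r^{3/2}$; any Cauchy--Schwarz splitting $\int_r^\infty \rho^{2}h\,d\rho\le(\int_r^\infty \rho^{p}\,d\rho)^{1/2}(\int_r^\infty \rho^{4-p}h^{2}\,d\rho)^{1/2}$ needs $p<-1$ for convergence, which forces more than $\rho^{5}$ on $h^{2}$ and does not close with only an $L^2$ budget. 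The paper instead applies (\ref{sob3}) a second time, now with $\lambda=\tfrac12$ and $f=r^{3/2}\,\omega\cdot g$: the weight $\rho^{\lambda-2}=\rho^{-3/2}$ exactly cancels $r^{3/2}$ in the undifferentiated factor, while $\rho^{-\lambda}\deerho(\rho^{3/2}\omega\cdot g)=\tfrac32\,\omega\cdot g+\rho\,\omega^{i}\deer g^{i}$. The identity (\ref{polargrad}) then converts $\rho\,\omega^{i}\deer g^{i}$ into $\rho\,\deei g^{i}$ plus an angular term bounded by $\sum_{|\beta|\le2}\norm{\omegatwiddle^{\beta}g}$, and \emph{this} is where the constraints enter: for $g=\soops{a}{\alpha}\vdot$ one has $\deei g^{i}=0$ by (\ref{hdotconst1}), giving (\ref{sob8}); for $g=\soops{a}{\alpha}\Hdot\,\omega$ the symmetry (\ref{hdotconst3}) and trace constraint (\ref{hdotconst2}) turn $\deei g^{i}$ into a term of size $r^{-1}|\grad\Hdot|+|M^H|$, producing the extra $\norm{r\,\soops{a}{\alpha+\beta}M^H(\Hdot)}$ in (\ref{sob7}). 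So the paper's route is: reuse (\ref{sob3}) with the other endpoint $\lambda=\tfrac12$, then invoke (\ref{polargrad}) to expose the divergence. This avoids the weight mismatch in your ray-integration scheme.
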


\begin{proof}
Inequalities (\ref{sob4}) and (\ref{sob5}) are proven in \cite{Sid1} Proposition 3.3.  To prove (\ref{sob6}) notice that on supp$~\eta$ we have
\begin{equation}
| \ctwot \soops{a}{\alpha} \Udot|
\leq  |\eta \ctwot \soops{a}{\alpha} \Udot| + \norm{\ctwot \soops{a}{\alpha} \Udot}_{L^{\infty}(0\leq \eta \leq 1)}. \label{sob9}
\end{equation}
Applying (\ref{sob3}) to the first term on the right with $\lambda =1$ and $f=\eta \ctwot \soops{a}{\alpha} \Udot$ gives
\begin{align}
|\eta \ctwot \soops{a}{\alpha} \Udot|
\lesssim & \displaystyle{\sum_{|\beta |\leq 1} \norm{\rho ^{-1} \deerho \omegatwiddle ^\beta (\eta \ctwot \soops{a}{\alpha} \Udot)}^{1/2}}  \\
&\times \sum_{|\beta |\leq 2} \norm{ \rho ^{-1}\omegatwiddle ^\beta (\eta \ctwot \soops{a}{\alpha} \Udot)}^{1/2}. \notag
\end{align}
Applying (\ref{hardy1}) to both terms on the right and using Young's inequality we have
\begin{align}
|\eta \ctwot \soops{a}{\alpha} \Udot| \lesssim & \displaystyle{\sum_{|\beta |\leq 1} \norm{ \deerho^2 \omegatwiddle ^\beta (\eta \ctwot \soops{a}{\alpha} \Udot)}} \label{sob10}
\\
&+ \sum_{|\beta |\leq 2} \norm{ \deerho \omegatwiddle ^\beta (\eta \ctwot \soops{a}{\alpha} \Udot)}. \notag
\end{align}
Using the fact that $\omegatwiddle \eta = \eta \omegatwiddle$ we can bound the first term on the right
\begin{align}
\sum_{|\beta |\leq 1} &\norm{ \deerho^2 \omegatwiddle ^\beta (\eta \ctwot \soops{a}{\alpha} \Udot)}
\\
\lesssim & \sum_{|\beta |\leq 1} (\norm{\deerho(\eta ' \soops{a}{\alpha+\beta} \Udot)} + \norm{\deerho(\eta \ctwot \deerho \soops{a}{\alpha+\beta} \Udot)}) \notag
\\
\lesssim & \sum_{|\beta |\leq 1} (\norm{\ctwot^{-1} \eta '' \soops{a}{\alpha+\beta} \Udot} + \norm{\eta '  \grad \soops{a}{\alpha+\beta} \Udot} \notag
\\
&+ \norm{\eta \ctwot \grad ^2 \soops{a}{\alpha+\beta} \Udot}) \notag
\\
\lesssim & \sum_{|\beta |\leq 2} \norm{\soops{a}{\alpha + \beta} \Udot }  + \Psi_{a+|\alpha|+3,a}(\Udot). \notag
\end{align}
The second term on the right of (\ref{sob10}) is bounded similarly.  To complete the proof of (\ref{sob6}) we use $\ctwot \lesssim \langle r \rangle$ on $\{ 0\leq \eta \leq 1 \}$ and (\ref{sob4}) on the second term on the right in (\ref{sob9}) to obtain
\begin{align}
\norm{\ctwot \soops{a}{\alpha} \Udot}_{L^{\infty}(0\leq \eta \leq 1)} \lesssim \norm{\langle r \rangle \soops{a}{\alpha} \Udot}_{L^{\infty}} \lesssim \sum_{|\beta |\leq 2} \norm{\soops{a}{\alpha + \beta} \Udot }.
\end{align}

Proving (\ref{sob7}) and (\ref{sob8}) begins with applying (\ref{sob3}) with $\lambda =\frac{1}{2}$ and $f=r^{3/2} \omega \cdot g$ to get
\begin{align}
r^{3/2} |\omega \cdot g| \lesssim & \displaystyle{\sum_{|\alpha |\leq 1} \norm{\rho ^{-1/2} \deerho \omegatwiddle ^\alpha (\rho^{3/2} \omega \cdot g) }^{1/2}_{L^2(|y|\geq r)}} \label{sob11}
\\
&\times \sum_{|\alpha |\leq 2} \norm{ \rho ^{-3/2}\omegatwiddle ^\alpha (\rho^{3/2} \omega \cdot g)}^{1/2}_{L^2(|y|\geq r)} \notag
\\
\lesssim & \displaystyle{\sum_{|\alpha |\leq 1} \norm{\rho ^{-1/2} \deerho (\rho^{3/2}  \omega \cdot \omegatwiddle ^\alpha g) }} \notag
\\
&+\sum_{|\alpha |\leq 2} \norm{ \omega \cdot \omegatwiddle ^\alpha   g} \notag
\\
\lesssim & \displaystyle{\sum_{|\alpha |\leq 1} \norm{\rho    \omega^i \deerho  \omegatwiddle ^\alpha g^i }} +\sum_{|\alpha |\leq 2} \norm{ \omegatwiddle ^\alpha   g}. \notag
\end{align}
Using the identity (\ref{polargrad}) we can estimate the first term on the right
\begin{align}
\displaystyle{\sum_{|\alpha |\leq 1} \norm{r    \omega^i \deer  \omegatwiddle ^\alpha g^i }} = & \displaystyle{\sum_{|\alpha |\leq 1} \norm{r    (\deei + r^{-1}(\Omega \wedge \omega)_i)  \omegatwiddle ^\alpha g^i }} \label{sob12}
\\
\lesssim & \displaystyle{\sum_{|\alpha |\leq 1} \norm{r \omegatwiddle ^\alpha \deei g^i }} + \sum_{|\alpha |\leq 2} \norm{ \omegatwiddle ^\alpha   g} \notag
\end{align}
In light of (\ref{sob11}), taking $g=\soops{a}{\alpha} \vdot$ yields (\ref{sob8}) after applying (\ref{hdotconst1}) while taking $g=\soops{a}{\alpha}H\omega$ and applying (\ref{hdotconst2}) and (\ref{hdotconst3}) gives
\begin{align}
\sum_{|\beta |\leq 1} \norm{r \omegatwiddle ^\beta \deei (\soops{a}{\alpha} \hdot{i}{j} \omega^j) } &\lesssim \sum_{|\beta |\leq 1} \big ( \norm{ \grad \omegatwiddle ^\beta \soops{a}{\alpha} \Hdot } + \norm{r \omegatwiddle ^\beta \deei \soops{a}{\alpha} \hdot{i}{j} \omega^j } \big )
\\
&\lesssim \sum_{|\beta |\leq 1} \big ( \norm{ \grad \omegatwiddle ^\beta \soops{a}{\alpha} \Hdot } + \norm{r \omegatwiddle ^\beta \stwiddleoops{a}{\alpha} M^H(\Hdot) } \big ), \notag
\end{align}
which implies (\ref{sob7}).   \end{proof}

We remark that the constraints (\ref{hdotconst1})-(\ref{hdotconst3}) are only necessary to prove (\ref{sob7}) and (\ref{sob8}).

\section{Bootstrapping and the Energy Identity}

\setcounter{equation}{0}
\renewcommand{\theequation}{9.\arabic{equation}}

Before we can begin bootstrapping we need a technical lemma to deal with terms where multiple derivatives fall on the elasticity tensor $\hat{A}$.  A similar lemma appeared in \cite{ST1}.

\begin{lemma} \label{ahatlemma}
Suppose $U\in H^{\sigma , \theta} _\Gamma$ with $\sigma \geq 3$.  Set $\sigma '=[\sigma /2]+2$.  Suppose $\E{\sigma '}{\sigma '} [U] <1$ and $|\Udot| \leq \delta$ for all $t\in\mathbf{R}^+$, with $\delta$ sufficiently small.  If $d$ is any positive integer then for $f:(\mathbf{R}^3 \otimes \mathbf{R}^3 )\times \mathbf{R}^3 \rightarrow \mathbf{R}^d$ satisfying $|f(\Udot)| = O(|\Udot|^p)$ at the origin we have the pointwise estimate

$$|\soops{b}{\beta} f(\Udot (t,x) )| \lesssim \sum_{{b_1+...+b_p \leq b}\atop{|\beta_1|+...+|\beta_p|\leq |\beta| }} |\soops{b_1}{\beta_1} \Udot (t,x)| \cdot \cdot \cdot |\soops{b_p}{\beta_p}\Udot (t,x)|$$
for $b+|\beta| \leq \sigma, |\beta| \leq \theta$.
\end{lemma}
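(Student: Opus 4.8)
The plan is to argue by induction on the total order $k = b + |\beta|$, using the generalized Leibniz rule for the vector fields $S$, $\Upsilon$ together with the chain rule applied to the composite function $f(\Udot(t,x))$. First I would record the basic facts: each $\soops{1}{}$ (that is, each $S$ or each $\Upsilon_i$) is a first-order differential operator, so applying $\soops{b}{\beta}$ to $f(\Udot)$ and expanding by the chain and product rules produces a finite sum of terms of the form
\[
\frac{\partial^q f}{\partial \Udot^{q}}(\Udot)\,\bigl(\soops{b_1}{\beta_1}\Udot\bigr)\cdots\bigl(\soops{b_q}{\beta_q}\Udot\bigr),
\qquad 1\le q\le k,
\]
where $b_1+\cdots+b_q\le b$ and $|\beta_1|+\cdots+|\beta_q|\le|\beta|$ and each $\soops{b_i}{\beta_i}\Udot$ carries at least one vector field (i.e. $b_i+|\beta_i|\ge 1$). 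This combinatorial expansion is standard (it is the Faà di Bruno formula for these vector fields) and I would simply cite the analogous lemma in \cite{ST1}; the only new point is bookkeeping that $S$ and $\Upsilon$ obey the same Leibniz/chain rules, which is immediate since they are first-order.

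Next I would handle the coefficient $\frac{\partial^q f}{\partial \Udot^q}(\Udot)$. Because $|f(\Udot)| = O(|\Udot|^p)$ near the origin, Taylor's theorem gives $\bigl|\frac{\partial^q f}{\partial \Udot^q}(\Udot)\bigr| \lesssim |\Udot|^{(p-q)_+}$ for $\Udot$ in the ball $|\Udot|\le\delta$, with $\delta$ small; in particular for $q \le p$ the coefficient is $O(|\Udot|^{p-q})$ and for $q > p$ it is simply bounded. I would then absorb the extra factors of $|\Udot|$: since $\E{\sigma'}{\sigma'}[U]<1$ and $\sigma' = [\sigma/2]+2$, the Sobolev embedding $H^2(\rthree)\hookrightarrow L^\infty$ (used in the form (\ref{normen}) / Corollary \ref{sobcor}) bounds $\linfnorm{\soops{b_i}{\beta_i}\Udot}$ for all the "spare" low-order factors uniformly, so each surplus factor $|\Udot|$ can be replaced by $1$ (or folded into one of the listed $\soops{b_i}{\beta_i}\Udot$ factors), at the cost of the suppressed constant. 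This reduces every term in the expansion to a product of at most $p$ genuine factors $\soops{b_i}{\beta_i}\Udot$ with the index budgets $\sum b_i \le b$, $\sum |\beta_i| \le |\beta|$, which is exactly the claimed form — noting that if fewer than $p$ factors appear we pad with $\linfnorm{\Udot}\lesssim 1$ factors, again permissible since $|\Udot|\le\delta$.

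The main obstacle — really the only subtlety — is the interplay between the scaling operator $S$ and the parabolic term, i.e. making sure the chain-rule expansion is still literally valid when we write $\soops{b}{\beta} = \Stwiddle$-type combinations rather than pure $S^b$; but since the statement is a \emph{pointwise} estimate on $\soops{b}{\beta}f(\Udot)$ and does not involve $L$ or $\laplacian$ at all, the non-commutation identities (\ref{scommute})--(\ref{sncommute}) never enter: $\soops{b}{\beta}$ here is just a fixed composition of first-order operators and the chain rule applies verbatim. The other place to be slightly careful is the count $\sigma' = [\sigma/2]+2$: one must check that every "spare" factor that gets estimated in $L^\infty$ has order $\le \sigma'$, which follows because in any product of $q\ge 2$ factors with total order $\le k \le \sigma$, all but one factor has order $\le \lfloor k/2\rfloor \le \lfloor \sigma/2\rfloor \le \sigma' - 2$, so its $L^\infty$ norm is controlled by $\E{\sigma'}{\sigma'}[U] < 1$ via the Sobolev embedding. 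Assembling these observations completes the induction and hence the proof.
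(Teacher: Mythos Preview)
Your proposal is correct and follows essentially the same route as the paper's proof: expand $\soops{b}{\beta}f(\Udot)$ by the chain rule (Fa\`a di Bruno), bound $|f^{(q)}(\Udot)|\lesssim |\Udot|^{(p-q)_+}$ for $|\Udot|$ small, and use the Sobolev embedding together with $\E{\sigma'}{\sigma'}[U]<1$ to dispose of the surplus low-order factors, noting that at most one factor in each product can have order exceeding $[\sigma/2]$. Two minor remarks: the ``induction on $k$'' you announce at the outset is never actually used---the argument is direct once the Fa\`a di Bruno expansion is written down, just as in the paper; and your digression about $S$ versus $\laplacian$ is correctly identified as irrelevant here, so you could simply omit it.
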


\begin{proof}
Using the chain rule, write

\begin{equation} \label{chainf} \soops{b}{\beta} [f(\Udot)] = \sum_{{j\leq b}\atop{k\leq|\beta|}} \sum_{{b_1+...+b_{j+k} = b}\atop{\beta_1+...+\beta_{j+k}=\beta}} f^{(j+k)}(\Udot)\soops{b_1}{\beta_1} \Udot \cdot \cdot \cdot \soops{b_{j+k}}{\beta_{j+k}}\Udot \end{equation}
where $f^{(n)}$ denotes the $n$th derivative of $f$ with respect to $\Udot$. At most one derivative may exceed order $[\sigma /2]$ because $|\beta|+b \leq \sigma$.  By the standard Sobolev embedding, commutation properties and (\ref{normen}), we have

$$|\soops{c}{\varsigma} \Udot| \lesssim \norm{\grad^2 \soops{c}{\varsigma} \Udot} \lesssim \norm{(S+2)^c\grad^2 \Upsilon^\varsigma \Udot}  \lesssim  \E{\sigma '}{\sigma '} [U] \lesssim 1$$
whenever $c+|\varsigma| \leq [\sigma /2]$.  By the mean value theorem,

$$|f^{(j+k)}(\Udot)| \lesssim |\Udot|^{p-j-k}, ~j+k\leq p$$
for $|\Udot| \leq 1$.  Our result now follows from (\ref{chainf}).  \end{proof}

In order to make use of our local energy decay estimates we use our assumptions of the smallness of the initial data and the hyperbolic nature of the system to bound our weighted norms $\Xi _{\sigma,\theta}$,$\Psi _{\sigma,\theta}$, and $\mathcal{X}_{\sigma,\theta}$ defined in (\ref{chi})-(\ref{psi})  by the energy.  We accomplish this by bootstrapping the nonlinearity.

\begin{lemma}
\label{bootstrap}
Fix $\kappa\geq 15$ and $\mu=\kappa-4$.  Suppose $\kappa - 1 \geq \sigma \geq 1$, $\mu \geq \theta \geq 1$ and $\Udot (t,x)$ solves equations (\ref{hdoteqc2a})-(\ref{hdoteqc2b}) and satisfies constraints (\ref{hdotconst1})-(\ref{hdotconst3}).  As long as $\E{\mu}{\mu}^{1/2}[U]$ is sufficiently small
\begin{align}
\Xi _{\sigma,\theta}(\Udot) \lesssim \E{\sigma+1}{\theta+1}^{1/2}[U] \label{bootlem1} \\
\notag \\
\Psi _{\sigma,\theta}(\Udot) \lesssim \E{\sigma+1}{\theta+1}^{1/2}[U] \label{bootlem2} \\
\notag \\
\Chi{\sigma}{\theta} \lesssim \E{\sigma+1}{\theta+1}^{1/2}[U]  \label{bootlem3} \\ \notag
\end{align}
each hold whenever the quantities on the right make sense.
\end{lemma}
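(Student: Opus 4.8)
The plan is to close the three estimates simultaneously by a single induction on $\sigma$, using the local energy decay estimates of Theorem \ref{ledthm} together with the pressure bound of Lemma \ref{pressure} and the pointwise nonlinearity estimate of Lemma \ref{ahatlemma}. The key point is that the right‑hand sides of (\ref{intled}), (\ref{extled}) (rewritten as (\ref{extled1})) contain two kinds of terms: ``good'' linear terms that are directly controlled by $\E{\sigma+1}{\theta+1}^{1/2}[U]$ — namely $\norm{\stwiddleoops{a+1}{\alpha}\Udot}$, $\norm{\grad\stwiddleoops{a}{\alpha}\Udot}$, $\norm{\stwiddleoops{a}{\alpha}\Udot}$, $\norm{\omegatwiddle\stwiddleoops{a}{\alpha}\Hdot}$ (all of which are $\lesssim\E{\sigma+1}{\theta+1}^{1/2}[U]$ since $|\alpha|+a\le\sigma-1$ in the definitions (\ref{chi})--(\ref{psi})), and an extra $\visc$‑weighted term $\visc\norm{\grad\stwiddleoops{a+1}{\alpha}\Hdot}$ which is also $\lesssim\E{\sigma+1}{\theta+1}^{1/2}[U]$ since $a+1+|\alpha|\le\sigma\le\kappa-1$ and $\visc$ appears with the good sign in the energy estimate — and ``nonlinear'' terms of the form $\norm{\eta t\soops{a}{\alpha}f}$, $\norm{\eta t\soops{a}{\alpha}g}$, $\visc\norm{\eta t\grad\cdot\soops{a}{\alpha}f}$, plus the pressure $\grad p$ hidden inside $g$. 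Here $f=N^H(\Udot)$ and $g=N^v(\Udot)-(c_1^2-1)M^H(\Hdot)-\grad p$, and after applying Lemma \ref{pressure} the pressure contributes only more copies of $N^v$ and $M^H$.

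First I would set up the induction: for $\sigma=1$ there are no $\Stwiddle$ powers in the weighted norms (the sum in (\ref{chi})--(\ref{psi}) is over $|\alpha|+a\le 0$), and the final terms in (\ref{int11}), (\ref{ext12}) involving $\sum_{j=0}^{a-1}$ are empty, so the estimates reduce to bounding $\norm{\eta t\,g}$, $\norm{\eta t\,f}$, etc. For the inductive step, assume (\ref{bootlem1})--(\ref{bootlem3}) hold with $\sigma$ replaced by any value $<\sigma$; the induction already built into the proof of Theorem \ref{ledthm} reduces matters to estimating the nonlinear forcing terms with $a+|\alpha|\le\sigma-1$, $a\le\theta$. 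The crucial estimate is of the schematic form
\begin{equation*}
\norm{\ctwot\,\soops{a}{\alpha}N(\Udot)}\lesssim\E{\mu}{\mu}^{1/2}[U]\cdot\Big(\Xi_{\sigma,\theta}(\Udot)+\Psi_{\sigma,\theta}(\Udot)+\Chi{\sigma}{\theta}+\E{\sigma+1}{\theta+1}^{1/2}[U]\Big),
\end{equation*}
obtained as follows: $N$ is quadratic in $\Udot$ and $\grad\Udot$, so by Lemma \ref{ahatlemma} each $\soops{a}{\alpha}N$ splits into a product $\soops{b_1}{\beta_1}(\text{factor}_1)\cdot\soops{b_2}{\beta_2}(\text{factor}_2)$ with $b_1+b_2\le a$, $|\beta_1|+|\beta_2|\le|\alpha|$; since $a+|\alpha|\le\sigma-1\le\kappa-2$, at least one factor has total order $\le[\sigma/2]+O(1)\le\mu$ and is placed in $L^\infty$ via the Sobolev inequalities (\ref{sob4}), (\ref{sob6}) of Corollary \ref{sobcor}, while the other is placed in $L^2$. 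The $L^\infty$ factor carries the weight: on $\mathrm{supp}\,\eta$ one uses (\ref{sob6}) so that $\linfnorm{\ctwot\,\soops{b_1}{\beta_1}\Udot}\lesssim\E{\mu}{\mu}^{1/2}[U]+\Psi_{\mu+3,\mu}(\Udot)$ — here the null structure is not yet needed, only the $\ctwot$ decay; on $\mathrm{supp}\,\gamma$ one uses (\ref{sob5}) together with the decomposition of $N$ via the spectral projections $\sproj{\pm},\sproj{0}$ and the null condition (\ref{ncproj}), which ensures that the dangerous quadratic combinations come paired with a factor of $\gamma\langle\ev{\projind}t-r\rangle$ producing $\Xi_{\sigma,\theta}$ or $\Chi{\sigma}{\theta}$; the $t$ weight in $\norm{\eta t\cdots}$ is absorbed by the $\ctwot^{-1}$ gained from the $L^\infty$ Sobolev bound. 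The terms $M^H(\Hdot)$ are cubic, hence strictly better, and the extra $\visc\norm{\eta t\,\grad\cdot\soops{a}{\alpha}f}$ term is handled identically since $\grad\cdot N^H$ is still quadratic in $\Udot,\grad\Udot$ with one more derivative, kept within range because $\sigma\le\kappa-1$ and $\mu=\kappa-4$ leaves enough room. Finally one absorbs the occurrences of $\Xi_{\sigma,\theta}+\Psi_{\sigma,\theta}+\Chi{\sigma}{\theta}$ appearing on the right into the left side, using that $\E{\mu}{\mu}^{1/2}[U]$ is assumed small; this is legitimate because all three quantities are finite by hypothesis ``whenever the quantities on the right make sense.''

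The main obstacle will be the bookkeeping in the $L^\infty$--$L^2$ split: one must check that in every product arising from Lemma \ref{ahatlemma} the low‑order factor genuinely satisfies $b_1+|\beta_1|+3\le\mu+3$ (so Corollary \ref{sobcor} applies and produces only $\Psi_{\mu+3,\mu}$, $\Chi{\mu+3}{\mu}$, $\E{\mu+2}{\mu}^{1/2}[U]$, all bounded since $\mu+3=\kappa-1$ and $\E{\mu}{\mu}^{1/2}[U]$ small), while the high‑order factor together with its $L^2$ norm stays within $\E{\sigma+1}{\theta+1}^{1/2}[U]$ — the index $\sigma+1$ rather than $\sigma$ being exactly what accommodates the single extra derivative from $\grad\Udot$ inside $N$. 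One also needs the commutation identity (\ref{polargrad}) to convert $\grad$ on the $L^\infty$ factor into $\deer$ plus angular derivatives when matching against the $\eta\ctwot\grad$ structure of $\Psi$, and (\ref{sncommute}) to control the interior ``extra'' $\Stwiddle^j$ sum — but both of these are already packaged into the statements we are allowed to cite. I expect no term to genuinely resist this scheme provided $\kappa\ge15$; the inequality $\kappa-1\ge\sigma$ together with $\mu=\kappa-4$ is precisely the margin that makes the derivative count work.
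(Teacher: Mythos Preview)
Your overall plan is sound and tracks the paper's proof closely: invoke Theorem \ref{ledthm} and Lemma \ref{pressure} to reduce matters to bounding $t\norm{\soops{a}{\alpha}N(\Udot)}$, $t\norm{\soops{a}{\alpha}M^H(\Hdot)}$, and the extra interior term $\norm{\eta t\,\grad\cdot\soops{a}{\alpha}N^H}$; then use Lemma \ref{ahatlemma} and a high/low split to place the low factor in $L^\infty$ via Corollary \ref{sobcor} and absorb the resulting $\epsilon\Psi_{\sigma,\theta}$ into the left. The paper does essentially this, organized as a two-stage argument (first $\sigma\le\mu-1$, then $\mu\le\sigma\le\kappa-1$) rather than formal induction on $\sigma$, but these are equivalent.

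There is, however, one substantive difference. On $\mathrm{supp}\,\gamma$ you propose to decompose $N$ via the spectral projections and invoke the null condition (\ref{ncproj}) to produce a $\Xi_{\sigma,\theta}$ or $\Chi{\sigma}{\theta}$ factor. The paper does \emph{not} use the null condition anywhere in this lemma. On $\mathrm{supp}\,\gamma$ one has $t\lesssim r$, so the weight $t$ is simply traded for $\langle r\rangle$ and absorbed by (\ref{sob4}):
\[
\norm{\gamma t\,|\soops{c}{\varsigma}\Udot|\cdot|\grad\stwiddleoops{d}{\vartheta}\Udot|}\lesssim\linfnorm{\langle r\rangle\soops{c}{\varsigma}\Udot}\norm{\grad\stwiddleoops{d}{\vartheta}\Udot}\lesssim\E{\mu}{\mu}^{1/2}[U]\,\E{\sigma}{\theta}^{1/2}[U].
\]
No projection, no null structure. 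This matters because not all of $N$ satisfies the null condition: the convective pieces $\vdot\cdot\grad\Hdot$, $\vdot\cdot\grad\vdot$ do not, and in Section 10 they are handled by a separate mechanism (the constraint-based estimates (\ref{sob7})--(\ref{sob8})). If you actually tried to push the null-condition route here you would get stuck on those terms. The null condition is reserved for the low-energy estimate in Section 10, where genuine $\langle t\rangle^{-3/2}$ decay is required; here only $\langle t\rangle^{-1}$ is needed, and the exterior geometry $t\sim r$ supplies it for free.

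Two minor corrections: $M^H(\Hdot)$ is quadratic (plus a cubic $\det\Hdot$ piece), not cubic, so it is not ``strictly better'' --- it is handled exactly like $N$. And in the interior term $\norm{\eta t\,\grad\cdot\soops{a}{\alpha}N^H}$ there is an exceptional case (all derivatives on the $\grad^2$ factor) where you cannot put the weight on the high-derivative factor; the paper isolates this case and uses (\ref{sob6}) on the undifferentiated factor instead, picking up a $\Psi_{3,0}$ term controlled by the already-established low-level bound. You alluded to this bookkeeping but did not flag the exceptional case explicitly.
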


\begin{proof} Combining Theorem \ref{ledthm}, Lemma \ref{pressure} and equations (\ref{hdoteqc2a})-(\ref{hdoteqc2b}) gives us
\begin{align}
\Xi _{\sigma,\theta}(\Udot) = & \!
\sum_\projind \sum _j \sum _{|\alpha|+a \leq \sigma -1 \atop a\leq \theta}  \{ \norm {\gamma \langle \ev{\projind}t - r \rangle \sproj{\projind} \deej \stwiddleoops{a}{\alpha} \Udot }  \label{boot1} \\
 \lesssim & \sum _{|\alpha|+a \leq \sigma -1 \atop a\leq \theta}
\{ \norm{ \omegatwiddle \stwiddleoops {a}{\alpha} \Udot}
+ \norm{ \stwiddleoops {a+1}{\alpha} \Udot}
+ \norm{\grad \stwiddleoops {a}{\alpha} \vdot}  \notag \\
&  + \norm{ \stwiddleoops {a}{\alpha} \vdot}
+ \norm{ \gamma t \soops {a}{\alpha} N^H (\Hdot, \vdot)} \notag \\
& + \norm{ \gamma t \soops {a}{\alpha} (N^v (\Hdot, \vdot)-(c_1^2-1)M^H(\Hdot) - \grad p)} \}  \notag \\
\lesssim & \sum _{|\alpha|+a \leq \sigma \atop a\leq \theta+1} \norm{\stwiddleoops{a}{\alpha} \Udot} \notag \\
& +\sum _{|\alpha|+a \leq \sigma -1 \atop a\leq \theta} t \{\norm{\soops{a}{\alpha} N(\Udot)} +\norm{\soops{a}{\alpha} M^H(\Hdot)} \} \notag
\end{align}
and
\begin{align}
\Psi _{\sigma,\theta}(\Udot) =& \displaystyle{\sum _{
|\alpha|+a \leq \sigma -1 \atop a\leq \theta
}}
\norm{\eta \ctwot  \grad \stwiddleoops{a}{\alpha} \Udot} \label{boot2} \\
\lesssim & \sum _{|\alpha|+a \leq \sigma -1 \atop a\leq \theta} \{ \norm{ \grad \stwiddleoops {a+1}{\alpha} \Hdot}
+ \norm{ \stwiddleoops {a+1}{\alpha} \Udot}
+ \norm{\grad \stwiddleoops {a}{\alpha} \Udot}  \notag \\
& + \norm{ \stwiddleoops {a}{\alpha} \Udot}
+ \norm{ \eta t \soops {a}{\alpha} N^H (\Hdot, \vdot)} \notag \\
&+ \norm{ \eta t \soops {a}{\alpha} (N^v (\Hdot, \vdot)-(c_1^2-1)M^H(\Hdot) - \grad p)} \notag \\
&+ \norm{ \eta t \grad \cdot \soops {a}{\alpha} N^H (\Hdot, \vdot)} \}
\notag \\
\lesssim & \sum _{|\alpha|+a \leq \sigma +1 \atop a\leq \theta+1} \norm{\stwiddleoops{a}{\alpha} \Udot}
+ \sum _{|\alpha|+a \leq \sigma -1 \atop a\leq \theta} \norm{ \eta t \grad \cdot \soops {a}{\alpha} N^H (\Hdot, \vdot)} \notag \\
& +\sum _{|\alpha|+a \leq \sigma -1 \atop a\leq \theta} t \{\norm{\soops{a}{\alpha} N(\Udot)} +\norm{\soops{a}{\alpha} M^H(\Hdot)} \} \notag \\
& \notag
\end{align}
Our strategy will be first to bound $\Psi_{\sigma,\theta}$ by $\E{\sigma+1}{\theta+1}^{1/2}$ for low energy levels ($\sigma \leq \mu-1$), then use this result to bound $\Xi_{\sigma,\theta}$ by $\E{\sigma+1}{\theta+1}^{1/2}$ for low energy levels.  We will use these results to bound $\Psi_{\sigma,\theta}$ and $\Xi_{\sigma,\theta}$ by $\E{\sigma+1}{\theta+1}^{1/2}$ at high energy levels ($\mu \leq \sigma \leq \kappa-1$).

Assuming $\sigma \leq \mu -1$ we begin by estimating using Lemma \ref{ahatlemma}, splitting into our interior and exterior cutoffs and applying (\ref{sob4}) to obtain
\begin{align}
\sum _{|\alpha|+a \leq \sigma -1 \atop a\leq \theta}
t & \norm{\soops{a}{\alpha} N(\Udot)} \label{boot3}
\\
\lesssim &\sum _{|\alpha|+a \leq \sigma -1 \atop a\leq \theta}
\sum _{c+d\leq a \atop  |\varsigma| + |\vartheta| \leq|\alpha| }
\{ \norm{ \eta t  |\soops{c}{\varsigma} \Udot| \cdot |\grad \stwiddleoops{d}{\vartheta} \Udot|} \notag
\\
& + \norm{ \gamma t  |\soops{c}{\varsigma} \Udot| \cdot |\grad \stwiddleoops{d}{\vartheta} \Udot|} \} \notag
\\
\lesssim & \sum _{|\alpha|+a \leq \sigma -1 \atop a\leq \theta}  \sum _{c+d\leq a \atop  |\varsigma| + |\vartheta| \leq |\alpha| }
\{ \norm{ \eta   \ctwot | \soops{c}{\varsigma} \Udot| \cdot |\grad \stwiddleoops{d}{\vartheta} \Udot|} \notag
\\
& + \norm{ \langle r \rangle  |\soops{c}{\varsigma} \Udot| \cdot |\grad \stwiddleoops{d}{\vartheta} \Udot|} \} \notag
\\
\lesssim & \sum _{|\alpha|+a \leq \sigma -1 \atop a\leq \theta}  \sum _{c+d\leq a \atop  |\varsigma| + |\vartheta| \leq |\alpha| }
\{ \linfnorm{ \soops{c}{\varsigma} \Udot}\norm{ \eta   \ctwot  \grad \stwiddleoops{d}{\vartheta} \Udot} \notag
\\
& + \linfnorm{\langle r \rangle  \soops{c}{\varsigma} \Udot} \norm{  \grad \stwiddleoops{d}{\vartheta} \Udot} \} \notag
\\
\lesssim & \sum _{|\alpha|+a \leq \sigma -1 \atop a\leq \theta} \E{\mu}{\theta}^{1/2}[U] \notag
\{\norm{ \eta   \ctwot  \grad \stwiddleoops{a}{\alpha} \Udot} +\norm{  \grad \stwiddleoops{a}{\alpha} \Udot} \}. \notag
\end{align}
As long as $\E{\mu}{\mu}^{1/2}[U]\ll 1$ is small enough, this implies
\begin{align}
\sum _{|\alpha|+a \leq \sigma -1 \atop a\leq \theta}
t & \norm{\soops{a}{\alpha} N(\Udot)} \leq C \E{\sigma}{\theta}^{1/2}[U] + \epsilon \Psi_{\sigma,\theta}(\Udot) \label{boot4}
\end{align}
where $\epsilon \ll 1$. By a similar argument we have
\begin{align}
\sum _{|\alpha|+a \leq \sigma -1 \atop a\leq \theta}
t & \norm{\soops{a}{\alpha} M^H(\Hdot)} \leq C \E{\sigma}{\theta}^{1/2}[U] + \epsilon \Psi_{\sigma,\theta}(\Udot). \label{boot5}
\end{align}
The remaining term from (\ref{boot2}) is
\begin{align}
\sum _{|\alpha|+a \leq \sigma -1 \atop a\leq \theta} \norm{ \eta t \grad \cdot & \soops {a}{\alpha} N^H (\Hdot, \vdot)} \label{boot6}
\\
\lesssim &  \sum _{|\alpha|+a \leq \sigma -1 \atop a\leq \theta}  \sum _{c+d= a \atop  \varsigma + \vartheta =|\alpha| } \{ \norm{ \eta   \ctwot | \grad \soops{c}{\varsigma} \Udot| \cdot |\grad \stwiddleoops{d}{\vartheta} \Udot|} \notag
\\
& +  \norm{ \eta   \ctwot | \soops{c}{\varsigma} \Udot| \cdot |\grad ^2 \stwiddleoops{d}{\vartheta} \Udot|}\}. \notag
\end{align}
If we temporarily assume WLOG that $c+|\varsigma| \leq d+|\vartheta|$, then since $c+|\varsigma| + 3 < \frac{\mu}{2} +3 \leq \mu$ the first sum on the right can be estimated as follows

\begin{align}
\sum _{|\alpha|+a \leq \sigma -1 \atop a\leq \theta}  \sum _{c+d= a \atop  \varsigma + \vartheta =|\alpha| } & \norm{ \eta   \ctwot | \grad \soops{c}{\varsigma} \Udot| \cdot |\grad \stwiddleoops{d}{\vartheta} \Udot|} \label{boot7}
\\
\lesssim & \sum _{|\alpha|+a \leq \sigma -1 \atop a\leq \theta}  \sum _{c+d= a \atop  \varsigma + \vartheta =|\alpha| }
\linfnorm{\grad \soops{c}{\varsigma} \Udot} \norm{\eta   \ctwot \grad \stwiddleoops{d}{\vartheta} \Udot} \notag
\\
\lesssim & \sum _{|\alpha|+a \leq \sigma -1 \atop a\leq \theta} \E{\mu}{\theta}^{1/2}[U] \norm{\eta   \ctwot \grad \stwiddleoops{a}{\alpha} \Udot}  \notag \\
\lesssim & ~\epsilon \Psi_{\sigma,\theta}(\Udot) \notag
\end{align}
where again $\epsilon \ll 1$.  We will need to work harder to bound the second sum in (\ref{boot6}).  The exceptional case is when $d+|\vartheta|=a+|\alpha|=\sigma-1$ because there are too many derivatives to take the weight $\eta \ctwot$ with the $\grad^2$ term.  Instead we use (\ref{sob6}) to get the estimate

\begin{align}
\norm{ \eta   \ctwot | \Udot| & \cdot |\grad ^2 \stwiddleoops{a}{\alpha} \Udot|} \label{boot8}
\\
\lesssim & \linfnorm{\eta   \ctwot  \Udot} \norm{\grad ^2 \stwiddleoops{a}{\alpha} \Udot} \notag
\\
\lesssim & \left( \sum _{|\beta| \leq 2} \norm{\Upsilon ^\beta \Udot} + \Psi _{3,0}(\Udot)\right) \norm{\grad ^2 \stwiddleoops{a}{\alpha} \Udot} \notag
\\
\lesssim & ~C \E{\sigma+1}{\theta}^{1/2}[U]+\epsilon \Psi_{\sigma,\theta}(\Udot). \notag
\end{align}
When we are not in the exceptional case we have $d+|\vartheta|\leq \sigma -2$ so that
\begin{align}
\norm{ \eta   \ctwot | \soops{c}{\varsigma} \Udot| & \cdot |\grad ^2 \stwiddleoops{d}{\vartheta} \Udot|}  \label{boot9}
\\
\lesssim & \linfnorm{ \soops{c}{\varsigma} \Udot} \norm{ \eta   \ctwot \grad ^2 \stwiddleoops{d}{\vartheta} \Udot} \notag
\\
\lesssim & \epsilon \Psi_{\sigma,\theta}(\Udot) .\notag
\end{align}
This leaves us with the bound
\begin{align}
\sum _{|\alpha|+a \leq \sigma -1 \atop a\leq \theta} \norm{ \eta t \grad \cdot & \soops {a}{\alpha} N^H (\Hdot, \vdot)} \leq C \E{\sigma+1}{\theta}^{1/2}[U]+\epsilon \Psi_{\sigma,\theta}(\Udot) \label{boot10}
\end{align}
Altogether, (\ref{boot2}),  (\ref{boot4}),  (\ref{boot5}) , and (\ref{boot10}) give the estimate (\ref{bootlem2}) for $\sigma \leq \mu -1$.

Keeping the assumption $\sigma \leq \mu -1$ we have from (\ref{bootlem2}), (\ref{boot1}), (\ref{boot4}), and (\ref{boot5})
\begin{align}
\Xi _{\sigma,\theta}(\Udot)
\lesssim & \sum _{|\alpha|+a \leq \sigma \atop a\leq \theta+1} \norm{\stwiddleoops{a}{\alpha} \Udot} \label{boot11}
\\
& +\sum _{|\alpha|+a \leq \sigma -1 \atop a\leq \theta} t \{\norm{\soops{a}{\alpha} N(\Udot)} +\norm{\soops{a}{\alpha} M^H(\Hdot)} \} \notag
\\
\lesssim & ~\E{\sigma}{\theta+1}^{1/2}[U] + \Psi_{\sigma,\theta}(\Udot) \notag
\\
\lesssim & ~\E{\sigma +1}{\theta +1}^{1/2}[U] \notag
\end{align}
proving (\ref{bootlem1}) for $\sigma \leq \mu -1$.

For the rest of the proof we will assume $\mu \leq \sigma \leq \kappa-1$.  As in (\ref{boot3}) we have
\begin{align}
\sum _{|\alpha|+a \leq \sigma -1 \atop a\leq \theta}
t & \norm{\soops{a}{\alpha} N(\Udot)} \label{boot12}
\\
\lesssim &\sum _{|\alpha|+a \leq \sigma -1 \atop a\leq \theta}
\sum _{c+d\leq a \atop  |\varsigma| + |\vartheta| \leq|\alpha| }
\{ \norm{ \eta t  |\soops{c}{\varsigma} \Udot| \cdot |\grad \stwiddleoops{d}{\vartheta} \Udot|} \notag
\\
& + \norm{ \gamma t  |\soops{c}{\varsigma} \Udot| \cdot |\grad \stwiddleoops{d}{\vartheta} \Udot|} \} \notag
\\
\lesssim &\sum _{|\alpha|+a \leq \sigma -1 \atop a\leq \theta}
\sum _{c+d\leq a \atop  |\varsigma| + |\vartheta| \leq|\alpha| }
\{ \norm{ \eta t  |\soops{c}{\varsigma} \Udot| \cdot |\grad \stwiddleoops{d}{\vartheta} \Udot|} \notag
\\
& + \norm{ \gamma t  |\soops{c}{\varsigma} \Udot| \cdot |\grad \stwiddleoops{d}{\vartheta} \Udot|} \} . \notag
\end{align}
For terms where $d+|\vartheta| \leq \mu - 3$ we have
\begin{align}
\norm{ \eta t  &|\soops{c}{\varsigma} \Udot| \cdot |\grad \stwiddleoops{d}{\vartheta} \Udot|}
+ \norm{ \gamma t  |\soops{c}{\varsigma} \Udot| \cdot |\grad \stwiddleoops{d}{\vartheta} \Udot|} \label{boot13}
\\
\lesssim & \linfnorm{\soops{c}{\varsigma} \Udot} \norm{\eta t  \grad \stwiddleoops{d}{\vartheta} \Udot}
+ \linfnorm{\langle r \rangle \grad \stwiddleoops{d}{\vartheta} \Udot} \norm{ \soops{c}{\varsigma} \Udot} \notag
\\
\lesssim & ~\E{\sigma +1}{\theta +1}^{1/2}[U] \Psi_{\mu-1,\theta}(\Udot) + \E{\mu}{\theta}^{1/2}[U] \E{\sigma-1}{\theta}^{1/2}[U] \notag
\\
\lesssim & ~\E{\sigma +1}{\theta +1}^{1/2}[U], \notag
\end{align}
where we used $\Psi_{\mu-1,\theta}(\Udot)\lesssim \E{\mu}{\mu} ^{1/2} (\Udot) \lesssim 1$ in the final inequality.  On the other hand, for terms where $d+|\vartheta| \geq \mu - 2$, we know $c+|\varsigma| \leq 4$ allowing us to get the following bound:
\begin{align}
\norm{ \eta t  &|\soops{c}{\varsigma} \Udot| \cdot |\grad \stwiddleoops{d}{\vartheta} \Udot|}
+ \norm{ \gamma t  |\soops{c}{\varsigma} \Udot| \cdot |\grad \stwiddleoops{d}{\vartheta} \Udot|} \label{boot14}
\\
\lesssim & \linfnorm{\soops{c}{\varsigma} \Udot} \norm{\eta t  \grad \stwiddleoops{d}{\vartheta} \Udot}
+ \linfnorm{\langle r \rangle  \soops{c}{\varsigma} \Udot} \norm{ \grad\stwiddleoops{d}{\vartheta} \Udot} \notag
\\
\lesssim & \E{6}{\theta}^{1/2}[U] (\Psi_{\sigma,\theta}(\Udot)
+  \E{\sigma}{\theta}^{1/2}[U]) \notag
\\
\leq & C\E{\sigma}{\theta}^{1/2}[U] + \epsilon \Psi_{\sigma,\theta}(\Udot). \notag
\end{align}
Combining (\ref{boot12}), (\ref{boot13}) and (\ref{boot14}) we have the estimate
\begin{align}
\sum _{|\alpha|+a \leq \sigma -1 \atop a\leq \theta}
t & \norm{\soops{a}{\alpha} N(\Udot)}
\leq   C\E{\sigma+1}{\theta+1}^{1/2}[U] + \epsilon \Psi_{\sigma,\theta}(\Udot). \label{boot15}
\end{align}
And by a similar argument
\begin{align}
\sum _{|\alpha|+a \leq \sigma -1 \atop a\leq \theta}
t & \norm{\soops{a}{\alpha} M^H(\Hdot)}
\lesssim   \E{\sigma+1}{\theta+1}^{1/2}[U] + \epsilon \Psi_{\sigma,\theta}(\Udot). \label{boot16}
\end{align}
To finish the proof of (\ref{bootlem2}) we need to estimate the remaining nonlinear term in (\ref{boot2}).  Starting with (\ref{boot6}) we bound the first sum by assuming WLOG that $c+|\varsigma| \leq d+|\vartheta|$ because both terms contain a gradient. Then since $c+|\varsigma| + 3 < \frac{\kappa}{2} +3 \leq \mu$ we can follow estimate (\ref{boot7}).  As for the second sum on the right in (\ref{boot6}) we again use (\ref{sob6}) combined with (\ref{bootlem2})  in the exceptional case when $d+|\vartheta|=a+|\alpha|=\sigma-1$:
\begin{align}
\norm{ \eta   \ctwot | \Udot| & \cdot |\grad ^2 \stwiddleoops{a}{\alpha} \Udot|} \label{boot17}
\\
\lesssim & \linfnorm{\eta   \ctwot  \Udot} \norm{\grad ^2 \stwiddleoops{a}{\alpha} \Udot} \notag
\\
\lesssim & \left( \sum _{|\beta| \leq 2} \norm{\Upsilon ^\beta \Udot} + \Psi _{3,0}(\Udot)\right) \norm{\grad ^2 \stwiddleoops{a}{\alpha} \Udot} \notag
\\
\lesssim & ~ \left( \E{2}{0}^{1/2}[U]+ \E{4}{1}^{1/2}[U] \right) \E{\sigma+1}{\theta}^{1/2}[U] \notag
\\
\lesssim & ~ \E{\sigma+1}{\theta}^{1/2}[U]. \notag
\end{align}
When not in the exceptional case we have $d+|\vartheta|\leq \sigma -2$ which means if $c+|\varsigma| \leq \mu-2$ then
\begin{align}
\norm{ \eta   \ctwot | \soops{c}{\varsigma} \Udot| & \cdot |\grad ^2 \stwiddleoops{d}{\vartheta} \Udot|}  \label{boot18}
\\
\lesssim & \linfnorm{ \soops{c}{\varsigma} \Udot} \norm{ \eta   \ctwot \grad ^2 \stwiddleoops{d}{\vartheta} \Udot} \notag
\\
\lesssim & \E{\mu}{mu}^{1/2}[U] \Psi_{\sigma,\theta}(\Udot) \notag
\\
\lesssim & \epsilon \Psi_{\sigma,\theta}(\Udot) \notag
\end{align}
and if $c+|\varsigma| \geq \mu-1$ then $d+|\vartheta|\leq 4$ giving us
\begin{align}
\norm{ \eta   \ctwot | \soops{c}{\varsigma} \Udot| & \cdot |\grad ^2 \stwiddleoops{d}{\vartheta} \Udot|}  \label{boot19}
\\
\lesssim & \linfnorm{ \soops{c}{\varsigma} \Udot} \norm{ \eta   \ctwot \grad ^2 \stwiddleoops{d}{\vartheta} \Udot} \notag
\\
\lesssim & \E{\sigma+1}{\theta}^{1/2}[U] \Psi_{6,\theta}(\Udot) \notag
\\
\lesssim & \E{\sigma+1}{\theta}^{1/2}[U] \E{7}{\theta+1}^{1/2}[U] \notag
\\
\lesssim & \E{\sigma+1}{\theta}^{1/2}[U], \notag
\end{align}
after again applying (\ref{bootlem2}) for low energy.

Taking (\ref{boot2}), (\ref{boot7}),(\ref{boot15})-(\ref{boot19}) together we derive estimate (\ref{bootlem2}).  In light of (\ref{boot11}) we have also proven (\ref{bootlem1}).  The bound (\ref{bootlem3}) follows immediately since

$$
\Chi{\sigma}{\theta} \lesssim \Xi _{\sigma,\theta}(\Udot) +\Psi _{\sigma,\theta}(\Udot).
$$

\end{proof}

Before beginning our energy estimates we state the energy identity which can be obtained by a standard calculation for quasilinear symmetric hyperbolic systems.  Notice we have not commuted the vector fields with the laplacian in the viscosity term.  We will leave this commutation until the end of the energy estimates where we do induction on $\theta$.

\begin{align}
\deet & \E{\sigma}{\theta} [U] - \visc \sum _{{a+|\alpha| \leq \sigma}\atop {a\leq \theta}} \int \stwiddleoops{a}{\alpha} v^i  \soops{a}{\alpha} \laplacian v^i \label{compen19}
\\
= & \sum _{{a+|\alpha| \leq \sigma}\atop {a\leq \theta}}
\Big( \int [D^k_n \ahat{lm}{pj}](I) \big \{ \frac{1}{2} \deen v^k\stwiddleoops{a}{\alpha} \hdot{j}{m}  \stwiddleoops{a}{\alpha}\hdot{p}{l} \notag
\\
&+\deel \hdot{k}{n} \stwiddleoops{a}{\alpha} \hdot{j}{m}  \stwiddleoops{a}{\alpha} v^i \notag
\\
&- \sum _{{b+c=a}\atop {\beta+\varsigma = \alpha}} \sum _{c+|\varsigma| \neq a+|\alpha|} C_{b,a,\beta,\alpha} \soops{b}{\beta} \hdot{k}{n} \stwiddleoops{a}{\alpha} v^p  \deel \stwiddleoops{c}{\varsigma}\hdot{j}{m} \big \} \notag
\\
&+\int \ahat{lm}{pj}(H) \stwiddleoops{a}{\alpha} \hdot{j}{m}  \deel \hdot{p}{i} \stwiddleoops{a}{\alpha} v^i
\notag
\\
&+\sum _{{b+c=a}\atop {\beta+\varsigma = \alpha}} \sum _{c+|\varsigma| \neq a+|\alpha|}  \int C_{b,a,\beta,\alpha} \Big ( \stwiddleoops{a}{\alpha} v^i \soops{b}{\beta} v^p \deep \stwiddleoops{c}{\varsigma} v^i \notag
\\
& -\ahat{lm}{pj}(I) \big \{ \stwiddleoops{a}{\alpha} \hdot{j}{m} \soops{b}{\beta} \hdot{p}{i} \deel \stwiddleoops{c}{\varsigma} v^i
+ \stwiddleoops{a}{\alpha} v^i \soops{b}{\beta} \hdot{p}{i} \deel \stwiddleoops{c}{\varsigma}\hdot{j}{m} \big \} \notag
\\
&+\ahat{lm}{pj}(H) \stwiddleoops{a}{\alpha}\hdot{j}{m} \soops{b}{\beta} v^i \deei \stwiddleoops{c}{\varsigma} \hdot{p}{l} \Big )  \notag
\\
&+O \big(\sum _{{b+c+d=a}\atop {\beta+\varsigma+\vartheta = \alpha}} \sum _{c+|\varsigma| \neq a+|\alpha|} \int  |\stwiddleoops{a}{\alpha}\Udot|\cdot| \soops{b}{\beta}\Udot|\cdot| \soops{d}{\vartheta} \Udot|\cdot| \grad \stwiddleoops{c}{\varsigma}\Udot| \big ) \Big) \notag.
\end{align}
Here we have separated the quadratic terms which will be handled via the null condition and constraints from the higher order terms.  We also note that we have used Lemma \ref{ahatlemma} and the smallness of the low energy to get the higher order terms.

\section{Energy Estimates}

\setcounter{equation}{0}
\renewcommand{\theequation}{10.\arabic{equation}}

Now we prove the main theorem by estimating the time derivative of the energy $\deet \E{\sigma}{\theta}[U]$ for high ($\sigma=\kappa$) and low ($\sigma=\kappa-4=\mu$) energy levels.  Using these estimates we will do a finite induction proof on $\theta$ to complete the result.  We do not make any assumptions about $\theta$ until the inductive portion of the proof.
\begin{proof}[Proof of Theorem \ref{main}] {\ }  \newline

\noindent \textit{High Energy}

Beginning by assuming $\sigma = \kappa$ we estimate very roughly using (\ref{compen19}), Sobolev embedding, and crashing through with absolute value to obtain
\begin{align}
\deet & \E{\kappa}{\theta} [U] - \visc \sum _{{a+|\alpha| \leq \kappa}\atop {a\leq \theta}} \int \stwiddleoops{a}{\alpha} v^i  \soops{a}{\alpha} \laplacian v^i \label{enest1}
\\
\lesssim & \sum _{{a+|\alpha| \leq \kappa}\atop {a\leq \theta}} \sum _{{b+c=a}\atop {\beta+\varsigma = \alpha}} \sum _{c+|\varsigma| \neq a+|\alpha|}
\int  |\stwiddleoops{a}{\alpha}\Udot|\cdot| \soops{b}{\beta}\Udot|\cdot| \grad \stwiddleoops{c}{\varsigma}\Udot|.  \notag
\end{align}
We split our integral using the cutoff functions $\eta$ and $\gamma$
\begin{align}
\int  |\stwiddleoops{a}{\alpha}\Udot|&\cdot| \soops{b}{\beta}\Udot|\cdot| \grad \stwiddleoops{c}{\varsigma}\Udot| \label{enest2}
\\
=& \int \gamma |\stwiddleoops{a}{\alpha}\Udot|\cdot| \soops{b}{\beta}\Udot|\cdot| \grad \stwiddleoops{c}{\varsigma}\Udot| \notag
\\
&+\int \eta |\stwiddleoops{a}{\alpha}\Udot|\cdot| \soops{b}{\beta}\Udot|\cdot| \grad \stwiddleoops{c}{\varsigma}\Udot|. \notag
\end{align}

To estimate the exterior integral we consider two cases and use the fact that $r\geq \frac{\ctwot}{m}$ on supp $\gamma$.  If $|\beta| + b \leq |\varsigma|+c$, then $|\beta| + b \leq [\frac{\kappa}{2}] \leq \mu-2$, so
\begin{align}
\int \gamma |\stwiddleoops{a}{\alpha}\Udot|&\cdot| \soops{b}{\beta}\Udot|\cdot| \grad \stwiddleoops{c}{\varsigma}\Udot| \label{enest3}
\\
&\leq \int mr \ctwot^{-1} |\stwiddleoops{a}{\alpha}\Udot|\cdot| \soops{b}{\beta}\Udot|\cdot| \grad \stwiddleoops{c}{\varsigma}\Udot| \notag
\\
&\lesssim \langle t \rangle ^{-1} \int \langle r \rangle |\stwiddleoops{a}{\alpha}\Udot|\cdot| \soops{b}{\beta}\Udot|\cdot| \grad \stwiddleoops{c}{\varsigma}\Udot| \notag
\\
&\lesssim \langle t \rangle ^{-1} \linfnorm{ \langle r \rangle \soops{b}{\beta}\Udot}  \int  |\stwiddleoops{a}{\alpha}\Udot|\cdot| \grad \stwiddleoops{c}{\varsigma}\Udot| \notag
\\
&\lesssim \langle t \rangle ^{-1} \E{\mu}{\mu}^{1/2} [U] \E{\kappa}{\theta} [U]. \notag
\end{align}
Here we used the weighted Sobolev estimate (\ref{sob4}). If $|\beta| + b > |\varsigma|+c$, then $|\varsigma|+c \leq [\frac{\kappa}{2}]+1 \leq \mu-2$, so by a similar argument we have the same bound in this case.  Thus,
\begin{align}
\sum _{{a+|\alpha| \leq \kappa}\atop {a\leq \theta}}& \sum _{{b+c=a}\atop {\beta+\varsigma = \alpha}} \sum _{c+|\varsigma| \neq a+|\alpha|} \int \gamma |\stwiddleoops{a}{\alpha}\Udot|\cdot| \soops{b}{\beta}\Udot|\cdot| \grad \stwiddleoops{c}{\varsigma}\Udot| \label{enest4}
\\
&\lesssim \langle t \rangle ^{-1} \E{\mu}{\mu}^{1/2} [U] \E{\kappa}{\theta} [U]. \notag
\end{align}

On the interior region we also consider a few cases and this time we use (\ref{bootlem2}) to recover a factor of $\langle t \rangle ^{-1}$.  The exceptional case is when $|\varsigma|+c = |\alpha| + a -1$, and $|\beta| + b=1$ where we need to use (\ref{sob6}) in tandem with (\ref{bootlem2}).  In this case, we estimate
\begin{align}
\int \eta& |\stwiddleoops{a}{\alpha} \Udot|\cdot| \soops{b}{\beta}\Udot|\cdot| \grad \stwiddleoops{c}{\varsigma}\Udot| \label{enest5}
\\
\lesssim & \int \ctwot^{-1} | \stwiddleoops{a}{\alpha} \Udot|\cdot| \eta \ctwot \soops{b}{\beta}\Udot|\cdot| \grad \stwiddleoops{c}{\varsigma}\Udot| \notag
\\
\lesssim & \langle t \rangle ^{-1} \etalinfnorm{ \ctwot \soops{b}{\beta}\Udot} \int | \stwiddleoops{a}{\alpha} \Udot|\cdot| \grad \stwiddleoops{c}{\varsigma}\Udot| \notag
\\
\lesssim & \langle t \rangle ^{-1} \Big( \sum_{|\vartheta |\leq 2} \norm{\soops{b}{ \beta+\vartheta} \Udot } + \Psi_{b+|\beta|+3,b}(\Udot) \Big ) \E{\kappa}{\theta} [U] \notag
\\
\lesssim & \langle t \rangle ^{-1} \Big(\E{3}{1}^{1/2} [U] + \Psi_{4,1}(\Udot)\Big ) \E{\kappa}{\theta} [U] \notag
\\
\lesssim & \langle t \rangle ^{-1} \E{5}{2}^{1/2} [U] \E{\kappa}{\theta} [U] \notag
\\
\lesssim & \langle t \rangle ^{-1} \E{\mu}{\mu}^{1/2} [U] \E{\kappa}{\theta} [U]. \notag
\end{align}
A second possibility is $ |\beta| + b \leq |\varsigma|+c \leq |\alpha| + a -2$ in which case $|\beta| + b \leq [\frac{\kappa}{2}] \leq \mu-2$ giving the estimate
\begin{align}
\int \eta& |\stwiddleoops{a}{\alpha} \Udot|\cdot| \soops{b}{\beta}\Udot|\cdot| \grad \stwiddleoops{c}{\varsigma}\Udot| \label{enest6}
\\
\lesssim & \int \ctwot^{-1} | \stwiddleoops{a}{\alpha} \Udot|\cdot|  \soops{b}{\beta}\Udot|\cdot| \eta \ctwot \grad \stwiddleoops{c}{\varsigma}\Udot| \notag
\\
\lesssim & \langle t \rangle ^{-1} \linfnorm{  \soops{b}{\beta}\Udot} \norm{ \stwiddleoops{a}{\alpha} \Udot}\cdot \norm{ \eta \ctwot \grad \stwiddleoops{c}{\varsigma}\Udot} \notag
\\
\lesssim & \langle t \rangle ^{-1} \E{|\beta| + b+2}{b}^{1/2} [U] \E{\kappa}{\theta} ^{1/2} [U] \Psi_{|\varsigma|+c+1,c}(\Udot)  \notag
\\
\lesssim & \langle t \rangle ^{-1} \E{\mu}{\mu}^{1/2} [U] \E{\kappa}{\theta} ^{1/2} [U] \E{\kappa}{\theta+1} ^{1/2} [U]  \notag
\end{align}
using (\ref{bootlem2}) and the standard Sobolev embedding.
Finally, if $|\beta| + b > |\varsigma|+c$, then $|\varsigma|+c \leq [\frac{\kappa}{2}]-1 \leq \mu-5$, so
\begin{align}
\int \eta& |\stwiddleoops{a}{\alpha} \Udot|\cdot| \soops{b}{\beta}\Udot|\cdot| \grad \stwiddleoops{c}{\varsigma}\Udot| \label{enest7}
\\
\lesssim & \int \ctwot^{-1} | \stwiddleoops{a}{\alpha} \Udot|\cdot|  \soops{b}{\beta}\Udot|\cdot| \eta \ctwot \grad \stwiddleoops{c}{\varsigma}\Udot| \notag
\\
\lesssim &  \langle t \rangle ^{-1} \linfnorm{\eta \ctwot \grad \stwiddleoops{c}{\varsigma}\Udot} \norm{\stwiddleoops{a}{\alpha} \Udot} \cdot \norm{\soops{b}{\beta}\Udot} \notag
\\
\lesssim &  \langle t \rangle ^{-1} (\E{|\varsigma|+c+3}{c} ^{1/2} [U]+ \Psi_{|\varsigma|+c+4,c}(\Udot)) \E{\kappa}{\theta} [U] \notag
\\
\lesssim &  \langle t \rangle ^{-1} \E{|\varsigma|+c+5}{c+1} ^{1/2} [U] \E{\kappa}{\theta} [U] \notag
\\
\lesssim & \langle t \rangle ^{-1} \E{\mu}{\mu}^{1/2} [U] \E{\kappa}{\theta} [U] \notag
\end{align}
after using (\ref{sob6}) and (\ref{bootlem2}).
Taking estimates (\ref{enest5})-(\ref{enest7}) together, we have
\begin{align}
\sum _{{a+|\alpha| \leq \kappa}\atop {a\leq \theta}} &\sum _{{b+c=a}\atop {\beta+\varsigma = \alpha}} \sum _{c+|\varsigma| \neq a+|\alpha|} \int \eta |\stwiddleoops{a}{\alpha} \Udot|\cdot| \soops{b}{\beta}\Udot|\cdot| \grad \stwiddleoops{c}{\varsigma}\Udot| \label{enest9}
\\
\lesssim & \langle t \rangle ^{-1} \E{\mu}{\mu}^{1/2} [U] \E{\kappa}{\theta+1} [U]. \notag
\end{align}

Altogether our high energy estimates, (\ref{enest4}) and (\ref{enest9}), give
\begin{align}
\deet & \E{\kappa}{\theta} [U] - \visc \sum _{{a+|\alpha| \leq \kappa}\atop {a\leq \theta}} \int \stwiddleoops{a}{\alpha} v^i  \soops{a}{\alpha} \laplacian v^i \label{enest11}
\\
\lesssim & \sum _{{a+|\alpha| \leq \kappa}\atop {a\leq \theta}} \sum _{{b+c=a}\atop {\beta+\varsigma = \alpha}} \sum _{c+|\varsigma| \neq a+|\alpha|} \int |\stwiddleoops{a}{\alpha} \Udot|\cdot| \soops{b}{\beta}\Udot|\cdot| \grad \stwiddleoops{c}{\varsigma}\Udot| \notag
\\
\lesssim & \langle t \rangle ^{-1} \E{\mu}{\mu}^{1/2} [U] \E{\kappa}{\theta+1} [U]. \notag
\end{align}
In order to complete the induction on $\theta$ later on, we will also need the estimate
\begin{align}
\deet & \E{\kappa}{\mu+1} [U] - \visc \sum _{{a+|\alpha| \leq \kappa}\atop {a\leq \theta}} \int \stwiddleoops{a}{\alpha} v^i  \soops{a}{\alpha} \laplacian v^i \label{enest11a}
\\
\lesssim & \langle t \rangle ^{-1} \E{\mu}{\mu}^{1/2} [U] \E{\kappa}{\mu+1} [U]. \notag
\end{align}
Considering (\ref{enest4}) with $\theta = \mu+1$, we only need to show
\begin{align}
\sum _{{a+|\alpha| \leq \kappa}\atop {a\leq \mu+1}} &\sum _{{b+c=a}\atop {\beta+\varsigma = \alpha}} \sum _{c+|\varsigma| \neq a+|\alpha|} \int \eta |\stwiddleoops{a}{\alpha} \Udot|\cdot| \soops{b}{\beta}\Udot|\cdot| \grad \stwiddleoops{c}{\varsigma}\Udot| \label{enest11b}
\\
\lesssim & \langle t \rangle ^{-1} \E{\mu}{\mu}^{1/2} [U] \E{\kappa}{\mu+1} [U]. \notag
\end{align}
In terms where $a\leq \mu$, we may apply estimate (\ref{enest9}) to achieve this bound, so we only consider the case when $b+c=a=\mu+1$.  Terms where $b$ and $c$ are each at most $\mu$ are estimated using (\ref{enest5}) - (\ref{enest7}).  We are left with two exceptional cases, $b=\mu+1$ and $c=\mu+1$.  If $b=\mu+1$, then $c=0$ and $|\varsigma| \leq 3$, so using (\ref{sob6}) and (\ref{bootlem2}) we get
\begin{align}
\int \eta & |\stwiddleoops{a}{\alpha} \Udot|\cdot| \soops{b}{\beta}\Udot|\cdot| \grad \stwiddleoops{c}{\varsigma}\Udot| \label{enest11c}
\\
=& \int \eta  |\stwiddleoops{\mu+1}{\alpha} \Udot|\cdot| \soops{\mu+1}{\beta}\Udot|\cdot| \grad \Upsilon^{\varsigma}\Udot| \notag
\\
\lesssim & \langle t \rangle ^{-1}   \norm{\stwiddleoops{\mu+1}{\alpha} \Udot}\cdot \norm{ \soops{\mu+1}{\beta}\Udot} \cdot \linfnorm{  \eta \ctwot \grad \Upsilon^{\varsigma}\Udot} \notag
\\
\lesssim & \langle t \rangle ^{-1} \E{\kappa}{\mu+1} [U] (\E{6}{0}^{1/2} [U] + \Psi _{7,0} (\Udot)) \notag
\\
\lesssim & \langle t \rangle ^{-1} \E{\kappa}{\mu+1} [U] \E{8}{1}^{1/2} [U]  \notag
\\
\lesssim & \langle t \rangle ^{-1} \E{\kappa}{\mu+1} [U] \E{\mu}{\mu}^{1/2} [U].  \notag
\end{align}
A similar argument can be used when $c=\mu+1$.  This shows (\ref{enest11b}) holds, so the estimate (\ref{enest11a}) is valid as well.

\vspace{.1in}

\noindent \textit{Low Energy}

For the low energy ($\sigma= \mu$) we need to rely on the precise structure of (\ref{compen19}) combined with the null condition (\ref{ncproj}) to obtain $\langle t \rangle ^{-3/2}$ decay on the exterior region.  We split each integral on the RHS of (\ref{compen19}) into two integrals using our cutoffs $\eta$ and $\gamma$.  On the interior region, we will easily obtain the desired decay by estimating roughly as in (\ref{enest1})
\begin{align}
\int  \eta & |\stwiddleoops{a}{\alpha}\Udot|\cdot| \soops{b}{\beta}\Udot|\cdot| \grad \stwiddleoops{c}{\varsigma}\Udot| \label{enest12}
\\
\lesssim & \langle t \rangle ^{-2} \int  \ctwot ^{2} \eta |\stwiddleoops{a}{\alpha}\Udot|\cdot| \soops{b}{\beta}\Udot|\cdot| \grad \stwiddleoops{c}{\varsigma}\Udot| \notag
\\
\lesssim & \langle t \rangle ^{-2} \int    |\stwiddleoops{a}{\alpha}\Udot|\cdot| \ctwot \soops{b}{\beta}\Udot|\cdot| \eta \ctwot \grad \stwiddleoops{c}{\varsigma}\Udot| \notag
\\
\lesssim & \langle t \rangle ^{-2} \etalinfnorm{\ctwot \soops{b}{\beta}\Udot} \norm{\stwiddleoops{a}{\alpha}\Udot} \cdot \norm{\eta \ctwot \grad \stwiddleoops{c}{\varsigma}\Udot} \notag
\\
\lesssim & \langle t \rangle ^{-2} \Big(\sum_{|\vartheta |\leq 2} \norm{\soops{b}{\beta+\vartheta} \Udot } + \Psi_{b+|\beta|+3,b}(\Udot) \Big) \E{a+|\alpha|}{a}^{1/2}[U] \Psi_{c+|\varsigma|+1,c}(\Udot) \notag
\\
\lesssim & \langle t \rangle ^{-2}   \E{b+|\beta|+4}{b+1}^{1/2}[U]  \E{\mu}{\theta}^{1/2} [U] \E{c+|\varsigma|+2}{c+1}^{1/2}[U] \notag
\\
\lesssim & \langle t \rangle ^{-2} \E{\mu}{\theta+1}^{1/2} [U] \E{\mu}{\theta}^{1/2} [U] \E{\kappa}{\theta+1}^{1/2} [U]. \notag
\end{align}
Here we used (\ref{sob6}), (\ref{bootlem2}), and the fact that either $b+|\beta| \leq [\frac{\kappa}{2}]$ or $c+|\varsigma| \leq [\frac{\kappa}{2}]$.

The exterior estimates begin with the computed energy from (\ref{compen19}):

\begin{align}
&\sum_{{a+|\alpha| \leq \sigma}\atop {a\leq \theta}}
\Big( \int \gamma [D^k_n \ahat{lm}{pj}](I) \big \{ \frac{1}{2} \deen v^k\stwiddleoops{a}{\alpha} \hdot{j}{m}  \stwiddleoops{a}{\alpha}\hdot{p}{l} \label{enest13}
\\
&+\deel \hdot{k}{n} \stwiddleoops{a}{\alpha} \hdot{j}{m}  \stwiddleoops{a}{\alpha} v^i \notag
\\
&- \sum _{{b+c=a}\atop {\beta+\varsigma = \alpha}} \sum _{c+|\varsigma| \neq a+|\alpha|} C_{b,a,\beta,\alpha} \soops{b}{\beta} \hdot{k}{n} \stwiddleoops{a}{\alpha} v^p  \deel \stwiddleoops{c}{\varsigma}\hdot{j}{m} \big \} \notag
\\
&+\int \gamma \ahat{lm}{pj}(H) \stwiddleoops{a}{\alpha} \hdot{j}{m}  \deei \hdot{p}{l} \stwiddleoops{a}{\alpha} v^i
\notag
\\
&+\sum _{{b+c=a}\atop {\beta+\varsigma = \alpha}} \sum _{c+|\varsigma| \neq a+|\alpha|}  \int \gamma C_{b,a,\beta,\alpha} \Big ( \stwiddleoops{a}{\alpha} v^i \soops{b}{\beta} v^p \deep \stwiddleoops{c}{\varsigma} v^i \notag
\\
& -\ahat{lm}{pj}(I) \big \{ \stwiddleoops{a}{\alpha} \hdot{j}{m} \soops{b}{\beta} \hdot{p}{i} \deel \stwiddleoops{c}{\varsigma} v^i
+ \stwiddleoops{a}{\alpha} v^i \soops{b}{\beta} \hdot{p}{i} \deel \stwiddleoops{c}{\varsigma}\hdot{j}{m} \big \} \notag
\\
&+\ahat{lm}{pj}(H) \stwiddleoops{a}{\alpha}\hdot{j}{m} \soops{b}{\beta} v^i \deei \stwiddleoops{c}{\varsigma} \hdot{p}{l} \Big )  \notag
\\
&+\sum _{{b+c+d=a}\atop {\beta+\varsigma+\vartheta = \alpha}} \sum _{c+|\varsigma| \neq a+|\alpha|} \int  \gamma |\stwiddleoops{a}{\alpha}\Udot|\cdot| \soops{b}{\beta}\Udot|\cdot| \soops{d}{\vartheta} \Udot|\cdot| \grad \stwiddleoops{c}{\varsigma}\Udot| \Big) \notag.
\end{align}
We start by estimating the final term using $r \geq \frac{\ctwot}{m}$ on supp $\gamma$,(\ref{sob4}) and the fact that either $b+|\beta| \leq [\frac{\kappa}{2}]$ or $d+|\vartheta| \leq [\frac{\kappa}{2}]$
\begin{align}
\int  \gamma &|\stwiddleoops{a}{\alpha}\Udot|\cdot| \soops{b}{\beta}\Udot|\cdot| \soops{d}{\vartheta} \Udot|\cdot| \grad \stwiddleoops{c}{\varsigma}\Udot| \label{enest14}
\\
\lesssim & \int \langle t \rangle ^{-2} r^2 |\stwiddleoops{a}{\alpha}\Udot|\cdot| \soops{b}{\beta}\Udot|\cdot| \soops{d}{\vartheta} \Udot|\cdot| \grad \stwiddleoops{c}{\varsigma}\Udot| \notag
\\
\lesssim & \langle t \rangle ^{-2} \linfnorm{ \langle r \rangle \soops{b}{\beta}\Udot } \linfnorm{\langle r \rangle \soops{d}{\vartheta} \Udot} \norm{\stwiddleoops{a}{\alpha}\Udot} \norm{ \grad \stwiddleoops{c}{\varsigma}\Udot} \notag
\\
\lesssim & \langle t \rangle ^{-2} \E{b+|\beta|+2}{b}^{1/2}[U] \E{d+|\vartheta|+2}{d}^{1/2}[U] \E{a+|\alpha|}{a}^{1/2}[U] \E{c+|\varsigma|+1}{c}^{1/2}[U] \notag
\\
\lesssim & \langle t \rangle ^{-2} \E{b+|\beta|+2}{b}^{1/2}[U] \E{d+|\vartheta|+2}{d}^{1/2}[U] \E{\mu}{\theta}[U] \notag
\\
\lesssim & \langle t \rangle ^{-2}\E{\mu}{\mu}^{1/2}[U] \E{\kappa}{\theta}^{1/2}[U] \E{\mu}{\theta}[U]  \notag
\notag
\\
\lesssim & \langle t \rangle ^{-2} \E{\kappa}{\theta}^{1/2}[U] \E{\mu}{\theta}[U].  \notag
\end{align}
The convective terms from (\ref{enest13}) are estimated using (\ref{sob8}) in tandem with (\ref{polargrad}).  For example
\begin{align}
\int \gamma \ahat{lm}{pj}(H)& \stwiddleoops{a}{\alpha}\hdot{j}{m} \soops{b}{\beta} v^i \deei \stwiddleoops{c}{\varsigma} \hdot{p}{l} \label{enest15}
\\
=& \int \gamma \ahat{lm}{pj}(H) \stwiddleoops{a}{\alpha}\hdot{j}{m} \soops{b}{\beta} v^i \omega^i \deer \stwiddleoops{c}{\varsigma} \hdot{p}{l} \notag
\\
&+ \int \gamma r^{-1}\ahat{lm}{pj}(H) \stwiddleoops{a}{\alpha}\hdot{j}{m} \soops{b}{\beta} v^i (\omega \wedge \Omega)_i \stwiddleoops{c}{\varsigma} \hdot{p}{l} \notag
\\
\lesssim & \int \gamma |\stwiddleoops{a}{\alpha}\Udot|\cdot |\omega \cdot  (\soops{b}{\beta} \vdot)| \cdot |\grad \stwiddleoops{c}{\varsigma} \Udot| \notag
\\
& +  \langle t \rangle ^{-2} \int \gamma |\langle r \rangle \stwiddleoops{a}{\alpha}\Udot|\cdot |\soops{b}{\beta}\Udot|\cdot|\omegatwiddle \stwiddleoops{c}{\varsigma} \Udot| \notag
\\
\lesssim & \langle t \rangle ^{-3/2} \linfnorm{r^{3/2} \omega \cdot  (\soops{b}{\beta} \vdot)} \norm{\stwiddleoops{a}{\alpha}\Udot} \cdot \norm{ \grad \stwiddleoops{c}{\varsigma} \Udot} \notag
\\
&+ \langle t \rangle ^{-2} \linfnorm{\langle r \rangle \stwiddleoops{a}{\alpha}\Udot} \norm{\soops{b}{\beta}\Udot} \cdot \norm{\omegatwiddle \stwiddleoops{c}{\varsigma} \Udot} \notag
\\
\lesssim & \langle t \rangle ^{-3/2} \E{\mu}{\theta}[U] (\E{a+|\alpha|+2}{a}^{1/2}[U] + \E{b+|\beta|+2}{b}^{1/2}[U]) \notag
\\
\lesssim & \langle t \rangle ^{-3/2} \E{\mu}{\theta}[U] \E{\kappa}{\theta} ^{1/2} [U]. \notag
\end{align}
The other convective terms
$$\int \gamma \stwiddleoops{a}{\alpha} v^i \soops{b}{\beta} v^p \deep \stwiddleoops{c}{\varsigma} v^i $$
and
$$\int \gamma \ahat{lm}{pj}(H) \stwiddleoops{a}{\alpha} \hdot{j}{m}  \deei \hdot{p}{l} \stwiddleoops{a}{\alpha} v^i$$
have the same bound using a similar argument.

We are left with the cubic terms
\begin{align}
\int \gamma & [D^k_n \ahat{lm}{pj}](I) \big \{ \frac{1}{2} \deen v^k\stwiddleoops{a}{\alpha} \hdot{j}{m}  \stwiddleoops{a}{\alpha}\hdot{p}{l} \label{enest16}
+\deel \hdot{k}{n} \stwiddleoops{a}{\alpha} \hdot{j}{m}  \stwiddleoops{a}{\alpha} v^i
\\
&- \sum _{{b+c=a}\atop {\beta+\varsigma = \alpha}} \sum _{c+|\varsigma| \neq a+|\alpha|} C_{b,a,\beta,\alpha} \soops{b}{\beta} \hdot{k}{n} \stwiddleoops{a}{\alpha} v^p  \deel \stwiddleoops{c}{\varsigma}\hdot{j}{m} \big \} \notag
\\
&-\sum _{{b+c=a}\atop {\beta+\varsigma = \alpha}} \sum _{c+|\varsigma| \neq a+|\alpha|}  \int \gamma C_{b,a,\beta,\alpha}  \ahat{lm}{pj}(I) \big \{ \stwiddleoops{a}{\alpha} \hdot{j}{m} \soops{b}{\beta} \hdot{p}{i} \deel \stwiddleoops{c}{\varsigma} v^i \notag
\\
&+ \stwiddleoops{a}{\alpha} v^i \soops{b}{\beta} \hdot{p}{i} \deel \stwiddleoops{c}{\varsigma}\hdot{j}{m} \big \} \notag
\end{align}
which can only be estimated using the null condition.
Each individual term above can be written in the form
$$\int \gamma \langle \langle Q^l(\deel \stwiddleoops{c}{\varsigma} \Udot, \stwiddleoops{b}{\beta} \Udot), \stwiddleoops{a}{\alpha} \Udot \rangle \rangle $$
for some quadratic form $Q$ where each of $a+|\alpha|$, $b+|\beta|$, and $c+|\varsigma|+1$ are no larger than $\mu$ and $\langle \langle \cdot , \cdot \rangle \rangle \equiv  \langle \cdot , \cdot  \rangle _{\rthree \otimes \rthree \times \rthree }$.
If we decompose using the spectral projections (\ref{pplus})-(\ref{pzero}) and (\ref{polargrad}), we have
\begin{align}
\int \gamma & \langle \langle Q^l(\deel \stwiddleoops{c}{\varsigma} \Udot, \stwiddleoops{b}{\beta} \Udot), \stwiddleoops{a}{\alpha} \Udot \rangle \rangle \label{enest17}
\\
=& \sum_{K,L,M \in \{+,-,0 \} } \int \gamma  \langle \langle Q^l(\sproj{K} \deel \stwiddleoops{c}{\varsigma} \Udot, \sproj{L} \stwiddleoops{b}{\beta} \Udot), \sproj{M} \stwiddleoops{a}{\alpha} \Udot \rangle \rangle \notag
\\
=& \int \gamma  \langle \langle Q^l(\sproj{+} \deel \stwiddleoops{c}{\varsigma} \Udot, \sproj{+} \stwiddleoops{b}{\beta} \Udot), \sproj{+} \stwiddleoops{a}{\alpha} \Udot \rangle \rangle \notag
\\
&+ \sum_{(K,L,M) \neq (+,+,+) } \int \gamma  \langle \langle Q^l(\sproj{K} \deel \stwiddleoops{c}{\varsigma} \Udot, \sproj{L} \stwiddleoops{b}{\beta} \Udot), \sproj{M} \stwiddleoops{a}{\alpha} \Udot \rangle \rangle \notag
\\
=& \int \gamma  \langle \langle Q^l(\sproj{+} \omega_l \deer \stwiddleoops{c}{\varsigma} \Udot, \sproj{+} \stwiddleoops{b}{\beta} \Udot), \sproj{+} \stwiddleoops{a}{\alpha} \Udot \rangle \rangle \notag
\\
&-\int \gamma  \langle \langle Q^l(\sproj{+} r^{-1}(\omega \wedge \Omega)_l \stwiddleoops{c}{\varsigma} \Udot, \sproj{+} \stwiddleoops{b}{\beta} \Udot), \sproj{+} \stwiddleoops{a}{\alpha} \Udot \rangle \rangle \notag
\\
&+ \sum_{(K,L,M) \neq (+,+,+) } \int \gamma  \langle \langle Q^l(\sproj{K} \deel \stwiddleoops{c}{\varsigma} \Udot, \sproj{L} \stwiddleoops{b}{\beta} \Udot), \sproj{M} \stwiddleoops{a}{\alpha} \Udot \rangle \rangle \notag
\\
\equiv & \textrm{(N1)} + \textrm{(N2)} + \textrm{(N3)}
\end{align}
We bound (N2) by using $\langle t \rangle \lesssim \langle r \rangle$ on supp $\gamma$ and (\ref{sob4})
\begin{align}
\int \gamma&   r^{-1}|\omegatwiddle \stwiddleoops{c}{\varsigma} \Udot| \cdot | \stwiddleoops{b}{\beta} \Udot|\cdot  | \stwiddleoops{a}{\alpha} \Udot| \label{enest18a}
\\
\lesssim & \int \gamma \langle r \rangle \langle t \rangle ^{-2} |\omegatwiddle \stwiddleoops{c}{\varsigma} \Udot| \cdot | \stwiddleoops{b}{\beta} \Udot|\cdot  | \stwiddleoops{a}{\alpha} \Udot| \notag
\\
\lesssim & \langle t \rangle ^{-2} \linfnorm{\langle r \rangle \omegatwiddle \stwiddleoops{c}{\varsigma} \Udot} \norm{\stwiddleoops{b}{\beta} \Udot} \cdot \norm{\stwiddleoops{a}{\alpha} \Udot} \notag
\\
\lesssim & \langle t \rangle ^{-2} \E{\kappa}{\theta}[U]  \E{\mu}{\theta} [U] \notag
\end{align}
since $c+|\varsigma|+3 \leq \kappa$.

To estimate (N3), assume WLOG $K\neq +$.  Then because $$1\lesssim  r\langle t \rangle ^{-1} \lesssim \langle \ev{K} t -r \rangle \langle t \rangle ^{-1}$$ on supp $\gamma$, we have
\begin{align}
\int \gamma  \langle \langle Q^l &(\sproj{K} \deel \stwiddleoops{c}{\varsigma} \Udot, \sproj{L} \stwiddleoops{b}{\beta} \Udot), \sproj{M} \stwiddleoops{a}{\alpha} \Udot \rangle \rangle \label{enest18}
\\
\lesssim & \int \gamma \langle t \rangle ^{-3/2} \langle r \rangle  \langle \ev{K} t -r \rangle^{1/2} |\sproj{K} \grad \stwiddleoops{c}{\varsigma} \Udot| \cdot |\stwiddleoops{b}{\beta} \Udot| \cdot | \stwiddleoops{a}{\alpha} \Udot | \notag
\\
\lesssim & \langle t \rangle ^{-3/2} \linfnorm{\langle r \rangle  \langle \ev{K} t -r \rangle^{1/2} \sproj{K} \grad \stwiddleoops{c}{\varsigma} \Udot} \norm{\stwiddleoops{b}{\beta} \Udot} \cdot \norm{\stwiddleoops{a}{\alpha} \Udot} \notag
\\
\lesssim & \langle t \rangle ^{-3/2} (\E{c+|\varsigma| +3}{c}^{1/2}[U] + \Chi{c+|\varsigma|+ 4}{c}) \E{\mu}{\theta} [U] \notag
\\
\lesssim & \langle t \rangle ^{-3/2} \E{\kappa}{\theta+1}^{1/2}[U]  \E{\mu}{\theta} [U] \notag
\end{align}
after using (\ref{sob5}),(\ref{bootlem3}), and $c+|\varsigma|+5 \leq \kappa$.

The remaining term, (N1), is
$$\int \gamma  \langle \langle Q^l(\sproj{+} \omega_l \deer \stwiddleoops{c}{\varsigma} \Udot, \sproj{+} \stwiddleoops{b}{\beta} \Udot), \sproj{+} \stwiddleoops{a}{\alpha} \Udot \rangle \rangle.$$
Since each term in (\ref{enest16}) has a different $Q$, we will carefully estimate one of these terms and leave out the details for the remaining four terms.  The sum from the second line of (\ref{enest16}) has a quadratic form which gives the formula
\begin{align}
\int \gamma  \langle \langle& Q^l(\sproj{+} \omega_l \deer  \stwiddleoops{c}{\varsigma} \Udot, \sproj{+} \stwiddleoops{b}{\beta} \Udot), \sproj{+} \stwiddleoops{a}{\alpha} \Udot \rangle \rangle \label{enest19}
\\
=& \int \gamma [D^k_n \ahat{lm}{pj}](I) \omega_l ( \soops{b}{\beta} \vdot +  \soops{b}{\beta} \Hdot \omega)^k \omega_n \notag
\\
& \times  (\stwiddleoops{a}{\alpha} \vdot + \stwiddleoops{a}{\alpha} \Hdot \omega) ^p   (\deer \stwiddleoops{c}{\varsigma}\vdot + \deer \stwiddleoops{c}{\varsigma}\Hdot \omega)^j \omega_m \notag
\\
=& \int \gamma [D^k_n \ahat{lm}{pj}](I) \omega_l \omega_m \omega_n [(\Pee{1}+\Pee{2})( \soops{b}{\beta} \vdot +  \soops{b}{\beta} \Hdot \omega)]^k \notag
\\
&\times  [(\Pee{1}+\Pee{2})(\stwiddleoops{a}{\alpha} \vdot + \stwiddleoops{a}{\alpha} \Hdot \omega)] ^p \notag
\\
&\times [(\Pee{1}+\Pee{2})(\deer \stwiddleoops{c}{\varsigma}\vdot + \deer \stwiddleoops{c}{\varsigma}\Hdot \omega)]^j \notag
\end{align}
after decomposing using the formula (\ref{pplus}) and using our projections $\Pee{1}$ and $\Pee{2}$.  This product yields eight terms, one of which is
\begin{align}
\int \gamma & [D^k_n \ahat{lm}{pj}](I) \omega_l [\Pee{1}( \soops{b}{\beta} \vdot +  \soops{b}{\beta} \Hdot \omega)]^k \notag
\\
&\times  [\Pee{1}(\stwiddleoops{a}{\alpha} \vdot + \stwiddleoops{a}{\alpha} \Hdot \omega)] ^p  \omega_m
\notag
\\
&\times [\Pee{1}(\deer \stwiddleoops{c}{\varsigma}\vdot + \deer \stwiddleoops{c}{\varsigma}\Hdot \omega)]^j \notag
\\
=& ~0 \notag
\end{align}
by condition (\ref{ncproj}).  The remaining seven terms all have at least one factor of $\Pee{2}$, so each is bounded by
$$\int \gamma |\omega \cdot (\stwiddleoops{a}{\alpha} \vdot + \stwiddleoops{a}{\alpha} \Hdot \omega)|\cdot|\stwiddleoops{b}{\beta} \Udot|\cdot |\stwiddleoops{c}{\varsigma}\Udot|$$
where $a+|\alpha|, b+|\beta|,c+|\varsigma|\leq \mu$ are generic exponents.  Here we used the fact that the number of derivatives on each term in (\ref{enest19}) is less than $\mu$ and $|\deer f| \leq |\grad f| \leq |\Upsilon f|$ for general $f$.  Using (\ref{sob7}) and (\ref{sob8}), we have the bound
\begin{align}
\int \gamma& |\omega \cdot (\stwiddleoops{a}{\alpha} \vdot + \stwiddleoops{a}{\alpha} \Hdot \omega)|\cdot|\stwiddleoops{b}{\beta} \Udot|\cdot |\stwiddleoops{c}{\varsigma}\Udot| \label{enest20}
\\
\lesssim & \langle t \rangle ^{-3/2} (\linfnorm{r^{3/2} \omega \cdot (\stwiddleoops{a}{\alpha} \vdot)} + \linfnorm{r^{3/2} \omega \cdot (\stwiddleoops{a}{\alpha} \Hdot \omega)}) \E{\mu}{\theta} [U] \notag
\\
\lesssim & \langle t \rangle ^{-3/2} (\E{a+|\alpha|+2}{a}^{1/2}[U] + \sum_{|\beta|\leq 1} \norm{r \stwiddleoops{a}{\alpha+\beta} M^H(\Hdot)})\E{\mu}{\theta} [U] \notag
\end{align}
Pausing momentarily, we recall the definition of $M^H$as in (\ref{hdoteqc2b}) and use Lemma \ref{ahatlemma}, (\ref{sob4}), and the smallness of $\E{\mu}{\mu}^{1/2}[U]$ to derive
\begin{align}
\sum_{|\beta|\leq 1} \norm{r \stwiddleoops{a}{\alpha+\beta} M^H(\Hdot)} \label{enest21}
\lesssim & \sum_{{|\alpha_1| + |\alpha_2|  \leq |\alpha|+2}\atop {a_1+a_2 \leq a}} \norm{\langle r \rangle |\stwiddleoops{a_1}{\alpha_1} \Udot| \cdot |\stwiddleoops{a_2}{\alpha_2} \Udot|}
\\
\lesssim & \sum_{{|\alpha_1| + |\alpha_2|  \leq |\alpha|+2}\atop {a_1+a_2 \leq a}} \linfnorm{\langle r \rangle \stwiddleoops{a_1}{\alpha_1} \Udot} \norm{\stwiddleoops{a_2}{\alpha_2} \Udot} \notag
\\
\lesssim & \E{\mu}{\theta}^{1/2} [U] \E{\kappa}{\theta}^{1/2} [U] \notag
\\
\lesssim & \E{\kappa}{\theta}^{1/2} [U]. \notag
\end{align}
Therefore,
\begin{align}
\int \gamma &|\omega \cdot (\stwiddleoops{a}{\alpha} \vdot + \stwiddleoops{a}{\alpha} \Hdot \omega)|\cdot|\stwiddleoops{b}{\beta} \Udot|\cdot |\stwiddleoops{c}{\varsigma}\Udot| \label{enest22}
\\
\lesssim &
\langle t \rangle ^{-3/2} (\E{a+|\alpha|+2}{a}^{1/2}[U] + \E{\kappa}{\theta}^{1/2} [U]) \E{\mu}{\theta} [U] \notag
\\
\lesssim &
\langle t \rangle ^{-3/2} \E{\kappa}{\theta}^{1/2} [U] \E{\mu}{\theta} [U]. \notag
\end{align}
Gathering estimates (\ref{enest14}),(\ref{enest15}),(\ref{enest18a}),(\ref{enest18}), and (\ref{enest22}), we have shown that each term in (\ref{enest13}) is bounded by
$$\langle t \rangle ^{-3/2} \E{\kappa}{\theta}^{1/2} [U] \E{\mu}{\theta} [U].$$
When we combine this result with (\ref{enest12}), we have

\begin{equation}
\deet \E{\mu}{\theta} [U] - \visc \sum _{{a+|\alpha| \leq \mu}\atop {a\leq \theta}} \int \stwiddleoops{a}{\alpha} v^i  \soops{a}{\alpha} \laplacian v^i \lesssim \langle t \rangle ^{-3/2} \E{\kappa}{\theta+1}^{1/2} [U] \E{\mu}{\theta+1} [U]. \label{enest23}
\end{equation}

\noindent \textit{Induction on $\theta$}

We will complete the proof by proving estimates of the form

\begin{equation}
\deet \E{\mu}{\mu} [U] +\frac{\visc}{2} \sum _{{a+|\alpha| \leq \mu}}  \norm{ \grad \stwiddleoops{a}{\alpha} v} ^2 \lesssim \langle t \rangle ^{-3/2} \E{\mu}{\mu} [U] \E{\kappa}{\mu+1}^{1/2}[U] \label{ge1}
\end{equation}
and
\begin{equation}
\deet \E{\kappa}{\mu+1} [U] +\frac{\visc}{2} \sum _{{a+|\alpha| \leq \kappa}\atop {a\leq \mu+1}} \norm{ \grad \stwiddleoops{a}{\alpha} v} ^2 \lesssim \langle t \rangle ^{-1} \E{\mu}{\mu}^{1/2} [U] \E{\kappa}{\mu+1}[U]. \label{ge2}
\end{equation}
via finite induction on $\theta$ using (\ref{enest11}),(\ref{enest11a}), and (\ref{enest23}).

Applying (\ref{enest23}) with $\theta = 1,2,...,\mu$ and we have
\begin{align}
\deet \E{\mu}{0} [U] - \visc \sum _{{|\alpha| \leq \mu}} \innerprod{ \Upsilon^{\alpha} v}  {\Upsilon^{\alpha} \laplacian v} &\lesssim \langle t \rangle ^{-3/2} \E{\kappa}{1}^{1/2} [U] \E{\mu}{1} [U], \label{enest24}
\\
\deet \E{\mu}{1} [U] - \visc \sum _{{a+|\alpha| \leq \mu}\atop {a\leq 1}} \innerprod{ \stwiddleoops{a}{\alpha} v} { \soops{a}{\alpha} \laplacian v} &\lesssim \langle t \rangle ^{-3/2} \E{\kappa}{2}^{1/2} [U] \E{\mu}{2} [U], \notag
\\
&. \notag
\\
&. \notag
\\
&. \notag
\\
\deet \E{\mu}{\mu} [U] - \visc \sum _{{a+|\alpha| \leq \mu}} \innerprod{ \stwiddleoops{a}{\alpha} v}  {\soops{a}{\alpha} \laplacian v} &\lesssim \langle t \rangle ^{-3/2} \E{\kappa}{\mu+1}^{1/2} [U] \E{\mu}{\mu} [U]. \notag
\end{align}
The first line may be integrated by parts giving
\begin{equation}
\deet \E{\mu}{0} [U] + \visc \sum _{{|\alpha| \leq \mu}} \norm{ \grad \Upsilon^{\alpha} v }^2 \lesssim \langle t \rangle ^{-3/2} \E{\kappa}{1}^{1/2} [U] \E{\mu}{1} [U]. \label{enest24a}
\end{equation}
In the remaining lines we calculate by brute force using the commutation formula (\ref{snlaplcommute}) and integration by parts
\begin{align}
\visc \sum _{{a+|\alpha| \leq \mu}\atop {a\leq \theta}} \innerprod{ \stwiddleoops{a}{\alpha} v}   {&\soops{a}{\alpha} \laplacian v} \label{enest25}
\\
=& \visc \sum _{{a+|\alpha| \leq \mu}\atop {a\leq \theta}} \sum_{j=0}^{a} {a\choose j} (-1)^{a-j} \int  \stwiddleoops{a}{\alpha} v^i   \laplacian \stwiddleoops{j}{\alpha} v^i \notag
\\
= & \visc \sum _{{a+|\alpha| \leq \mu}\atop {a\leq \theta}} \int \stwiddleoops{a}{\alpha} v^i   \laplacian \stwiddleoops{a}{\alpha} v^i \notag
\\
& + \visc \sum _{{a+|\alpha| \leq \mu}\atop {a\leq \theta}} \sum_{j=0}^{a-1} {a\choose j} (-1)^{a-j} \int  \stwiddleoops{a}{\alpha} v^i   \laplacian \stwiddleoops{j}{\alpha} v^i \notag
\\
= & -\visc \sum _{{a+|\alpha| \leq \mu}\atop {a\leq \theta}} \norm{ \grad \stwiddleoops{a}{\alpha} v} ^2  \notag
\\
& - \visc \sum _{{a+|\alpha| \leq \mu}\atop {a\leq \theta}} \sum_{j=0}^{a-1} {a\choose j} (-1)^{a-j} \int   \deek \stwiddleoops{a}{\alpha} v^i   \deek \stwiddleoops{j}{\alpha} v^i \notag
\\
\leq & -\visc \sum _{{a+|\alpha| \leq \mu}\atop {a\leq \theta}} \norm{ \grad \stwiddleoops{a}{\alpha} v} ^2  +\frac{\visc}{2} \sum _{{a+|\alpha| \leq \mu}\atop {a\leq \theta}} \norm{ \grad \stwiddleoops{a}{\alpha} v} ^2 \notag
\\
&+ \frac{\visc}{2} \sum _{{a+|\alpha| \leq \mu}\atop {a\leq \theta}} \sum_{j=0}^{a-1} {a\choose j} \norm{ \grad \stwiddleoops{j}{\alpha} v} ^2 \notag
\\
\leq & -\frac{\visc}{2} \sum _{{a+|\alpha| \leq \mu}\atop {a\leq \theta}} \norm{ \grad \stwiddleoops{a}{\alpha} v} ^2 \notag + \frac{\visc \theta C_\theta}{2} \sum _{{a+|\alpha| \leq \mu}\atop {a\leq \theta-1}} \norm{ \grad \stwiddleoops{a}{\alpha} v} ^2 \notag
\end{align}
where
\begin{equation} C_\theta = \max_j {a\choose j}.\end{equation}
Now we can write
\begin{align}
\deet \E{\mu}{\theta} [U] & + \frac{\visc}{2} \sum _{{a+|\alpha| \leq \mu}\atop {a\leq \theta}} \norm{ \grad \stwiddleoops{a}{\alpha} v} ^2-\frac{\visc \theta C_\theta}{2} \sum _{{a+|\alpha| \leq \mu}\atop {a\leq \theta-1}} \norm{ \grad \stwiddleoops{a}{\alpha} v} ^2  \label{enest26}
\\
&\leq C \langle t \rangle ^{-3/2} \E{\kappa}{\theta+1}^{1/2} [U] \E{\mu}{\theta+1} [U]  . \notag
\end{align}
By a similar argument, for $1\leq \theta \leq \mu$ we use (\ref{enest11}) to get
\begin{align}
\deet \E{\kappa}{\theta} [U] & + \frac{\visc}{2} \sum _{{a+|\alpha| \leq \kappa}\atop {a\leq \theta}} \norm{ \grad \stwiddleoops{a}{\alpha} v} ^2 - \frac{\visc \theta C_\theta}{2} \sum _{{a+|\alpha| \leq \kappa}\atop {a\leq \theta-1}} \norm{ \grad \stwiddleoops{a}{\alpha} v} ^2\label{enest27}
\\
&\leq C \langle t \rangle ^{-1} \E{\mu}{\mu}^{1/2} [U] \E{\kappa}{\theta+1} [U] , \notag
\end{align}
and (\ref{enest11a}) to get
\begin{align}
\deet \E{\kappa}{\mu+1} [U] & + \frac{\visc}{2} \sum _{{a+|\alpha| \leq \kappa}\atop {a\leq \mu+1}} \norm{ \grad \stwiddleoops{a}{\alpha} v} ^2 - \frac{\visc \mu C_\mu}{2} \sum _{{a+|\alpha| \leq \kappa}\atop {a\leq \mu}} \norm{ \grad \stwiddleoops{a}{\alpha} v} ^2\label{enest28}
\\
&\leq C \langle t \rangle ^{-1} \E{\mu}{\mu}^{1/2} [U] \E{\kappa}{\mu+1} [U] . \notag
\end{align}
Because the $\deet E$ terms on the left of (\ref{enest24a}),(\ref{enest26})-(\ref{enest28}) are not necessarily positive, we write each these estimates in its integrated form:
\begin{equation}
\E{\mu}{0} [U] + \visc \sum _{{|\alpha| \leq \mu}} \int_0^t \norm{ \grad \Upsilon^{\alpha} v }^2 \lesssim \int_0^t \langle \tau \rangle ^{-3/2} \E{\kappa}{1}^{1/2} [U] \E{\mu}{1} [U] d\tau , \label{enest24int}
\end{equation}
\begin{align}
\E{\mu}{\theta} [U] & + \frac{\visc}{2} \sum _{{a+|\alpha| \leq \mu}\atop {a\leq \theta}} \int_0^t\norm{ \grad \stwiddleoops{a}{\alpha} v} ^2-\frac{\visc \theta C_\theta}{2} \sum _{{a+|\alpha| \leq \mu}\atop {a\leq \theta-1}} \int_0^t \norm{ \grad \stwiddleoops{a}{\alpha} v} ^2  \label{enest26int}
\\
&\leq C \int_0^t \langle \tau \rangle ^{-3/2} \E{\kappa}{\theta+1}^{1/2} [U] \E{\mu}{\theta+1} [U] d\tau ,\notag
\end{align}
\begin{align}
\E{\kappa}{\theta} [U] & + \frac{\visc}{2} \sum _{{a+|\alpha| \leq \kappa}\atop {a\leq \theta}} \int_0^t \norm{ \grad \stwiddleoops{a}{\alpha} v} ^2 - \frac{\visc \theta C_\theta}{2} \sum _{{a+|\alpha| \leq \kappa}\atop {a\leq \theta-1}} \int_0^t \norm{ \grad \stwiddleoops{a}{\alpha} v} ^2\label{enest27int}
\\
&\leq C \int_0^t \langle \tau \rangle ^{-1} \E{\mu}{\mu}^{1/2} [U] \E{\kappa}{\theta+1} [U] d\tau, \notag
\end{align}
and
\begin{align}
\E{\kappa}{\mu+1} [U] & + \frac{\visc}{2} \sum _{{a+|\alpha| \leq \kappa}\atop {a\leq \mu+1}} \int_0^t \norm{ \grad \stwiddleoops{a}{\alpha} v} ^2 - \frac{\visc \mu C_\mu}{2} \sum _{{a+|\alpha| \leq \kappa}\atop {a\leq \mu}} \int_0^t \norm{ \grad \stwiddleoops{a}{\alpha} v} ^2\label{enest28int}
\\
&\leq C \int_0^t \langle \tau \rangle ^{-1} \E{\mu}{\mu}^{1/2} [U] \E{\kappa}{\mu+1} [U] d\tau. \notag
\end{align}
If we add (\ref{enest26int}) with $\theta =1$ to (\ref{enest24int}), we have
\begin{align}
\E{\mu}{1} [U] +& \E{\mu}{0} [U]+ \frac{\visc}{2} \sum _{{a+|\alpha| \leq \mu}\atop {a\leq 1}} \int_0^t \norm{ \grad \stwiddleoops{a}{\alpha} v} ^2 + \frac{\visc}{2} \sum _{{|\alpha| \leq \mu}} \int_0^t \norm{ \grad \Upsilon^{\alpha} v }^2 \label{enest29}
\\
&\leq C \int_0^t \langle \tau \rangle ^{-3/2} (\E{\kappa}{2}^{1/2} [U] \E{\mu}{2} [U]+ \E{\kappa}{1}^{1/2} [U] \E{\mu}{1} [U]) d\tau  \notag
\end{align}
which implies
\begin{align}
\E{\mu}{1} [U] +\E{\mu}{0} [U]+ \frac{\visc}{2} \sum _{{a+|\alpha| \leq \mu}\atop {a\leq 1}} \int_0^t \norm{ \grad \stwiddleoops{a}{\alpha} v} ^2  \label{enest30}
\lesssim \int_0^t \langle \tau \rangle ^{-3/2} \E{\kappa}{2}^{1/2} [U] \E{\mu}{2} [U] d\tau.
\end{align}
For $\theta = 2,3,...,\mu$ if we assume
\begin{align}
\E{\mu}{\theta-1} [U] + \frac{\visc}{2} \sum _{{a+|\alpha| \leq \mu}\atop {a\leq \theta-1}} \int_0^t \norm{ \grad \stwiddleoops{a}{\alpha} v} ^2  \label{enest31}
\lesssim \int_0^t \langle \tau \rangle ^{-3/2} \E{\kappa}{\theta}^{1/2} [U] \E{\mu}{\theta} [U] d\tau
\end{align}
then adding (\ref{enest26int}) multiplied by $\theta C_\theta$ to (\ref{enest31}) we have
\begin{align}
\E{\mu}{\theta} [U] + & \theta C_\theta \E{\mu}{\theta-1} [U]+ \frac{\visc}{2} \sum _{{a+|\alpha| \leq \mu}\atop {a\leq \theta}} \int_0^t \norm{ \grad \stwiddleoops{a}{\alpha} v} ^2 \label{enest32}
\\
&\leq C \int_0^t \langle \tau \rangle ^{-3/2} (\E{\kappa}{\theta+1}^{1/2} [U] \E{\mu}{\theta+1} [U]+ \theta C_\theta \E{\kappa}{\theta}^{1/2} [U] \E{\mu}{\theta} [U]) d\tau,  \notag
\end{align}
which gives us
\begin{align}
\E{\mu}{\theta} [U] + \frac{\visc}{2} \sum _{{a+|\alpha| \leq \mu}\atop {a\leq \theta}} \int_0^t \norm{ \grad \stwiddleoops{a}{\alpha} v} ^2  \label{enest33}
\lesssim \int_0^t \langle \tau \rangle ^{-3/2} \E{\kappa}{\theta+1}^{1/2} [U] \E{\mu}{\theta+1} [U]d\tau.
\end{align}
Therefore, by our inductive argument at $\theta=\mu$, we have
\begin{align}
\E{\mu}{\mu} [U] + \frac{\visc}{2} \sum _{{a+|\alpha| \leq \mu}} \int_0^t \norm{ \grad \stwiddleoops{a}{\alpha} v} ^2  \label{enest33a}
\lesssim \int_0^t \langle \tau \rangle ^{-3/2} \E{\kappa}{\mu+1}^{1/2} [U] \E{\mu}{\mu} [U]d\tau .
\end{align}

A similar inductive argument for the high energy using (\ref{enest27int}) shows
\begin{align}
\E{\kappa}{\theta} [U] + \frac{\visc}{2} \sum _{{a+|\alpha| \leq \kappa}\atop {a\leq \theta}} \int_0^t \norm{ \grad \stwiddleoops{a}{\alpha} v} ^2  \label{enest34}
\lesssim \int_0^t \langle \tau \rangle ^{-1} \E{\kappa}{\theta+1} [U] \E{\mu}{\mu} ^{1/2} [U] d\tau
\end{align}
for $\theta=0,1,2,...,\mu$. At $\theta=\mu+1$ we use (\ref{enest28int}) to get
\begin{align}
\E{\kappa}{\mu+1} [U] + \frac{\visc}{2} \sum _{{a+|\alpha| \leq \kappa}\atop {a\leq \mu+1}} \int_0^t \norm{ \grad \stwiddleoops{a}{\alpha} v} ^2  \label{enest35}
\lesssim \int_0^t \langle \tau \rangle ^{-1} \E{\kappa}{\mu+1} [U] \E{\mu}{\mu} ^{1/2} [U]d\tau.
\end{align}
Estimates (\ref{enest33a}) and (\ref{enest35}) are equivalent to estimates (\ref{ge1}) and (\ref{ge2}) which in turn imply global existence. \end{proof}

\small
\bibliographystyle{amsplain}
 \bibliography{diss}

\end{document}